\documentclass[a4paper,11pt]{scrartcl}
\usepackage[utf8]{inputenc}
\usepackage{amssymb,amsfonts,amsmath,amsthm,graphicx}
\usepackage{url,tikz,float}
\usetikzlibrary{matrix}
\usetikzlibrary{arrows,decorations.pathmorphing}

\newtheorem{theorem}{Theorem}
\newtheorem{lemma}[theorem]{Lemma}
\newtheorem{proposition}[theorem]{Proposition}
\newtheorem{corollary}[theorem]{Corollary}
\newtheorem{definition}[theorem]{Definition}
\newtheorem{conjecture}[theorem]{Conjecture}
\newtheorem{remark}[theorem]{Remark}
\newcommand{\vect}[1]{\boldsymbol{#1}}
\newcommand{\first}{\mathsf{First}}
\newcommand{\last}{\mathsf{Last}}
\newcommand{\even}{\mathsf{Even}}
\renewcommand{\int}{\mathsf{int}}
\newcommand{\mF}{\mathcal{F}}
\newcommand{\core}{\mathsf{core}}

\title{New reducible configurations for graph multicoloring with application to the experimental resolution of McDiarmid-Reed's Conjecture\\ (extended version)}
\author{Jean-Christophe Godin $^a$ \and Olivier Togni $^b$}

\begin{document}

\maketitle
\begin{center}
$^a$ Institut de Math\'ematiques de Toulon, Universit\'e de Toulon, France\\
\texttt{godinjeanchri@yahoo.fr}
\medskip

$^b$  Laboratoire LIB, Université de Bourgogne, France\\
\texttt{olivier.togni@u-bourgogne.fr}
\medskip
\end{center}

\begin{abstract}
A {\em $(a,b)$-coloring} of a graph $G$ associates to each vertex a $b$-subset of a set of $a$ colors in such a way that the color-sets of adjacent vertices are disjoint.
We define general reduction tools for $(a,b)$-coloring of graphs for $2\le a/b\le 3$. 
In particular, using necessary and sufficient conditions for the existence of a $(a,b)$-coloring of a path with prescribed color-sets on its end-vertices, more complex $(a,b)$-colorability reductions are presented. The utility of these tools is exemplified on finite triangle-free induced subgraphs of the triangular lattice for which McDiarmid-Reed's conjecture asserts that they are all $(9,4)$-colorable. Computations on millions of such graphs generated randomly show that our tools allow to find a $(9,4)$-coloring for each of them except for one specific regular shape of graphs (that can be $(9,4)$-colored by an easy ad-hoc process). We thus obtain computational evidence towards the conjecture of McDiarmid\&Reed.
\end{abstract}

\section{Introduction}
For two integers $a$ and $b$, a {\em $(a,b)$-coloring} of a graph $G$ is a mapping that associates to each vertex a set of $b$ colors from a set of $a$ colors in such a way that adjacent vertices get disjoints sets of colors. In particular, a $(a,1)$-coloring is simply a proper coloring.
Equivalently, a $(a,b)$-coloring of $G$ is a homomorphism to the Kneser graph $K_{a,b}$. This type of coloring is also in relation with fractional colorings: the fractional chromatic number of a graph $G$ can be defined as $\chi_f (G)=\min\{a/b, G \text{ is } (a,b)\text{-colorable}\}$.

Since the introductory paper of Stahl~\cite{sta76}, multicoloring attracted much research, one of the most recent results being the one of Cranston and Rabern~\cite{CR18} showing that planar graphs are $(9,2)$-colorable (without using the four color theorem).

An associated problem is weighted coloring, in which the number of colors to assign can vary from vertex to vertex. Weighted coloring has a natural application for frequency allocation in cellular networks~\cite{jan00}. In particular, since the equidistant placement of transmitters induces a triangular lattice, weighted coloring of triangular lattices has been the subject of many studies~\cite{jan00,mcd00,nara01,spa12}. In most of these works, the aim is to bound the weighted chromatic number by a constant times the weighted clique number. Multicoloring is a particular case of weighted coloring; however, the fact that a graph is $(a,b)$-colorable can be translated into a bound for weighted chromatic number in terms of weighted clique, see \cite{wit} for instance.

Any graph with at least one edge is not $(a,b)$-colorable for $a < 2b$, so it is useful to introduce $e=a-2b$, $e$ symbolizing the entropy of the $(a,b)$-coloring. Note also that a graph is $(2b,b)$-colorable if and only if it is bipartite. In this paper we concentrate on the pairs $(a,b)$ such that $2 < \frac{a}{b} \leq 3 $, thus $e \in \{1,\ldots,b\}$. It is easy to observe that if a graph is $(a,b)$-colorable, then it is also $(am,bm)$-colorable for any $m\ge 1$. Moreover, the following decomposition property also holds: for any integer $y \geq 1$, if $G$ is $(2x+1,x)$-colorable for any $x \in \{1,\ldots,y\}$, then $G$ is $(a,b)$-colorable for any $a,b$ such that $\frac{a}{b}\geq\frac{2y+1}{y}$. Hence $(2b+1,b)$-colorings are the extremal objectives for non-bipartite graphs.

For finite triangle-free induced subgraphs of the triangular lattice, called {\em hexagonal graphs} in this paper, it is easy to observe that they are $(3,1)$-colorable, and it has been proven by Havet~\cite{hav01} that such graphs are also $(5,2)$-colorable and $(7,3)$-colorable. Sudeep and Vishwanathan~\cite{Sud05} then presented a simpler  $(14,6)$-coloring algorithm and later, Sau et al.~\cite{Sau12} a simpler $(7,3)$-coloring, but using the four colors theorem as a subroutine. The list version has been considered by Aubry et al.~\cite{AGT10}. They showed that hexagonal graphs are $(5m,2m)$-choosable. Of course, as a hexagonal graph can have an induced 9-cycle, the best we can hope is to find a $(9,4)$-coloring. In 1999, McDiarmid and Reed proposed the following conjecture (initially stated in terms of weighted coloring):
\begin{conjecture}[McDiarmid-Reed~\cite{mcd00}]\label{mcdconj}
Every hexagonal graph is $(9,4)$-colorable.
\end{conjecture}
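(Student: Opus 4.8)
The plan is to argue by contradiction. Let $G$ be a counterexample to Conjecture~\ref{mcdconj} with as few vertices as possible and, subject to that, as few edges as possible. I would first strip $G$ of all easy structure, then reduce it to an essentially cubic graph drawn in the triangular lattice, and finally combine the reduction tools of this paper with a discharging argument to show that no such $G$ exists.

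\emph{Easy reductions.} The neighbourhood of any vertex of the triangular lattice induces a $6$-cycle, so in a triangle-free induced subgraph the neighbours of a vertex form an independent set of size at most $3$; hence $G$ has maximum degree at most $3$. If $G$ is disconnected or has a cut-vertex $v$, color the blocks separately; since each color-set has size $4<9$ and the symmetric group on the $9$ colors acts transitively on the $4$-subsets, one may permute colors within one block so that the two color-sets at $v$ coincide, contradicting minimality. A bridge is the special case of a length-$1$ path and is absorbed by the path-coloring criterion proved earlier in the paper. Thus $G$ is $2$-connected, so its minimum degree is at least $2$. Finally, the odd girth of a hexagonal graph is at least $9$ (this is necessary for the conjecture and follows directly from the lattice structure), so a hexagonal graph that is a single cycle is either even or of length $\ge 9$, hence $(9,4)$-colorable; so $G$ is not a cycle.

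\emph{Reduction to a cubic skeleton.} Let $P$ be a maximal path of $G$ all of whose internal vertices have degree $2$; its ends then have degree $3$. By the necessary-and-sufficient condition for extending a $(9,4)$-coloring along a path with prescribed end color-sets, the family of end-pairs that extend across $P$ depends only on the length of $P$ and stabilises once that length exceeds a small constant; so one may replace $P$ by a path of bounded length with the same extension behaviour. After doing this everywhere, $G$ becomes (up to bounded-length subdivisions) a cubic graph still drawn in the triangular lattice, and it suffices to $(9,4)$-color such graphs. The heart of the argument is then to exhibit a set of small subgraphs — a degree-$3$ vertex with its immediate surroundings, short even cycles, two degree-$3$ vertices joined by two or three short paths, and so on — each of which is \emph{reducible}: for every $(9,4)$-coloring of the rest of $G$ the coloring extends, via the path criterion together with the more complex $(a,b)$-colorability reductions of this paper. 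One then shows the set is \emph{unavoidable} in cubic hexagonal graphs of odd girth $\ge 9$ by discharging: assign charges to the vertices and faces of the planar drawing so that they sum to a negative constant (an Euler-formula weighting), redistribute charge towards the regions where the listed configurations would appear, and check that the absence of all of them would force the total to be nonnegative, a contradiction.

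The hard part is plainly the last step: assembling an unavoidable set \emph{all of whose members are reducible}. Each new reducible configuration costs a fresh and progressively more delicate path-and-gadget computation with entropy $e = 9 - 2\cdot 4 = 1$, where essentially no slack is available; and one must simultaneously keep the discharging from generating cases outside the list. The experiments reported in the abstract indicate that the present toolbox already disposes of all but a handful of graphs, so the remaining work is to enlarge the list of reducible configurations (or to sharpen the discharging) just enough to cover those last cases — and that, I expect, is where the main difficulty, and the bulk of the remaining effort, lies.
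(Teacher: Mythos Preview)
The statement you are attempting to prove is a \emph{conjecture}, and the paper does not prove it. The authors say so explicitly in Section~3.3: ``Even if we were not able to completely solve Conjecture~\ref{mcdconj}, we believe it is possible to find new $(9,4)$-reducible configurations\ldots''. What the paper actually does is build a toolbox of $(9,4)$-reducible handles ($\mF_{9,4}$ and $\mF'_{9,4}$), show that for $b\le 3$ an analogous toolbox suffices (Theorem~\ref{handle} and Corollary~\ref{corol}), and report computations suggesting the toolbox almost always works for $b=4$. They then exhibit concrete hexagonal graphs (Figure~\ref{fig:exirred}) on which none of their reductions applies. So there is no ``paper's proof'' to compare against; the gap you identify at the end is exactly the gap the paper leaves open.

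That said, your outline differs from the paper's strategy in one notable way. The paper never attempts discharging: to force an unavoidable reducible configuration it instead exploits the extremal geometry of the lattice, picking a degree-$3$ vertex ``the most on the left among those the most on the top'' and analysing the short paths emanating from it (proof of Theorem~\ref{handle}). This is closer in spirit to Havet's original argument than to a discharging proof. Your discharging idea is a reasonable alternative, but be careful with the step ``replace $P$ by a path of bounded length with the same extension behaviour\ldots still drawn in the triangular lattice'': shortening a handle generally destroys the lattice embedding, so after this reduction you are working with an abstract planar cubic graph, not a hexagonal graph, and the face structure you would discharge over is no longer the one inherited from the lattice. You can still discharge on the original embedding, but then the unavoidable set must be phrased in terms of genuine lattice configurations, which is precisely where the paper's explicit counterexamples show the current list is insufficient.

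In short: your proposal is an honest research plan, not a proof, and you correctly flag the missing step. The paper is in the same position, with a somewhat different (geometric rather than discharging) unavoidability mechanism, and it documents concrete obstructions that any completion of either approach must overcome.
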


This paper aims to define general reducible configurations for $(a,b)$-coloring when $2< a/b\le 3$, which may allow one to prove that some graphs are $(a,b)$-colorable. In particular, we will apply our reduction tools on triangle-free induced subgraphs of the triangular lattice, in order to solve Conjecture~\ref{mcdconj}.
A part of the results is based on the Ph.D. thesis of Godin~\cite{god09} that also contains reducibility results for $(a,b)$-choosability, even when $a/b\ge 3$.
Searching for reducible configurations is common when dealing with graph (multi)coloring. It is, for instance, a part of the discharging method, extensively used for proving colorability properties on planar or bounded maximum average degree graphs.
In the case $2< a/b\le 3$, reducible configurations for $(a,b)$-coloring take the form of induced paths in which interior vertices have degree 2 in the graph, called {\em handles}. In order to get sharper results, we have also to define more sophisticated handles by looking at the lengths of the induced paths that start at one or both of the end-vertices of the handle (in which cases we speak about S-handle and H-handle, respectively).

The paper is organized as follows: In Section 2, we define the different types of handles we will consider and present the general handle-reducibility results. As the associated proofs are technical and quite long, we postpone them to the last sections of the paper. In Section 3, we apply our reduction tools on finite triangle-free induced subgraphs of the triangular lattice, in order to try to solve Conjecture~\ref{mcdconj}. We also present the results of our computations, giving empirical evidence towards the conjecture and listing some possible extensions of this work and a conjecture generalizing Conjecture~\ref{mcdconj}. In Section 4, the necessary and sufficient conditions under which a path is $(a,b)$-colorable when its end-vertices are already precolored are determined. These results constitute the basic tools we will use to prove the results presented in Section 2. Section 5 is devoted to finding special $(2b+1,b)$-colorings of a path in order to prove handle-reducibility in the next section. In Section 6, we prove the reducibility results for S-handles and H-handles.

\section{Handle reductions}
\label{sec2}
The path $P_{n+1}$ of length $n$ is the graph with vertex set $\{0,\ldots,n\}$ and edge set $\{i(i+1), i=0,1,\ldots,n-1\}$.

A {\em handle} $P(n)$ of length $n$ in a graph $G$ is a path of length $n$ that is an induced subgraph of $G$ with vertices of degree two in $P(n)$ having the same degree in $G$. The {\em interior} $\int(H)$ of a handle $H$ is the set of vertices of degree 2 of $H$.

A {\em parity handle (or P-handle)} $PP(n)$ in a graph $G$ is a handle $P(n)$ with the additional property that there exists another path of length $m\le n$, of the same parity than $n$ in $G-\int(P(n))$, between the two end-vertices of $P(n)$. 

An {\em S-handle} $S(n_1,n_2,n_3)$ in a graph $G$ is a handle of length $n_1$ such that one of its end-vertices has degree 3 in $G$ and is also the end-vertex of two other handles of length $n_2$ and $n_3$.

An {\em H-handle} $H(n_1,n_2,n,n_3,n_4)$ in a graph $G$ is a handle of length $n$ such that its two end-vertices are of degree three in $G$ and one of them is also the end-vertex of two other handles of length $n_1$ and $n_2$ and the other end-vertex is also the end-vertex of two handles of length $n_3$ and $n_4$. 
Examples of handles are illustrated in Figure~\ref{handles}.

For a handle $H=S(n_1,n_2,n_3)$, the two end-vertices of the paths of length $n_2, n_3$ of $H$ are called the {\em ports} of $H$ and similarly for  $H=H(n_1,n_2,n,n_3,n_4)$, the four end-vertices of the paths of length $n_1,n_2,n,n_3,n_4$ of $H$ are called its ports.
Due to symmetry reasons, we will consider only S-handles with $n_1\ge n_2 \ge n_3$ and H-handles with $n_1\le n_2 \le n$ and $n_4\le n_3\le n$.
Note that (some of) the ports of a handle may be the same vertices (hence a handle may induce a cycle in the graph).



\begin{figure}[ht]
\begin{center}

\begin{tikzpicture}[scale=0.5]
\node at (0,3) [circle,draw=black,scale=0.8](x1){};
\node at (1,3) [circle,draw=black,fill=black,scale=0.4](x2){};
\node at (2,3) [circle,draw=black,fill=black,scale=0.4](x3){};
\node at (3,3) [circle,draw=black,fill=black,scale=0.4](x4){};
\node at (4,3) [circle,draw=black,fill=black,scale=0.4](x5){};
\node at (5,3) [circle,draw=black,scale=0.8](x6){};
\node at (2.5,2) (){$P(5)$};
\draw  (x1) -- (x2) -- (x3) -- (x4) -- (x5) -- (x6);
\draw  (5.5,3.5) -- (x6) -- (5.5,2.5);
\draw  (-.5,2.5) -- (x1) -- (-.5,3.5);

\node at (8,3) [circle,draw=black,scale=0.8](t1){};
\node at (9,3) [circle,draw=black,fill=black,scale=0.4](t2){};
\node at (10,3) [circle,draw=black,fill=black,scale=0.4](t3){};
\node at (11,3) [circle,draw=black,fill=black,scale=0.4](t4){};
\node at (12,3) [circle,draw=black,scale=0.8](t5){};
\node at (10,4.5) [circle,draw=black,fill=black,scale=0.4](s3){};
\node at (10,2) (){$PP(4)$};
\draw  (7.5,2.5) -- (t1) -- (7.5,3.5);
\draw  (t1) -- (t2) -- (t3) -- (t4) -- (t5) -- (s3) -- (t1);
\draw  (12.5,2.5) -- (t5) -- (12.5,3.5);
\draw (9.5,5) -- (s3) -- (10.5,5);

\node at (0+14.5,6+-3) [circle,draw=black,scale=0.8](x1){};
\node at (1+14.5,6+-3) [circle,draw=black,fill=black,scale=0.4](x2){};
\node at (2+14.5,6+-3) [circle,draw=black,fill=black,scale=0.4](x3){};
\node at (3+14.5,6+-3) [circle,draw=black,fill=black,scale=0.4](x4){};
\node at (4+14.5,6+-3) [circle,draw=black,scale=0.8](x6){};
\node at (0+14.5,6+-4) [circle,draw=black,fill=black,scale=0.4](y1){};
\node at (0+14.5,6+-5) [circle,draw=black,fill=black,scale=0.4](y2){};
\node at (0+14.5,6+-2) [circle,draw=black,fill=black,scale=0.4](y3){};
\node at (0+14.5,6+-1) [circle,draw=black,fill=black,scale=0.4](y4){};
\node at (0+14.5,6+0) [circle,draw=black,fill=black,scale=0.4](y5){};
\node at (2.5+14.5,6+-4) (){$S(4,3,2)$};
\draw  (x1) -- (x2) -- (x3) -- (x4) -- (x6);
\draw  (y5) -- (y4) -- (y3) -- (x1) -- (y1) -- (y2);
\draw  (-.5+14.5,6+.5) -- (y5) -- (.5+14.5,6+.5);
\draw  (-.5+14.5,6+-5.5) -- (y2) -- (.5+14.5,6+-5.5);
\draw  (4.5+14.5,6+-3.5) -- (x6) -- (4.5+14.5,6+-2.5);

\node at (8+13.5,6+-3) [circle,draw=black,scale=0.8](x1){};
\node at (9+13.5,6+-3) [circle,draw=black,fill=black,scale=0.4](x2){};
\node at (10+13.5,6+-3) [circle,draw=black,fill=black,scale=0.4](x3){};
\node at (11+13.5,6+-3) [circle,draw=black,fill=black,scale=0.4](x4){};
\node at (12+13.5,6+-3) [circle,draw=black,fill=black,scale=0.4](x5){};
\node at (13+13.5,6+-3) [circle,draw=black,scale=0.8](x6){};
\node at (13+13.5,6+-4) [circle,draw=black,fill=black,scale=0.4](y1){};
\node at (13+13.5,6+-5) [circle,draw=black,fill=black,scale=0.4](y2){};
\node at (13+13.5,6+-2) [circle,draw=black,fill=black,scale=0.4](y3){};
\node at (13+13.5,6+-1) [circle,draw=black,fill=black,scale=0.4](y4){};
\node at (13+13.5,6+0) [circle,draw=black,fill=black,scale=0.4](y5){};
\node at (8+13.5,6+-4) [circle,draw=black,fill=black,scale=0.4](z1){};
\node at (8+13.5,6+-2) [circle,draw=black,fill=black,scale=0.4](z2){};
\node at (8+13.5,6+-1) [circle,draw=black,fill=black,scale=0.4](z3){};
\node at (10.5+13.5,6+-4) (){$H(1,2,5,3,2)$};
\draw  (x1) -- (x2) -- (x3) -- (x4) -- (x5) -- (x6) -- (y1) -- (y2);
\draw  (x6) -- (y3) -- (y4) -- (y5);
\draw  (z1) -- (x1) -- (z2) -- (z3);
\draw  (12.5+13.5,6+-5.5) -- (y2) -- (13.5+13.5,6+-5.5);
\draw  (12.5+13.5,6+.5) -- (y5) -- (13.5+13.5,6+.5);
\draw  (7.5+13.5,6+-4.5) -- (z1) -- (8.5+13.5,6+-4.5);
\draw  (7.5+13.5,6+-.5) -- (z3) -- (8.5+13.5,6+-.5);

\end{tikzpicture}

\end{center}
\caption{\label{handles}Examples of handles in a graph, from left to right: handle, parity handle, S-handle and H-handle (white vertices: end-vertices of the handle).}
\end{figure}

A handle $H$ is {\em $(a,b)$-reducible} in a graph $G$ if any $(a,b)$-coloring of $G-\int(H)$ can be extended to a $(a,b)$-coloring of $G$, possibly, for S- and H-handles, by modifying the color sets of some vertices of degree 2 of $H$ (other than those of $\int(H)$). 

In order to characterize graphs of list chromatic number 2, Erdös, Rubin and Taylor~\cite{ERT79} defined the core of a graph as the graph obtained after iteratively removing vertices of degree 1.
In the same vein, for a graph $G$ and a family $\mF$ of handles in $G$, we define  $\core_{\mF}(G)$ as any (induced) subgraph obtained after successively removing vertices of degree 0 and 1 and vertices of $\int(H)$ for each handle $H\in \mF$ until no more degree 0 or 1 vertex nor handle of $\mF$ remains. 
By the definition of reducibility, we immediately have the following result:
\begin{theorem}
For any graph $G$ and any family $\mF$ of $(a,b)$-reducible handles in $G$, $$G \ (a,b)\text{-colorable} \Leftrightarrow \core_{\mF}(G) \ (a,b)\text{-colorable.}$$
\end{theorem}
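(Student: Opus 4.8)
The plan is to prove the equivalence by induction on the number $k$ of elementary steps — removal of a degree-$0$ or degree-$1$ vertex, or removal of $\int(H)$ for some handle $H\in\mF$ — used in the chosen reduction from $G$ to $\core_\mF(G)$. If $k=0$ then $\core_\mF(G)=G$ and there is nothing to prove. If $k\ge 1$, let $G'$ be the graph obtained from $G$ after performing the first step; the remaining $k-1$ steps exhibit $\core_\mF(G)$ as a core of $G'$ for the family of still-available handles, so by the induction hypothesis $G'$ is $(a,b)$-colorable iff $\core_\mF(G)$ is. Hence it suffices to establish the one-step equivalence: $G$ is $(a,b)$-colorable $\Leftrightarrow$ $G'$ is $(a,b)$-colorable.

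The forward implication is the same in all cases and requires no real work: $G'$ is always an induced subgraph of $G$, and restricting an $(a,b)$-coloring of $G$ to $V(G')$ yields an $(a,b)$-coloring of $G'$, since disjointness of the color-sets on adjacent vertices is inherited by any induced subgraph.

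For the backward implication one distinguishes the two possible types of first step. If $G'=G-v$ with $\deg_G(v)\le 1$, take any $(a,b)$-coloring of $G'$: the vertex $v$ has at most one neighbour, whose $b$-set forbids at most $b$ of the $a$ colors, and since $a>2b$ there remain strictly more than $b$ free colors, from which we choose a $b$-set for $v$; this extends the coloring to $G$. If $G'=G-\int(H)$ for a handle $H\in\mF$, then the definition of $(a,b)$-reducibility states precisely that every $(a,b)$-coloring of $G-\int(H)=G'$ extends — possibly after modifying the color-sets of some degree-$2$ vertices of $H$ — to an $(a,b)$-coloring of $G$. This closes the induction.

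The only delicate point, which I expect to be the one that must be stated carefully rather than either colorability implication, is the bookkeeping implicit in ``the family of still-available handles'' above: when the step removing $\int(H)$ is reached, one must know that $H$ is still a handle of the current graph and still $(a,b)$-reducible in it, even though earlier steps have deleted other vertices. This is exactly what the definition of $\core_\mF(G)$ demands — we only ever remove $\int(H)$ for a handle $H$ that is present and reducible at that moment — and it holds in the applications because the handles used have pairwise disjoint interiors and the pendant vertices removed are not interior to any handle yet to be processed; no additional coloring argument is needed.
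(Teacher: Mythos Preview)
Your argument is correct and is exactly the expansion the paper has in mind: the paper itself offers no proof beyond the sentence ``By the definition of reducibility, we immediately have the following result,'' and your induction on the number of elementary steps is the natural way to make that sentence precise. One very minor remark: for the degree-$\le 1$ case you need only $a\ge 2b$ (leaving at least $b$ free colors), not the strict inequality you invoke; in the paper's setting $a=2b+e$ with $e\ge 1$, so this is moot.
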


The proofs of the following results, necessitating long case analysis, are presented in Sections 4 and 6.
\begin{theorem}
\label{ThmP}
For any graph $G$ and any integers $b,e$ such that $b\ge 2$ and $e<b$, any handle $P(n)$ with $n\ge \even(2b/e)$ is $(2b+e,b)$-reducible in $G$ and any parity handle $PP(n)$ with $n\ge 2$ is $(2b+e,b)$-reducible in $G$.
\end{theorem}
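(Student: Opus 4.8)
The plan is to recast handle-reducibility as a colouring question on a precoloured path. Observe first that a $(2b+e,b)$-colouring of the path $P^n$ whose two end-vertices are precoloured with $b$-sets $A$ and $B$ is exactly a walk $A=C_0,C_1,\dots,C_n=B$ of length exactly $n$ in the Kneser graph $K_{2b+e,b}$, consecutive $C_i$ being disjoint $b$-sets. Since the interior vertices of a handle $H=P(n)$ have degree $2$ in $G$, their only neighbours are their path-neighbours; hence a $(2b+e,b)$-colouring of $G-\int(H)$ extends to $G$ if and only if the colour-sets $A,B$ assigned to the end-vertices of $H$ are joined by a walk of length exactly $n$ in $K_{2b+e,b}$. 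So it is enough to prove: if $n\ge\even(2b/e)$ then \emph{every} pair of $b$-element colour sets $A,B$ is joined by a walk of length exactly $n$ in $K_{2b+e,b}$.

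For the plain handle, I would first note that the existence of such a walk of a fixed length $\ell$ depends only on $\ell$ and $k:=|A\cap B|$, since two pairs of $b$-sets with equal intersection size differ by a permutation of the colour set; write $\Phi(\ell,k)$ for the corresponding truth value. Removing the first step of the walk yields the recursion $\Phi(\ell,k)=\bigvee_{j=\max(0,b-e-k)}^{b-k}\Phi(\ell-1,j)$: a $b$-set $C_1$ disjoint from $A$ can be chosen with $|C_1\cap B|$ equal to any value between $\max(0,b-e-k)$ (fill $C_1$ as far as possible with the $e+k$ colours outside $A\cup B$) and $b-k$ (fill $C_1$ with the $b-k$ colours of $B\setminus A$), and no value outside that range occurs. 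Starting from the base case that $\Phi(1,k)$ holds iff $k=0$, a routine induction on $\ell$ solves the recursion to: $\Phi(2m,k)$ holds iff $k\ge b-me$, and $\Phi(2m+1,k)$ holds iff $k\le me$. Now $\even(2b/e)=2\lceil b/e\rceil$, and both conditions hold for every $k\in\{0,\dots,b\}$ precisely when $me\ge b$, i.e.\ $m\ge\lceil b/e\rceil$; writing $n\ge\even(2b/e)$ as $n=2m$ or $n=2m+1$ forces $m\ge\lceil b/e\rceil$ in either parity, so $\Phi(n,k)$ holds for all $k$ and $P(n)$ is $(2b+e,b)$-reducible. (These are exactly the precoloured-path facts to be established in Section~4; their sharpness reflects that the odd girth of $K_{2b+e,b}$ equals $2\lceil b/e\rceil+1$, so for smaller odd $n$ one cannot join $A$ to $B=A$.)

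For the parity handle $PP(n)$, let $u,v$ be the end-vertices of $P(n)$, carrying colour-sets $A,B$ in a given $(2b+e,b)$-colouring of $G-\int(PP(n))=G-\int(P(n))$. By definition $G-\int(P(n))$ contains a second $u$--$v$ path of length $m\le n$ with $m\equiv n\pmod 2$; reading the colour-sets along it exhibits a walk of length exactly $m$ from $A$ to $B$ in $K_{2b+e,b}$. As $2b+e\ge 2b$, the set $B$ has a neighbour $B'$ in $K_{2b+e,b}$; appending to that walk the edge $BB'$ traversed back and forth $\tfrac{n-m}{2}$ times produces an $A$--$B$ walk of length exactly $n$, equivalently a $(2b+e,b)$-colouring of the path $P(n)$ agreeing with $A$ at $u$ and $B$ at $v$. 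Since the vertices of $\int(P(n))$ have degree $2$ in $G$, this extends the colouring to all of $G$, so $PP(n)$ is $(2b+e,b)$-reducible for every $n\ge 2$.

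The only substantial work sits in the precoloured-path lemma underlying the second paragraph: identifying the exact attainable range $\{\max(0,b-e-k),\dots,b-k\}$ in the one-step recursion and solving it in closed form, which is the business of Section~4. Granting that lemma, both reducibility statements follow as above; in fact the parity-handle case needs little beyond the padding trick and the single fact that $K_{2b+e,b}$ has an edge.
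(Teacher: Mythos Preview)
Your proof is correct and reaches the same characterization of precoloured-path extendability as the paper's Section~4: a precolouring of the ends of $P^n$ by $b$-sets $A,B$ extends to the whole path iff $A,B$ are $n$-compatible, which is precisely your closed form for $\Phi(n,k)$. The paper establishes this via explicit algorithmic constructions --- Propositions~8 and~9 pin down the structure of $n$-exactly-compatible colourings, Lemmas~10--11 and Theorem~12 then assemble a linear-time colouring procedure, and Corollary~13 draws the conclusion --- whereas you phrase the question as walks in the Kneser graph $K_{2b+e,b}$ and solve a one-step recursion by induction. Your route is more economical for the bare existence statement needed here; the paper's constructive route is tailored to its broader aim of producing linear-time colouring algorithms, which it exploits downstream in the computational experiments. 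For the parity handle, your padding-by-back-and-forth argument is exactly the paper's: it copies colours along the shorter $u$--$v$ path and fills the remaining $n-m$ vertices by alternating two fixed disjoint sets.
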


In order to present our reducibility results on S-handles and H-handles, we first need to define an ordering among them.
For two integer vectors $\vect{v}=(v_1,v_2,\ldots, v_k), \vect{v'}=(v'_1,v'_2,\ldots, v'_k)$ of $\mathbb{N}^k$ we consider the natural order: $\vect{v}\le \vect{v'}$ if and only if $ \forall i\in\{1,\ldots ,k\}$, $v_i\le v'_i$. Therefore, we will say that an S-handle $S(n_1,n_2,n_3)$ is {\em smaller} than (or equal to)  an S-handle $S(n'_1,n'_2,n'_3)$ if $(n'_1,n'_2,n'_3)\ge  (n_1,n_2,n_3)$  and similarly for H-handles. 


\begin{theorem}
\label{ThmS}
For any graph $G$ and any integers $b,e,k$ such that $b\ge \max\{2, e+1,k\}$, 
$S(\even(2b/e)-1,2,1)$ is a smallest $(2b+e,b)$-reducible S-handle and $S(2b-k,k,k)$ is a smallest $(2b+1,b)$-reducible S-handle in $G$.
\end{theorem}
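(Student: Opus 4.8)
The plan is to reduce everything to the precolored-path analysis of Section~4. Writing $a=2b+e$, recall from Section~4 that a path $P^n$ whose end-vertices carry prescribed $b$-sets $C_0,C_n\subseteq\{1,\dots,a\}$ can be completed to an $(a,b)$-coloring precisely when $|C_0\cap C_n|\ge b-en/2$ for $n$ even and when $|C_0\cap C_n|\le e(n-1)/2$ for $n$ odd (the latter reading $C_0\cap C_n=\emptyset$ when $n=1$); for $e=1$, Section~5 moreover furnishes such completions with additional control on the color-sets near an end-vertex. In an S-handle $H=S(n_1,n_2,n_3)$ with branch vertex $v$ and ports $p_1,p_2,p_3$, extending a given coloring of $G-\int(H)$ amounts, after the reductions built into the definition, to producing a $b$-set $C$ to be placed at (or next to) $v$ such that for each $i$ the leg of length $n_i$ between $C$ and $c(p_i)$ meets the Section~4 condition, and then filling the three legs.

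For $S(\even(2b/e)-1,2,1)$, put $\ell=\even(2b/e)$; then $\ell$ is even and $b-e\le e(\ell-2)/2\le b-1$, so the requirement ``$|C\cap c(p_1)|\le e(\ell-2)/2$'' for the odd leg of length $\ell-1$ and ``$|C\cap c(p_2)|\ge b-e$'' for the leg of length $2$ have overlapping feasible windows, while the leg of length $1$ only asks $C\cap c(p_3)=\emptyset$. I would look for $C$ inside $U=\{1,\dots,a\}\setminus c(p_3)$ (of size $b+e$), partition $U$ according to membership in $c(p_1)$ and $c(p_2)$, and choose the $b$ elements of $C$ greedily — as many elements of $c(p_2)\setminus c(p_3)$ as the bound toward $c(p_1)$ permits, then padding outside $c(p_1)$ — so that a short count produces both inequalities at once. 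The same scheme applies to $S(2b-k,k,k)$ for $a=2b+1$: all three legs then share the parity of $k$, one asks for $C$ with $|C\cap c(p_1)|\ge k/2$ and $|C\cap c(p_i)|\ge b-k/2$ $(i=2,3)$ when $k$ is even (and the reverse inequalities when $k$ is odd), and the existence count uses $k\le b$ (so $2b-k\ge k$) together with the elementary fact that three $b$-subsets of $\{1,\dots,2b+1\}$ cannot be pairwise disjoint.

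For minimality it suffices to rule out the S-handles immediately below in the product order, namely $S(\ell-1,1,1)$ and $S(\ell-2,2,1)$ (and the corresponding lower neighbours of $S(2b-k,k,k)$). For each I would produce an explicit $G$ and a coloring of $G-\int(H)$ that does not extend. For $S(\ell-1,1,1)$: take $p_1,p_2,p_3$ of low degree so their colors are free, take $|c(p_2)\cap c(p_3)|=b-e$ (which forces $c(v)$ to equal the $b$-set $\{1,\dots,a\}\setminus(c(p_2)\cup c(p_3))$), and set $c(p_1)$ equal to that forced set; then the odd leg $P(\ell-1)$ violates its overlap bound and $c(v)$ is not free to change. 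For $S(\ell-2,2,1)$: take $c(p_1)=c(p_3)$ (legal since necessarily $p_1\ne p_3$) and $c(p_2)$ disjoint from $c(p_1)$; the edge $vp_3$ forces $|c(v)\cap c(p_1)|=0$, whereas the even leg $P(\ell-2)$ requires overlap at least $b-e(\ell-2)/2>0$ with $c(p_1)$. The neighbours of $S(2b-k,k,k)$ are handled the same way, using that the legs $P(2b-k-1)$ and $P(k-1)$ are each one length short of being unconditionally $(2b+1,b)$-colorable.

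The routine part is the arithmetic that produces $C$; the genuinely delicate point, which I expect to absorb most of the work, is the degenerate configurations — coincident or color-coincident ports, short legs pinched so close together that they almost induce a triangle — where the window arguments above are too crude, and one must instead exploit how $H$ sits locally in $G$ (and, for $e=1$, the refined path-colorings of Section~5) to absorb the deficit into the long leg rather than into the branch vertex.
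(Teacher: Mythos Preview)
Your treatment of $S(\even(2b/e)-1,2,1)$ is essentially the paper's: build the branch-vertex set inside $\{1,\dots,a\}\setminus c(p_3)$, load it with enough of $c(p_2)$, and check the odd-leg bound. The minimality counterexamples are also fine.

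The genuine gap is in $S(2b-k,k,k)$. You treat $c(p_2)$ and $c(p_3)$ as arbitrary $b$-sets and appeal to an ``existence count'' plus the fact that three $b$-sets in $\{1,\dots,2b+1\}$ are not pairwise disjoint. That observation is far too weak. For even $k$ you need $|C\cap c(p_2)|\ge b-k/2$ and $|C\cap c(p_3)|\ge b-k/2$ simultaneously; if $c(p_2)$ and $c(p_3)$ were allowed to be disjoint (e.g.\ $b=4$, $k=2$, $c(p_2)=\{1,2,3,4\}$, $c(p_3)=\{5,6,7,8\}$) no $4$-set $C$ can meet both with overlap $\ge 3$. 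What rescues you is a structural fact you never state: only $\int$ of the \emph{long} leg is removed, so in $G-\int(H)$ the two short legs still form a path of length $2k$ from $p_2$ to $p_3$ through the branch vertex, and hence $c(p_2),c(p_3)$ are automatically $2k$-compatible in any coloring you are asked to extend. This constraint is what makes the problem feasible, and you must use it.

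Even granting $2k$-compatibility, ``an existence count'' does not do the job. The paper does not try to build $C$ by a single greedy choice; instead it regards the two short legs as a single precolored path $P^{2k}$ with ends $c(p_2),c(p_3)$ and invokes the Section~5 lemmas (Lemma~\ref{lemmaP4nnmilieu} for $k$ even, Lemma~\ref{lemmaP4n+2nmilieu} for $k$ odd). These lemmas produce a set $X$ of size $b+k$ with the property that \emph{any} prescribed $\lfloor k/2\rfloor$-subset of $X$ can be forced into (respectively out of) the middle vertex $h_0$. Since $|X\cap c(p_1)|\ge (b+k)+b-(2b+1)=k-1\ge \lceil k/2\rceil$, one simply picks that prescribed subset inside $c(p_1)$ (or inside $c(p_1)\cap X$) and the $(2b-k)$-compatibility with the far port follows. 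The work is entirely in those two lemmas, each a multi-case construction; your sketch gives no indication of how to obtain this kind of uniform flexibility at $h_0$, and the closing remark about ``absorbing the deficit into the long leg'' points in the wrong direction---the deficit is absorbed by the freedom in the middle of the $P^{2k}$, not in the long leg.
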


\begin{theorem}
\label{ThmH}
 For any graph $G$ and any integers $b,e$ such that $b\ge 2$ and $e<b$, 
 $H(1,2,\even(2b/e)-2,2,1)$ is a smallest $(2b+e,b)$-reducible H-handle and the following H-handles are  smallest $(2b+1,b)$-reducible H-handles in $G$:
 
 \begin{itemize}
 \item $H(2,2,2b-3,2,2)$, $H(1,2,2b-3,3,2)$, $H(1,4,2b-3,2,2)$;
 \item $H(1,2,2b-4,4,3)$, $H(1,4,2b-4,3,3)$. 
 \end{itemize}

\end{theorem}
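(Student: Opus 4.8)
The plan is to reduce, in the range $2<a/b\le 3$, the question of extending a coloring past an H-handle to a finite system of arithmetic constraints on a few intersection cardinalities, and then to solve that system. Fix $H=H(n_1,n_2,n,n_3,n_4)$ with degree-$3$ vertices $u$ (incident to the handles of lengths $n_1,n_2$) and $v$ (incident to the handles of lengths $n_3,n_4$), joined by the handle of length $n$; write $p_1,p_2$ for the ports ending the $n_1$- and $n_2$-handles and $p_3,p_4$ for those ending the $n_3$- and $n_4$-handles. A $(2b+e,b)$-coloring of $G-\int(H)$ imposes only the color sets $C_{p_1},\dots,C_{p_4}$; the sets on $u$, $v$ and on every degree-$2$ vertex of $H$ are free. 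Hence $H$ is $(2b+e,b)$-reducible in $G$ if and only if, for all $b$-subsets $C_{p_1},\dots,C_{p_4}$ of $\{1,\dots,2b+e\}$, there exist $b$-sets $C_u,C_v$ making each of the five paths $P^{n_1},P^{n_2},P^{n},P^{n_3},P^{n_4}$ colorable with its prescribed end color sets $(C_u,C_{p_1}),(C_u,C_{p_2}),(C_u,C_v),(C_v,C_{p_3}),(C_v,C_{p_4})$ --- the five extensions being independent because their interiors are pairwise disjoint.

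Next I would invoke the path-colorability criterion of Section~4: $P^\ell$ has a $(2b+e,b)$-coloring with prescribed end sets $X,Y$ precisely when an explicit condition on $\ell$, the parity of $\ell$, $e$, and $|X\cap Y|$ holds. For the short lengths occurring in the statement this reads $|C_u\cap C_{p_1}|=0$ for a $1$-handle, $|C_u\cap C_{p_2}|\ge b-e$ for a $2$-handle, and a bound of the same type together with a parity clause for $3$- and $4$-handles, while the $n$-handle forces $|C_u\cap C_v|$ into a window plus a parity condition. Let $\mathcal U$ be the set of $C_u$ satisfying the two $u$-side constraints and $\mathcal V$ the analogue for $v$. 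Using Section~5 I would show both are \emph{rich}: for every $b$-set $Z$, the set of values $|C_u\cap Z|$ as $C_u$ ranges over $\mathcal U$ is an interval, the wider the larger $e$ is and the more $\max(n_1,n_2)$ exceeds $1$. One then chooses $C_v\in\mathcal V$ first and views $u$, its two side handles, and the $n$-handle toward the now-colored $v$ as an S-handle configuration, so that a compatible $C_u\in\mathcal U$ exists --- for $n$ at least the claimed bound --- by the argument of Theorem~\ref{ThmS}, the parity clause on the $n$-handle being absorbed as parity handles are in Theorem~\ref{ThmP}, via a special path coloring from Section~5. When $e\ge 2$ the windows are wide enough that $H(1,2,\even(2b/e)-2,2,1)$ already works; when $e=1$ they are as narrow as possible and one cannot keep all four side handles of length $\le 2$ together with the shortest $n$, since a $1$-handle confines $C_u$ to a subset of the complement of $C_{p_1}$, forcing the opposite side of that end to be lengthened (to $3$ or $4$) or $n$ itself to grow by one --- exactly the trade-offs recorded in the two bulleted families. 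Verifying that each listed quintuple does make the system solvable is the bulk of the work.

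For minimality, for each listed handle $H^\star$ and each H-handle $H'$ with parameter vector $\le$ that of $H^\star$ and distinct from it, I would exhibit a graph $G'$ containing $H'$ and a $(2b+1,b)$-coloring of $G'-\int(H')$ that does not extend: the natural constructions identify ports in pairs (closing a side handle, or the $n$-handle, into a short cycle) or attach each port to a small triangle-free gadget pinning $C_{p_i}$ to a configuration for which no admissible pair $(C_u,C_v)$ solves the $n'$-path condition; these are the boundary cases of the counting above, and several can be imported from the non-reducibility examples built for S-handles. I expect the two genuine obstacles to be: (i) the simultaneous control, under the extremal hypothesis $b=e+1$ and for small $b$, of $|C_u\cap C_{p_1}|$, $|C_u\cap C_{p_2}|$, $|C_u\cap C_v|$ and their parities, which is what forces the lengthy case analysis deferred to Section~6; and (ii) proving the two bulleted families exhaustive among minimal reducible H-handles --- i.e. that every reducible H-handle dominates one of them --- rather than merely that each listed handle is minimal.
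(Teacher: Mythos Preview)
Your high-level framework---reduce extension over $H$ to the path-compatibility criterion of Section~4 applied to the five subpaths---is exactly what the paper does. The gap is in your proposed execution for the $e=1$ families. You write: ``choose $C_v\in\mathcal V$ first and view $u$, its two side handles, and the $n$-handle toward the now-colored $v$ as an S-handle configuration, so that a compatible $C_u\in\mathcal U$ exists by the argument of Theorem~\ref{ThmS}.'' This does not work, because the central length $n$ in the listed H-handles is strictly \emph{shorter} than what Theorem~\ref{ThmS} requires for the corresponding S-handle: e.g.\ $H(1,2,2b-3,3,2)$ has $n=2b-3$, while the smallest reducible S-handle with side lengths $(2,1)$ is $S(2b-1,2,1)$. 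The whole point of the H-handle gain is that freedom on \emph{both} ends lets you shorten $n$ below the S-handle threshold; fixing $C_v$ first throws that away. Your ``richness interval'' formulation is too coarse to recover this---you need to coordinate the choices on the two sides, not make them sequentially.

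The paper's mechanism is different in two ways you should note. First, Lemmas~\ref{lemmaP4}--\ref{lemmaP2n+1un} produce, for each end separately, a small explicit \emph{menu} of admissible color sets at $h_0$ (resp.\ $h_n$) with prescribed pairwise intersection sizes; the case analysis in Propositions~\ref{propositionTypeH(2,2,2b-3,2,2)} and~\ref{propositionTypeH(2,3,2b-4,3,2)} then shows that some pair drawn from the two menus is $n$-compatible. Second, Lemmas~\ref{lemmapassagedroite} and~\ref{lemmapassagegauche} give a bootstrapping step: reducibility of $H(n_1,n_2,n,n_3,n_4)$ implies reducibility of $H(n_1,n_2,n-1,n_3+1,n_4+1)$ provided one checks only the boundary case where $\varphi(v_3),\varphi(v_4)$ are $(n_3+n_4+2)$-\emph{exactly}-compatible. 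Several of the $n=2b-4$ handles are dispatched this way (e.g.\ $H(2,2,2b-4,3,3)$ reduces to the S-handle $S(2b-3,3,3)$ plus one boundary check). Two smaller points: your remark that the parity clause is ``absorbed as parity handles are in Theorem~\ref{ThmP}'' conflates two unrelated notions---the $k$-compatibility condition is a single inequality whose direction depends on the parity of $k$, not a window-plus-parity, and has nothing to do with P-handles; and for minimality no gadgets are needed---the paper simply exhibits explicit port color sets $C(v_1),\dots,C(v_4)$ for which no admissible $(C_{h_0},C_{h_n})$ exists.
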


Remark that the above Theorem may be completed with the handle $H(2,2,2b-4,3,2)$ that seems to be the smallest $(2b+1,b)$-reducible H-handle with $n_1=2$ and $n=2b-4$. Its reducibility was checked by computer for $b=4$ but a general proof for any $b$ seems to necessitate a very long case analysis and, as the case $b=4$ is sufficient for Conjecture~\ref{mcdconj}, we decided to skip this proof.
Remark also that we do not try to go below $2b-4$ for the third parameter of a H-handle since these results will be mainly used for the case $(a,b)=(9,4)$ in Section 3, and for these values, we have $2b-4=4$. Hence, going below 4 will result in a H-handle with $n\le n_3$ (i.e., the handle is not 'centered' on the longest path). Nevertheless, similar results can be obtained for lower values of $n$ (and greater values of $b$). For instance, we have found by computer these complementary reducible H-handles for $(a,b)=(9,4)$:

\begin{proposition}\label{PropH}
 For any graph $G$, the H-handles $H(2,2,4,3,2)$, $H(2,3,3,4,3)$ and $H(2,4,3,3,3)$ are all three $(9,4)$-reducible in $G$.
\end{proposition}
\begin{proof}
 By computer or straightforward (but long) case analysis.
\end{proof}

It can be shown (but the proof is tedious) that for the above handles, the core is in fact unique, i.e., whatever the order of the reductions made, the process will end with the same graph. 
\begin{remark}[Unicity of the core]
For any graph $G$ and the family $\mF$ of all handles from Theorems~\ref{ThmP},~\ref{ThmS},~\ref{ThmH} for fixed $b\ge 2$ and $e< b$ in $G$, $\core_{\mF}(G)$ is unique.
\end{remark}

\section{Multicoloring triangle-free induced subgraphs of the triangular lattice}
A finite triangle-free induced subgraph of the triangular grid will be called a {\em hexagonal graph}.

In a similar way to the method used by Havet~\cite{hav01}, i.e., starting from a degree 3 vertex in a 'corner' of the graph and exploring the configurations around it to prove that a handle from $\mF$ is present, we can prove the following:
\begin{theorem}
label{handle}
Let $G$ be a hexagonal graph. For each of the following three families of handles we have $\core_{\mF}(G)=\emptyset$:

\begin{enumerate}
\item $\mF=\{P(2)\}$; 
\item $\mF=\{P(4), PP(3)\}$;
\item $\mF=\{P(6), PP(3), PP(4), PP(5), S(5,2,1) , H(1,2,4,2,1)\}$.
\end{enumerate}
\end{theorem}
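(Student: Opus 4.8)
The plan is to argue, separately for each of the three families $\mF$, that a hexagonal graph $G$ (a finite triangle-free induced subgraph of the triangular lattice) whose vertices all have degree $\ge 2$ and which contains none of the handles in $\mF$ as an induced substructure must be empty. Combined with the fact that removing degree-0 and degree-1 vertices repeatedly cannot get ``stuck'' on a nonempty graph unless every remaining vertex has degree $\ge 2$, this gives $\core_{\mF}(G)=\emptyset$. So the real content is: in the triangular lattice, every finite nonempty subgraph with minimum degree $2$ that is triangle-free contains a short handle of the required type.

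First I would set up the local geometry. Embed $G$ in the triangular lattice and, since $G$ is finite, pick an extremal vertex $v$ — say one minimizing some generic linear functional, so $v$ sits in a ``corner.'' In the triangular lattice each vertex has six neighbors arranged hexagonally; triangle-freeness forbids two neighbors of $v$ that are themselves adjacent in the lattice, so the neighbors of $v$ present in $G$ form an independent set among the six lattice directions, and extremality forces $v$ to have at most $3$ neighbors in $G$, in fact a restricted set of angular patterns. This is exactly Havet's starting configuration. For part 1, $\mF=\{P(2)\}$: a $P(2)$ is just a path $x-w-y$ with $w$ of degree $2$; if $G$ has minimum degree $\ge 2$ and no $P(2)$-handle, then no vertex of degree exactly $2$ exists whose two neighbors are nonadjacent — but in a triangle-free graph \emph{every} degree-2 vertex has nonadjacent neighbors, so $G$ has minimum degree $\ge 3$, which is impossible for a finite induced subgraph of the triangular lattice (the extremal vertex $v$ has degree $\le 2$, or more carefully $\le 3$ but a finer extremal choice gives $\le 2$; one checks the corner cases directly). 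Parts 2 and 3 refine this: now degree-2 vertices are allowed, but long enough induced paths of degree-2 vertices (and parity handles, S-handles, H-handles) are forbidden, so one must trace the ``boundary'' of $G$ starting from the corner vertex $v$ and show that within a bounded number of steps one is forced to close up a short handle. The argument is a finite case analysis on the local neighborhood: from $v$, follow the induced path of degree-2 vertices along the boundary; at each branching (degree-3) vertex, the triangular-lattice geometry and triangle-freeness sharply limit how the boundary can continue, and a $PP$-handle appears whenever the boundary path is short-circuited by the lattice structure (short faces), while $S$- and $H$-handles appear at configurations of nearby degree-3 vertices.

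The key steps, in order, are: (i) reduce to minimum degree $\ge 2$ and fix an extremal/corner vertex $v$ with its constrained local picture; (ii) for family 1, observe triangle-freeness makes every degree-2 vertex the interior of a $P(2)$, forcing $\delta(G)\ge 3$, contradiction; (iii) for families 2 and 3, walk along the boundary cycle through degree-2 vertices from $v$ and enumerate, using the lattice geometry, the possible placements of the next degree-$\ge 3$ vertex; (iv) show each such placement either immediately exhibits one of the listed handles or reduces to a strictly smaller/closer-to-corner configuration, so the process terminates by finiteness; (v) conclude every vertex gets removed, i.e. $\core_{\mF}(G)=\emptyset$. One should also note that once any handle is removed, the resulting graph is again a hexagonal graph (an induced subgraph of the triangular lattice) with the degree condition on former degree-2 vertices preserved, so the inductive/iterative structure is legitimate.

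The main obstacle I expect is step (iii)--(iv) for family 3: the handles $S(5,2,1)$ and $H(1,2,4,2,1)$ are quite specific, so one must verify that \emph{no} triangle-free corner configuration in the triangular lattice can avoid \emph{all} of $P(6)$, the parity handles $PP(3),PP(4),PP(5)$, and these two particular S- and H-handles simultaneously. This is a genuine (if mechanical) case explosion: the boundary can bend left or right at each step, degree-3 vertices can occur in several relative positions, and one must chase each branch until a handle closes. The paper's own phrasing (``in a similar way than the method used by Havet'', ``we can prove'') signals that they regard this as a finite check rather than a deep argument, so the proposal is to organize it as: classify corner types $\to$ for each, follow the forced boundary $\to$ in every branch locate a handle within at most $6$ edges. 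The parity handles are the linchpin that keeps the case count finite: any time the boundary path from $v$ would exceed length $5$, the hexagonal faces of the lattice supply a short parity-equal detour, producing a $PP(n)$ with $n\le 5$, which is why the bound ``$6$'' on $P$ and ``$\le 5$'' on $PP$ in family 3 (and ``$4$''/``$3$'' in family 2) are exactly what is needed.
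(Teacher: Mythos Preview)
Your outline is correct and follows essentially the same Havet-style approach as the paper's sketch. The paper's organization is a bit crisper than your boundary-walk description: it selects an extremal \emph{node} $x$ (the leftmost among the topmost degree-$3$ vertices, rather than an arbitrary extremal vertex), considers the induced UR-path $P$ from $x$ to the next node $y$, and branches purely on the length of $P$---length $3$ forces a hexagon and hence $PP(3)$; length $4$ yields either $H(1,2,4,2,1)$ or one of three $PP(4)$ configurations; length $5$ yields $S(5,2,1)$ or one of seven parity-handle configurations; and length $\ge 6$ gives $P(6)$ directly.
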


This theorem along with Theorems~\ref{ThmP}, \ref{ThmS}, \ref{ThmH} allow to prove the known results~\cite{hav01} for $(a,b)$-colorability of hexagonal graphs for $b=1, 2$ and 3 in a unified way:
\begin{corollary}\label{corol}
Any hexagonal graph is $(3,1)$-colorable, $(5,2)$-colorable and $(7,3)$-colorable.
\end{corollary}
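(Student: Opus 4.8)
The plan is to apply Theorem~\ref{handle} together with the reducibility results of Theorems~\ref{ThmP}, \ref{ThmS}, and \ref{ThmH}, specialized to $b=1,2,3$ (with $e=1$ in each case, so that $a=2b+1 \in \{3,5,7\}$). For each value of $b$, I would first check that the relevant handles appearing in one of the three families of Theorem~\ref{handle} are indeed $(2b+1,b)$-reducible in any hexagonal graph, and then invoke Theorem~1 (the equivalence $G$ $(a,b)$-colorable $\Leftrightarrow$ $\core_{\mF}(G)$ $(a,b)$-colorable) to conclude: since $\core_{\mF}(G)=\emptyset$ and the empty graph is trivially $(a,b)$-colorable, $G$ is $(a,b)$-colorable.

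First, for $b=1$: take $\mF=\{P(2)\}$. By Theorem~\ref{ThmP} with $b=1$, $e=1$, we have $\even(2b/e)=\even(2)=2$, so $P(n)$ is $(3,1)$-reducible for all $n\ge 2$; in particular $P(2)$ is $(3,1)$-reducible. By Theorem~\ref{handle}(1), $\core_{\{P(2)\}}(G)=\emptyset$, hence $G$ is $(3,1)$-colorable. (This of course just recovers the trivial fact that triangle-free subgraphs of the triangular lattice are $3$-colorable.) Second, for $b=2$: take $\mF=\{P(4), PP(3)\}$. By Theorem~\ref{ThmP} with $b=2$, $e=1$, $\even(2b/e)=\even(4)=4$, so $P(4)$ is $(5,2)$-reducible, and every parity handle $PP(n)$ with $n\ge 2$ is $(5,2)$-reducible, so in particular $PP(3)$ is. By Theorem~\ref{handle}(2), $\core_{\mF}(G)=\emptyset$, so $G$ is $(5,2)$-colorable. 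Third, for $b=3$: take $\mF=\{P(6), PP(3), PP(4), PP(5), S(5,2,1), H(1,2,4,2,1)\}$. Here $\even(2b/e)=\even(6)=6$, so $P(6)$ is $(7,3)$-reducible by Theorem~\ref{ThmP}, and $PP(3),PP(4),PP(5)$ are $(7,3)$-reducible by the same theorem. For the $S$-handle, Theorem~\ref{ThmS} with $b=3,e=1$ states that $S(\even(2b/e)-1,2,1)=S(5,2,1)$ is a smallest $(7,3)$-reducible S-handle, so $S(5,2,1)$ is $(7,3)$-reducible (here $b=3\ge\max\{2,e+1,k\}$ is satisfied). For the $H$-handle, Theorem~\ref{ThmH} with $b=3,e=1$ gives that $H(1,2,\even(2b/e)-2,2,1)=H(1,2,4,2,1)$ is a smallest $(7,3)$-reducible H-handle, so it is $(7,3)$-reducible. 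By Theorem~\ref{handle}(3), $\core_{\mF}(G)=\emptyset$, hence $G$ is $(7,3)$-colorable.

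Since each family consists entirely of $(2b+1,b)$-reducible handles, Theorem~1 applies in each case and the corollary follows. The only point requiring care is the bookkeeping of the parameters: one must verify in each case that the side conditions of Theorems~\ref{ThmP}, \ref{ThmS}, \ref{ThmH} ($b\ge 2$, $e<b$, and for the S-handle $b\ge\max\{2,e+1,k\}$) hold — they do, with $b\in\{2,3\}$, $e=1$, and $k$ appropriately small — and that the handle parameters match the expressions $\even(2b/e)$, $\even(2b/e)-1$, $\even(2b/e)-2$ exactly, which they do. There is no real obstacle here: the corollary is essentially a dictionary lookup into the general theorems, and its inclusion merely demonstrates that the unified framework reproduces Havet's classical results as special cases.
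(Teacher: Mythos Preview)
Your proposal is correct and follows exactly the approach the paper indicates (the paper gives no detailed proof, merely stating that the corollary follows from Theorem~\ref{handle} together with Theorems~\ref{ThmP}, \ref{ThmS}, \ref{ThmH}). One small technical slip: Theorem~\ref{ThmP} as stated requires $b\ge 2$ and $e<b$, so it cannot be invoked for $b=e=1$; you tacitly acknowledge this when you say the side conditions hold ``with $b\in\{2,3\}$'', yet you still cite Theorem~\ref{ThmP} for the $(3,1)$ case. The fix is immediate: either appeal directly to Corollary~\ref{corollarytheoremreciproquealgorithmnsupeven2bsure} (which has no such restriction and gives $(3,1)$-reducibility of $P(2)$ since $\even(2)=2$), or simply invoke the trivial $3$-colorability of the triangular lattice, as you already do parenthetically.
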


Note that $(7,3)$-colorability is enough as it implies $(5,2)$-colorability and $(3,1)$-colorability.

Now, we turn our attention to $(9,4)$-colorings.

Consider the set of handles $\mF_{9,4}=\mathcal{P}\cup \mathcal{S}\cup \mathcal{H}$, with\\
$\mathcal{P}= \{P(8), PP(3), PP(4), PP(5), PP(6), PP(7)\}$,\\ $\mathcal{S}= \{ S(7,2,1) , S(6,2,2), S(5,3,3), S(4,4,4)\}$, and\\  $\mathcal{H}=\{H(1,2,6,2,1), H(1,2,5,3,2),H(1,4,5,2,2), H(2,2,5,2,2), H(2,2,4,3,2)$,\\ $H(1,2,4,4,3), H(1,4,4,3,3), H(2,3,3,4,3), H(2,4,3,3,3)\}$.

\begin{figure}[ht]
\centering
\includegraphics[width=4cm]{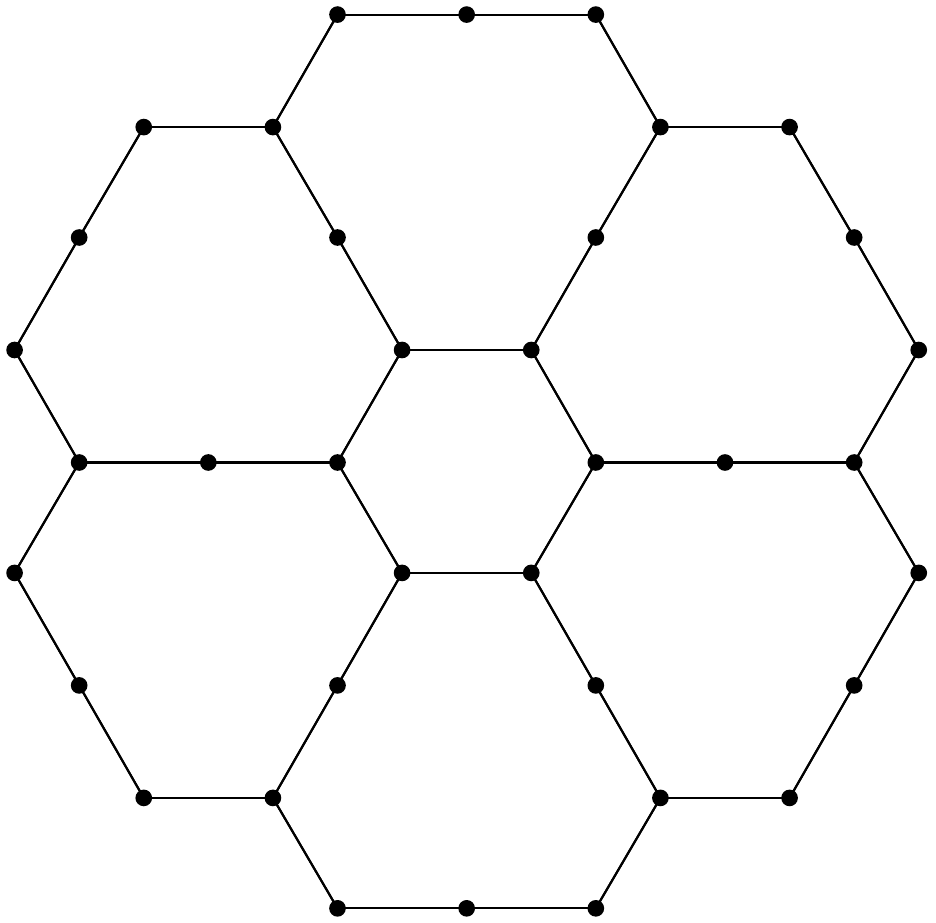}\hspace*{0.8cm}
\includegraphics[width=3.5cm]{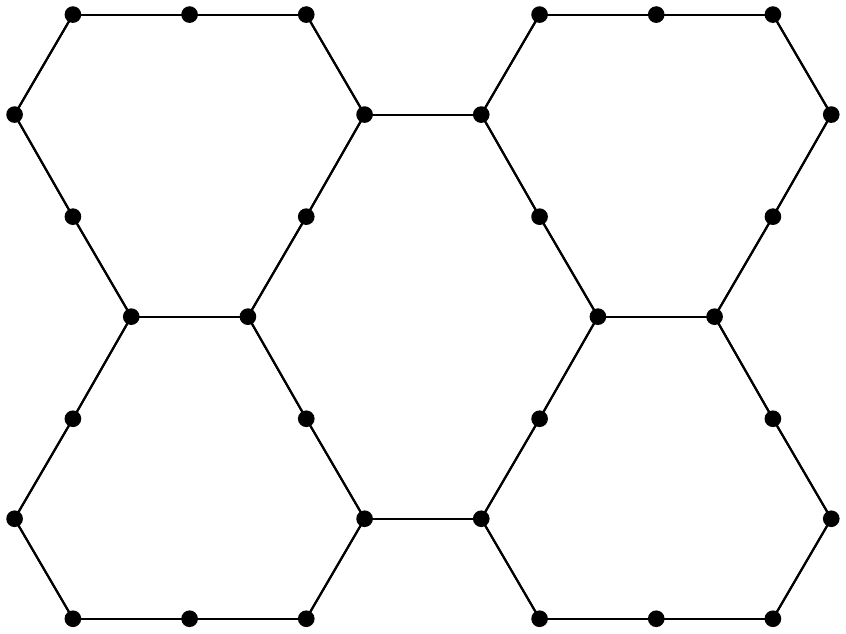}\hspace*{0.8cm}
\includegraphics[width=4cm]{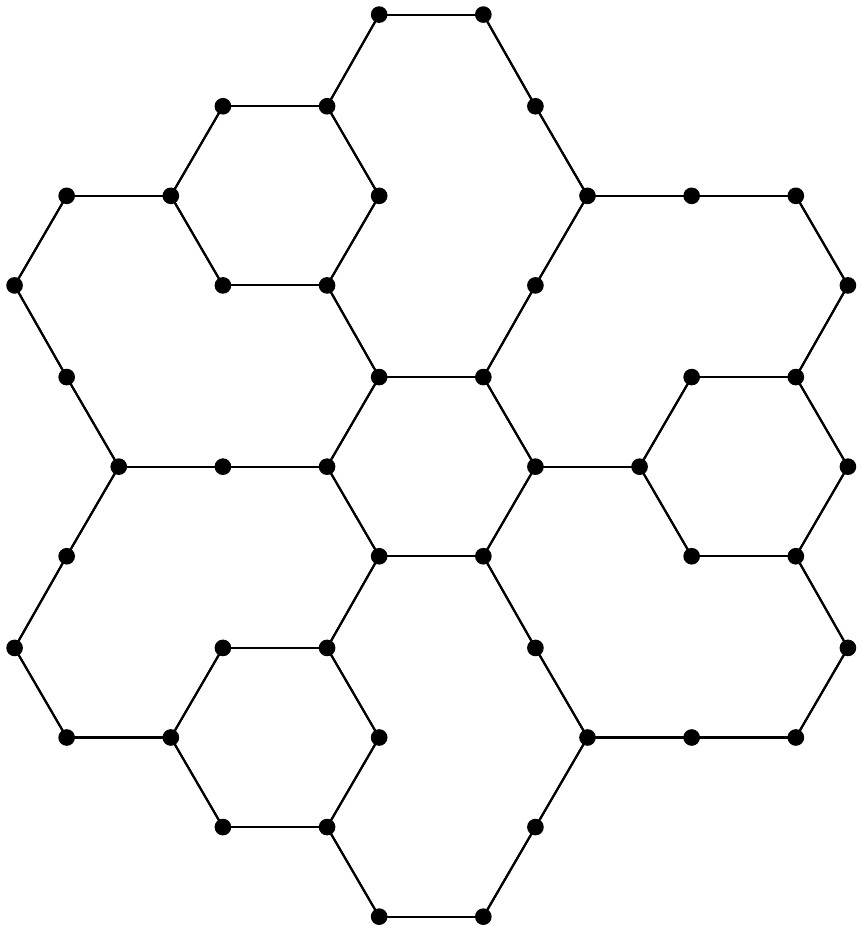}
\caption{\label{counter}Three hexagonal graphs that contain no handle from $\mathcal{P}\cup\mathcal{S}$.}
\end{figure}

Note that, by Theorems~\ref{ThmP}, \ref{ThmS}, \ref{ThmH} and Proposition~\ref{PropH}, the handles of $\mF_{9,4}$ are all $(9,4)$-reducible in any graph. Moreover, all the handles of $\mF_{9,4}$ are necessary to reduce hexagonal graphs since we have examples of hexagonal graphs for which the core is not empty if we remove one of the handles from $\mF_{9,4}$. Figure~\ref{counter} shows three examples of hexagonal graphs that only possess handles from $\mathcal{H}$ (the graph on the left contains only handles $H(2,4,4,4,2)$ (hence also $H(2,2,4,3,2)$), the one on the center only handles $H(1,2,6,2,1)$ and  the one on the right only handles $H(2,2,4,4,2)$.

\subsection{More $(9,4)$-reducible handles}
In order to (try to) prove Conjecture~\ref{mcdconj}, the computational experiments we made lead us to find a set $\mF'_{9,4}$ of more sophisticated $(9,4)$-reducible configurations that we call {\em Cyclic H-handles}. The 25 configurations of $\mF'_{9,4}$ are presented in Figure~\ref{fig:newhandles}. These configurations are in fact H-handles with some additional constraints (other paths of prescribed length between some ports). Note that each of these configurations is reduced by removing the interior of the central path (between big white vertices). Note also that sometimes the distance between two ports along this central path may be lower than 8, meaning that if the central path is removed, there will be no more constraints on the 4-color sets of the two ports (like for the top left configuration of Figure~\ref{fig:newhandles} where the bottom left and bottom right ports are at distance 7). But in this case, it can be observed that there is always a second path between the two ports of the same parity and with lower than or equal length than the one going through the central path.

\begin{figure}[H]
\centering
\def\ax{0}\def\ay{24}
\def\bx{10}\def\by{24}
\def\cx{20}\def\cy{24}
\def\lx{30}\def\ly{24}
\def\dx{0}\def\dy{18}
\def\ex{10}\def\ey{18}
\def\fx{20}\def\fy{18}
\def\gx{30}\def\gy{18}
\def\hx{0}\def\hy{12}
\def\ix{10}\def\iy{12}
\def\jx{20}\def\jy{12}
\def\kx{30}\def\ky{12}
\def\wx{0}\def\wy{6}
\def\nx{10}\def\ny{6}
\def\ox{20}\def\oy{6}
\def\px{30}\def\py{6}
\def\qx{0}\def\qy{0}
\def\rx{10}\def\ry{0}
\def\sx{20}\def\sy{0}
\def\tx{30}\def\ty{0}
\def\ux{0}\def\uy{-6}
\def\vx{10}\def\vy{-6}
\def\xx{20}\def\xy{-6}
\def\yx{30}\def\yy{-6}
\def\zx{0}\def\zy{-12}
\begin{tikzpicture}[scale=0.4]
\node at (\ax+2,\ay) [circle,draw=black,scale=0.8](t1){};
\node at (\ax+5,\ay) [circle,draw=black,scale=0.8](t2){};
\node at (\ax+.6,\ay+1) [circle,draw=black,fill=black,scale=0.5](h1){};
\node at (\ax+2,\ay-1.5) [circle,draw=black,fill=black,scale=0.5](h2){};
\node at (\ax+6.4,\ay+1) [circle,draw=black,fill=black,scale=0.5](h3){};
\node at (\ax+5,\ay-1.5) [circle,draw=black,fill=black,scale=0.5](h4){};
\node at (\ax+6.4,\ay-2.5) [circle,draw=black,fill=black,scale=0.5](h5){};
\node at (\ax+.6,\ay-2.5) [circle,draw=black,fill=black,scale=0.5](h0){};
\draw  (t1) -- (h1) node[midway, above] {\small 2};
\draw  (h2)-- (t1) node[midway, left] {\small 1};
\draw [ultra thick] (t1) -- (t2) node[midway, above] {\small 2};
\draw  (t2) -- (h3) node[midway, above] {\small 4};
\draw  (t2) -- (h4) node[midway, right] {\small 1};
\draw [thick, style=dashed]  (h2) -- (h4)  node[midway, below] {\small 2};
\draw  (h2)-- (h0) node[midway, below] {\small 1};
\draw  (h5) -- (h4) node[midway, below] {\small 1};

\node at (\bx+2,\by) [circle,draw=black,scale=0.8](t1){};
\node at (\bx+5,\by) [circle,draw=black,scale=0.8](t2){};
\node at (\bx+.6,\by+1) [circle,draw=black,fill=black,scale=0.5](h1){};
\node at (\bx+2,\by-1.5) [circle,draw=black,fill=black,scale=0.5](h2){};
\node at (\bx+6.4,\by+1) [circle,draw=black,fill=black,scale=0.5](h3){};
\node at (\bx+5,\by-1.5) [circle,draw=black,fill=black,scale=0.5](h4){};
\node at (\bx+6.4,\by-2.5) [circle,draw=black,fill=black,scale=0.5](h5){};
\node at (\bx+.6,\by-2.5) [circle,draw=black,fill=black,scale=0.5](h0){};
\draw  (t1) -- (h1) node[midway, above] {\small 3};
\draw  (h2)-- (t1) node[midway, left] {\small 2};
\draw  (h2)-- (h0) node[midway, below] {\small 1};
\draw [ultra thick] (t1) -- (t2) node[midway, above] {\small 2};
\draw  (t2) -- (h3) node[midway, above] {\small 3};
\draw  (h5) -- (h4) node[midway, below] {\small 1};
\draw  (t2) -- (h4) node[midway, right] {\small 2};
\draw [thick, style=dashed]  (h2) -- (h4)  node[midway, below] {\small 2,4 or 6};

\node at (\cx+2,\cy) [circle,draw=black,scale=0.8](t1){};
\node at (\cx+5,\cy) [circle,draw=black,scale=0.8](t2){};
\node at (\cx+.6,\cy+1) [circle,draw=black,fill=black,scale=0.5](h1){};
\node at (\cx+2,\cy-1.5) [circle,draw=black,fill=black,scale=0.5](h2){};
\node at (\cx+6.4,\cy+1) [circle,draw=black,fill=black,scale=0.5](h3){};
\node at (\cx+5,\cy-1.5) [circle,draw=black,fill=black,scale=0.5](h4){};
\node at (\cx+6.4,\cy-2.5) [circle,draw=black,fill=black,scale=0.5](h5){};
\node at (\cx+.6,\cy-2.5) [circle,draw=black,fill=black,scale=0.5](h0){};
\draw  (t1) -- (h1) node[midway, above] {\small 2};
\draw  (h2)-- (t1) node[midway, left] {\small 2};
\draw  (h2)-- (h0) node[midway, below] {\small 1};
\draw [ultra thick] (t1) -- (t2) node[midway, above] {\small 2};
\draw  (t2) -- (h3) node[midway, above] {\small 4};
\draw  (h5) -- (h4) node[midway, below] {\small 1};
\draw  (t2) -- (h4) node[midway, right] {\small 2};
\draw [thick, style=dashed]  (h2) -- (h4)  node[midway, below] {\small 2};

\node at (\dx+2,\dy) [circle,draw=black,scale=0.8](t1){};
\node at (\dx+5,\dy) [circle,draw=black,scale=0.8](t2){};
\node at (\dx+.6,\dy+1) [circle,draw=black,fill=black,scale=0.5](h1){};
\node at (\dx+2,\dy-1.5) [circle,draw=black,fill=black,scale=0.5](h2){};
\node at (\dx+6.4,\dy+1) [circle,draw=black,fill=black,scale=0.5](h3){};
\node at (\dx+5,\dy-1.5) [circle,draw=black,fill=black,scale=0.5](h4){};
\node at (\dx+6.4,\dy-2.5) [circle,draw=black,fill=black,scale=0.5](h5){};
\node at (\dx+.6,\dy-2.5) [circle,draw=black,fill=black,scale=0.5](h0){};
\draw  (t1) -- (h1) node[midway, above] {\small 5};
\draw  (h2)-- (t1) node[midway, left] {\small 1};
\draw [ultra thick] (t1) -- (t2) node[midway, above] {\small 2};
\draw  (t2) -- (h3) node[midway, above] {\small 2};
\draw  (t2) -- (h4) node[midway, right] {\small 2};
\draw [thick, style=dashed]  (h2) -- (h4)  node[midway, below] {\small 1};
\draw  (h2)-- (h0) node[midway, below] {\small 2};
\draw  (h5) -- (h4) node[midway, below] {\small 2};

\node at (\fx+2,\fy) [circle,draw=black,scale=0.8](t1){};
\node at (\fx+5,\fy) [circle,draw=black,scale=0.8](t2){};
\node at (\fx+.6,\fy+1) [circle,draw=black,fill=black,scale=0.5](h1){};
\node at (\fx+2,\fy-1.5) [circle,draw=black,fill=black,scale=0.5](h2){};
\node at (\fx+6.4,\fy+1) [circle,draw=black,fill=black,scale=0.5](h3){};
\node at (\fx+5,\fy-1.5) [circle,draw=black,fill=black,scale=0.5](h4){};
\draw  (t1) -- (h1) node[midway, above] {\small 2};
\draw  (h2)-- (t1) node[midway, right] {\small 2};
\draw [ultra thick] (t1) -- (t2) node[midway, above] {\small 3};
\draw  (t2) -- (h3) node[midway, above] {\small 2};
\draw  (t2) -- (h4) node[midway, left] {\small 2};
\draw [thick, style=dashed]  (h2) -- (h4)  node[midway, below] {\small 3};
\draw [thick, style=dashed]  (h3) .. controls+(0.4,-1.7) .. (h4)  node[midway, left] {\small 2};
\draw [thick, style=dashed]  (h1) .. controls+(-.5,-1.6) .. (h2)  node[midway, right] {\small 2};

\node at (\gx+2,\gy) [circle,draw=black,scale=0.8](t1){};
\node at (\gx+5,\gy) [circle,draw=black,scale=0.8](t2){};
\node at (\gx+.6,\gy+1) [circle,draw=black,fill=black,scale=0.5](h1){};
\node at (\gx+2,\gy-1.5) [circle,draw=black,fill=black,scale=0.5](h2){};
\node at (\gx+6.4,\gy+1) [circle,draw=black,fill=black,scale=0.5](h3){};
\node at (\gx+5,\gy-1.5) [circle,draw=black,fill=black,scale=0.5](h4){};
\node at (\gx+6.4,\gy-2.5) [circle,draw=black,fill=black,scale=0.5](h5){};
\node at (\gx+.6,\gy-2.5) [circle,draw=black,fill=black,scale=0.5](h0){};
\draw  (t1) -- (h1) node[midway, above] {\small 2};
\draw  (h2)-- (t1) node[midway, left] {\small 1};
\draw [ultra thick] (t1) -- (t2) node[midway, above] {\small 3};
\draw  (t2) -- (h3) node[midway, above] {\small 5};
\draw  (t2) -- (h4) node[midway, right] {\small 1};
\draw [thick, style=dashed]  (h2) -- (h4)  node[midway, below] {\small 3};
\draw  (h2)-- (h0) node[midway, below] {\small 1};
\draw  (h5) -- (h4) node[midway, below] {\small 2};

\node at (\ex+2,\ey) [circle,draw=black,scale=0.8](t1){};
\node at (\ex+5,\ey) [circle,draw=black,scale=0.8](t2){};
\node at (\ex+.6,\ey+1) [circle,draw=black,fill=black,scale=0.5](h1){};
\node at (\ex+2,\ey-1.5) [circle,draw=black,fill=black,scale=0.5](h2){};
\node at (\ex+6.4,\ey+1) [circle,draw=black,fill=black,scale=0.5](h3){};
\node at (\ex+5,\ey-1.5) [circle,draw=black,fill=black,scale=0.5](h4){};
\node at (\ex+6.4,\ey-2.5) [circle,draw=black,fill=black,scale=0.5](h5){};
\node at (\ex+.6,\ey-2.5) [circle,draw=black,fill=black,scale=0.5](h0){};
\draw  (t1) -- (h1) node[midway, above] {\small 3};
\draw  (h2)-- (t1) node[midway, left] {\small 1};
\draw [ultra thick] (t1) -- (t2) node[midway, above] {\small 3};
\draw  (t2) -- (h3) node[midway, above] {\small 4};
\draw  (t2) -- (h4) node[midway, right] {\small 1};
\draw [thick, style=dashed]  (h2) -- (h4)  node[midway, below] {\small 3};
\draw  (h2)-- (h0) node[midway, below] {\small 2};
\draw  (h5) -- (h4) node[midway, below] {\small 2};

\node at (\hx+2,\hy) [circle,draw=black,scale=0.8](t1){};
\node at (\hx+5,\hy) [circle,draw=black,scale=0.8](t2){};
\node at (\hx+.6,\hy+1) [circle,draw=black,fill=black,scale=0.5](h1){};
\node at (\hx+2,\hy-1.5) [circle,draw=black,fill=black,scale=0.5](h2){};
\node at (\hx+6.4,\hy+1) [circle,draw=black,fill=black,scale=0.5](h3){};
\node at (\hx+5,\hy-1.5) [circle,draw=black,fill=black,scale=0.5](h4){};
\node at (\hx+6.4,\hy-2.5) [circle,draw=black,fill=black,scale=0.5](h5){};
\node at (\hx+.6,\hy-2.5) [circle,draw=black,fill=black,scale=0.5](h0){};
\draw  (t1) -- (h1) node[midway, above] {\small 3};
\draw  (h2)-- (t1) node[midway, left] {\small 1};
\draw [ultra thick] (t1) -- (t2) node[midway, above] {\small 3};
\draw  (t2) -- (h3) node[midway, above] {\small 4};
\draw  (t2) -- (h4) node[midway, right] {\small 2};
\draw [thick, style=dashed]  (h2) -- (h4)  node[midway, below] {\small 3};
\draw  (h2)-- (h0) node[midway, below] {\small 1};
\draw  (h5) -- (h4) node[midway, below] {\small 2};

\node at (\ix+2,\iy) [circle,draw=black,scale=0.8](t1){};
\node at (\ix+5,\iy) [circle,draw=black,scale=0.8](t2){};
\node at (\ix+.6,\iy+1) [circle,draw=black,fill=black,scale=0.5](h1){};
\node at (\ix+2,\iy-1.5) [circle,draw=black,fill=black,scale=0.5](h2){};
\node at (\ix+6.4,\iy+1) [circle,draw=black,fill=black,scale=0.5](h3){};
\node at (\ix+5,\iy-1.5) [circle,draw=black,fill=black,scale=0.5](h4){};
\node at (\ix+6.4,\iy-2.5) [circle,draw=black,fill=black,scale=0.5](h5){};
\node at (\ix+.6,\iy-2.5) [circle,draw=black,fill=black,scale=0.5](h0){};
\draw  (t1) -- (h1) node[midway, above] {\small 2};
\draw  (h2)-- (t1) node[midway, left] {\small 1};
\draw [ultra thick] (t1) -- (t2) node[midway, above] {\small 3};
\draw  (t2) -- (h3) node[midway, above] {\small 4};
\draw  (t2) -- (h4) node[midway, right] {\small 3};
\draw [thick, style=dashed]  (h2) -- (h4)  node[midway, below] {\small 2};
\draw  (h2)-- (h0) node[midway, below] {\small 1};
\draw  (h5) -- (h4) node[midway, below] {\small 1};

\node at (\jx+2,\jy) [circle,draw=black,scale=0.8](t1){};
\node at (\jx+5,\jy) [circle,draw=black,scale=0.8](t2){};
\node at (\jx+.6,\jy+1) [circle,draw=black,fill=black,scale=0.5](h1){};
\node at (\jx+2,\jy-1.5) [circle,draw=black,fill=black,scale=0.5](h2){};
\node at (\jx+6.4,\jy+1) [circle,draw=black,fill=black,scale=0.5](h3){};
\node at (\jx+5,\jy-1.5) [circle,draw=black,fill=black,scale=0.5](h4){};
\node at (\jx+6.4,\jy-2.5) [circle,draw=black,fill=black,scale=0.5](h5){};
\node at (\jx+.6,\jy-2.5) [circle,draw=black,fill=black,scale=0.5](h0){};
\draw  (t1) -- (h1) node[midway, above] {\small 2};
\draw  (h2)-- (t1) node[midway, left] {\small 1};
\draw [ultra thick] (t1) -- (t2) node[midway, above] {\small 4};
\draw  (t2) -- (h3) node[midway, above] {\small 2};
\draw  (t2) -- (h4) node[midway, right] {\small 1};
\draw [thick, style=dashed]  (h2) -- (h4)  node[midway, below] {\small 4};
\draw  (h2)-- (h0) node[midway, below] {\small 1};
\draw  (h5) -- (h4) node[midway, below] {\small 1};

\node at (\kx+2,\ky) [circle,draw=black,scale=0.8](t1){};
\node at (\kx+5,\ky) [circle,draw=black,scale=0.8](t2){};
\node at (\kx+.6,\ky+1) [circle,draw=black,fill=black,scale=0.5](h1){};
\node at (\kx+2,\ky-1.5) [circle,draw=black,fill=black,scale=0.5](h2){};
\node at (\kx+6.4,\ky+1) [circle,draw=black,fill=black,scale=0.5](h3){};
\node at (\kx+5,\ky-1.5) [circle,draw=black,fill=black,scale=0.5](h4){};
\node at (\kx+.6,\ky-2.5) [circle,draw=black,fill=black,scale=0.5](h0){};
\draw  (t1) -- (h1) node[midway, above] {\small 1};
\draw  (h2)-- (t1) node[midway, right] {\small 2};
\draw [ultra thick] (t1) -- (t2) node[midway, above] {\small 4};
\draw  (t2) -- (h3) node[midway, above] {\small 2};
\draw  (t2) -- (h4) node[midway, left] {\small 2};
\draw [thick, style=dashed]  (h2) -- (h4)  node[midway, below] {\small 2};
\draw  (h2)-- (h0) node[midway, below] {\small 1};
\draw [thick, style=dashed]  (h3) .. controls+(0.4,-1.7) .. (h4)  node[midway, left] {\small 2};

\node at (\lx+2,\ly) [circle,draw=black,scale=0.8](t1){};
\node at (\lx+5,\ly) [circle,draw=black,scale=0.8](t2){};
\node at (\lx+.6,\ly+1) [circle,draw=black,fill=black,scale=0.5](h1){};
\node at (\lx+2,\ly-1.5) [circle,draw=black,fill=black,scale=0.5](h2){};
\node at (\lx+6.4,\ly+1) [circle,draw=black,fill=black,scale=0.5](h3){};
\node at (\lx+5,\ly-1.5) [circle,draw=black,fill=black,scale=0.5](h4){};
\node at (\lx+6.4,\ly-2.5) [circle,draw=black,fill=black,scale=0.5](h5){};
\node at (\lx+.6,\ly-2.5) [circle,draw=black,fill=black,scale=0.5](h0){};
\draw  (t1) -- (h1) node[midway, above] {\small 2};
\draw  (h2)-- (t1) node[midway, left] {\small 2};
\draw  (h2)-- (h0) node[midway, below] {\small 1};
\draw [ultra thick] (t1) -- (t2) node[midway, above] {\small 2};
\draw  (t2) -- (h3) node[midway, above] {\small 4};
\draw  (h5) -- (h4) node[midway, below] {\small 2};
\draw  (t2) -- (h4) node[midway, right] {\small 2};
\draw [thick, style=dashed]  (h2) -- (h4)  node[midway, below] {\small 4 or 6};

\node at (\wx+2,\wy) [circle,draw=black,scale=0.8](t1){};
\node at (\wx+5,\wy) [circle,draw=black,scale=0.8](t2){};
\node at (\wx+.6,\wy+1) [circle,draw=black,fill=black,scale=0.5](h1){};
\node at (\wx+2,\wy-1.5) [circle,draw=black,fill=black,scale=0.5](h2){};
\node at (\wx+6.4,\wy+1) [circle,draw=black,fill=black,scale=0.5](h3){};
\node at (\wx+5,\wy-1.5) [circle,draw=black,fill=black,scale=0.5](h4){};
\node at (\wx+6.4,\wy-2.5) [circle,draw=black,fill=black,scale=0.5](h5){};
\node at (\wx+.6,\wy-2.5) [circle,draw=black,fill=black,scale=0.5](h0){};
\draw  (t1) -- (h1) node[midway, above] {\small 1};
\draw  (h2)-- (t1) node[midway, left] {\small 2};
\draw [ultra thick] (t1) -- (t2) node[midway, above] {\small 4};
\draw  (t2) -- (h3) node[midway, above] {\small 3};
\draw  (t2) -- (h4) node[midway, right] {\small 2};
\draw [thick, style=dashed]  (h2) -- (h4)  node[midway, below] {\small 2,4 or 5};
\draw  (h2)-- (h0) node[midway, below] {\small 1};
\draw  (h5) -- (h4) node[midway, below] {\small 1};

\node at (\nx+2,\ny) [circle,draw=black,scale=0.8](t1){};
\node at (\nx+5,\ny) [circle,draw=black,scale=0.8](t2){};
\node at (\nx+.6,\ny+1) [circle,draw=black,fill=black,scale=0.5](h1){};
\node at (\nx+2,\ny-1.5) [circle,draw=black,fill=black,scale=0.5](h2){};
\node at (\nx+6.4,\ny+1) [circle,draw=black,fill=black,scale=0.5](h3){};
\node at (\nx+5,\ny-1.5) [circle,draw=black,fill=black,scale=0.5](h4){};
\node at (\nx+6.4,\ny-2.5) [circle,draw=black,fill=black,scale=0.5](h5){};
\node at (\nx+.6,\ny-2.5) [circle,draw=black,fill=black,scale=0.5](h0){};
\draw  (t1) -- (h1) node[midway, above] {\small 1};
\draw  (h2)-- (t1) node[midway, left] {\small 2};
\draw [ultra thick] (t1) -- (t2) node[midway, above] {\small 4};
\draw  (t2) -- (h3) node[midway, above] {\small 4};
\draw  (t2) -- (h4) node[midway, right] {\small 2};
\draw [thick, style=dashed]  (h2) -- (h4)  node[midway, below] {\small 3 or 4};
\draw  (h2)-- (h0) node[midway, below] {\small 1};
\draw  (h5) -- (h4) node[midway, below] {\small 1};

\node at (\ox+2,\oy) [circle,draw=black,scale=0.8](t1){};
\node at (\ox+5,\oy) [circle,draw=black,scale=0.8](t2){};
\node at (\ox+.6,\oy+1) [circle,draw=black,fill=black,scale=0.5](h1){};
\node at (\ox+2,\oy-1.5) [circle,draw=black,fill=black,scale=0.5](h2){};
\node at (\ox+6.4,\oy+1) [circle,draw=black,fill=black,scale=0.5](h3){};
\node at (\ox+5,\oy-1.5) [circle,draw=black,fill=black,scale=0.5](h4){};
\draw  (t1) -- (h1) node[midway, above] {\small 1};
\draw  (h2)-- (t1) node[midway, right] {\small 3};
\draw [ultra thick] (t1) -- (t2) node[midway, above] {\small 4};
\draw  (t2) -- (h3) node[midway, above] {\small 3};
\draw  (t2) -- (h4) node[midway, left] {\small 3};
\draw [thick, style=dashed]  (h3) .. controls+(0.4,-1.7) .. (h4)  node[midway, right] {\small 3 or 5};

\node at (\px+2,\py) [circle,draw=black,scale=0.8](t1){};
\node at (\px+5,\py) [circle,draw=black,scale=0.8](t2){};
\node at (\px+.6,\py+1) [circle,draw=black,fill=black,scale=0.5](h1){};
\node at (\px+3.5,\py-1) [circle,draw=black,fill=black,scale=0.5](h2){};
\node at (\px+6.4,\py+1) [circle,draw=black,fill=black,scale=0.5](h3){};
\node at (\px+3.5,\py-2.4) [circle,draw=black,fill=black,scale=0.5](h5){};
\draw  (t1) -- (h1) node[midway, above] {\small 1};
\draw  (h2)-- (t1) node[midway, below] {\small 3};
\draw [ultra thick] (t1) -- (t2) node[midway, above] {\small 4};
\draw  (t2) -- (h3) node[midway, above] {\small 3};
\draw  (t2) -- (h2) node[midway, below] {\small 2};
\draw  (h5) -- (h2) node[midway, right] {\small 1};

\node at (\qx+2,\qy) [circle,draw=black,scale=0.8](t1){};
\node at (\qx+5,\qy) [circle,draw=black,scale=0.8](t2){};
\node at (\qx+.6,\qy+1) [circle,draw=black,fill=black,scale=0.5](h1){};
\node at (\qx+2,\qy-1.5) [circle,draw=black,fill=black,scale=0.5](h2){};
\node at (\qx+6.4,\qy+1) [circle,draw=black,fill=black,scale=0.5](h3){};
\node at (\qx+5,\qy-1.5) [circle,draw=black,fill=black,scale=0.5](h4){};
\node at (\qx+6.4,\qy-2.5) [circle,draw=black,fill=black,scale=0.5](h5){};
\node at (\qx+.6,\qy-2.5) [circle,draw=black,fill=black,scale=0.5](h0){};
\draw  (t1) -- (h1) node[midway, above] {\small 2};
\draw  (h2)-- (t1) node[midway, left] {\small 2};
\draw [ultra thick] (t1) -- (t2) node[midway, above] {\small 4};
\draw  (t2) -- (h3) node[midway, above] {\small 2};
\draw  (t2) -- (h4) node[midway, right] {\small 2};
\draw [thick, style=dashed]  (h2) -- (h4)  node[midway, below] {\small 2};
\draw  (h2)-- (h0) node[midway, below] {\small 1};
\draw  (h5) -- (h4) node[midway, below] {\small 1};

\node at (\rx+2,\ry) [circle,draw=black,scale=0.8](t1){};
\node at (\rx+5,\ry) [circle,draw=black,scale=0.8](t2){};
\node at (\rx+.6,\ry+1) [circle,draw=black,fill=black,scale=0.5](h1){};
\node at (\rx+2,\ry-1.5) [circle,draw=black,fill=black,scale=0.5](h2){};
\node at (\rx+6.4,\ry+1) [circle,draw=black,fill=black,scale=0.5](h3){};
\node at (\rx+5,\ry-1.5) [circle,draw=black,fill=black,scale=0.5](h4){};
\draw  (t1) -- (h1) node[midway, above] {\small 2};
\draw  (h2)-- (t1) node[midway, right] {\small 2};
\draw [ultra thick] (t1) -- (t2) node[midway, above] {\small 4};
\draw  (t2) -- (h3) node[midway, above] {\small 2};
\draw  (t2) -- (h4) node[midway, left] {\small 2};
\draw [thick, style=dashed]  (h3) .. controls+(0.4,-1.7) .. (h4)  node[midway, left] {\small 2};
\draw [thick, style=dashed]  (h1) .. controls+(-.5,-1.6) .. (h2)  node[midway, right] {\small 2};

\node at (\sx+2,\sy) [circle,draw=black,scale=0.8](t1){};
\node at (\sx+5,\sy) [circle,draw=black,scale=0.8](t2){};
\node at (\sx+.6,\sy+1) [circle,draw=black,fill=black,scale=0.5](h1){};
\node at (\sx+2,\sy-1.5) [circle,draw=black,fill=black,scale=0.5](h2){};
\node at (\sx+6.4,\sy+1) [circle,draw=black,fill=black,scale=0.5](h3){};
\node at (\sx+5,\sy-1.5) [circle,draw=black,fill=black,scale=0.5](h4){};
\node at (\sx+6.4,\sy-2.5) [circle,draw=black,fill=black,scale=0.5](h5){};
\node at (\sx+.6,\sy-2.5) [circle,draw=black,fill=black,scale=0.5](h0){};
\draw  (t1) -- (h1) node[midway, above] {\small 1};
\draw  (h2)-- (t1) node[midway, left] {\small 2};
\draw [ultra thick] (t1) -- (t2) node[midway, above] {\small 5};
\draw  (t2) -- (h3) node[midway, above] {\small 2};
\draw  (t2) -- (h4) node[midway, right] {\small 1};
\draw [thick, style=dashed]  (h2) -- (h4)  node[midway, below] {\small 7};
\draw  (h2)-- (h0) node[midway, below] {\small 1};
\draw  (h5) -- (h4) node[midway, below] {\small 1};

\node at (\tx+2,\ty) [circle,draw=black,scale=0.8](t1){};
\node at (\tx+5,\ty) [circle,draw=black,scale=0.8](t2){};
\node at (\tx+.6,\ty+1) [circle,draw=black,fill=black,scale=0.5](h1){};
\node at (\tx+2,\ty-1.5) [circle,draw=black,fill=black,scale=0.5](h2){};
\node at (\tx+6.4,\ty+1) [circle,draw=black,fill=black,scale=0.5](h3){};
\node at (\tx+5,\ty-1.5) [circle,draw=black,fill=black,scale=0.5](h4){};
\node at (\tx+.6,\ty-2.5) [circle,draw=black,fill=black,scale=0.5](h0){};
\draw  (t1) -- (h1) node[midway, above] {\small 1};
\draw  (h2)-- (t1) node[midway, right] {\small 2};
\draw [ultra thick] (t1) -- (t2) node[midway, above] {\small 5};
\draw  (t2) -- (h3) node[midway, above] {\small 2};
\draw  (t2) -- (h4) node[midway, left] {\small 2};
\draw [thick, style=dashed]  (h2) -- (h4)  node[midway, below] {\small 3};
\draw  (h2)-- (h0) node[midway, below] {\small 1};
\draw [thick, style=dashed]  (h3) .. controls+(0.4,-1.7) .. (h4)  node[midway, left] {\small 2};

\node at (\ux+2,\uy) [circle,draw=black,scale=0.8](t1){};
\node at (\ux+5,\uy) [circle,draw=black,scale=0.8](t2){};
\node at (\ux+.6,\uy+1) [circle,draw=black,fill=black,scale=0.5](h1){};
\node at (\ux+2,\uy-1.5) [circle,draw=black,fill=black,scale=0.5](h2){};
\node at (\ux+6.4,\uy+1) [circle,draw=black,fill=black,scale=0.5](h3){};
\node at (\ux+5,\uy-1.5) [circle,draw=black,fill=black,scale=0.5](h4){};
\node at (\ux+6.4,\uy-2.5) [circle,draw=black,fill=black,scale=0.5](h5){};
\node at (\ux+.6,\uy-2.5) [circle,draw=black,fill=black,scale=0.5](h0){};
\draw  (t1) -- (h1) node[midway, above] {\small 1};
\draw  (h2)-- (t1) node[midway, left] {\small 2};
\draw [ultra thick] (t1) -- (t2) node[midway, above] {\small 5};
\draw  (t2) -- (h3) node[midway, above] {\small 3};
\draw  (t2) -- (h4) node[midway, right] {\small 1};
\draw [thick, style=dashed]  (h2) -- (h4)  node[midway, below] {\small 5};
\draw  (h2)-- (h0) node[midway, below] {\small 1};
\draw  (h5) -- (h4) node[midway, below] {\small 1};

\node at (\vx+2,\vy) [circle,draw=black,scale=0.8](t1){};
\node at (\vx+5,\vy) [circle,draw=black,scale=0.8](t2){};
\node at (\vx+.6,\vy+1) [circle,draw=black,fill=black,scale=0.5](h1){};
\node at (\vx+2,\vy-1.5) [circle,draw=black,fill=black,scale=0.5](h2){};
\node at (\vx+6.4,\vy+1) [circle,draw=black,fill=black,scale=0.5](h3){};
\node at (\vx+5,\vy-1.5) [circle,draw=black,fill=black,scale=0.5](h4){};
\node at (\vx+6.4,\vy-2.5) [circle,draw=black,fill=black,scale=0.5](h5){};
\node at (\vx+.6,\vy-2.5) [circle,draw=black,fill=black,scale=0.5](h0){};
\draw  (t1) -- (h1) node[midway, above] {\small 1};
\draw  (h2)-- (t1) node[midway, left] {\small 2};
\draw [ultra thick] (t1) -- (t2) node[midway, above] {\small 5};
\draw  (t2) -- (h3) node[midway, above] {\small 4};
\draw  (t2) -- (h4) node[midway, right] {\small 1};
\draw [thick, style=dashed]  (h2) -- (h4)  node[midway, below] {\small 2};
\draw  (h2)-- (h0) node[midway, below] {\small 1};
\draw  (h5) -- (h4) node[midway, below] {\small 1};

\node at (\xx+2,\xy) [circle,draw=black,scale=0.8](t1){};
\node at (\xx+5,\xy) [circle,draw=black,scale=0.8](t2){};
\node at (\xx+.6,\xy+1) [circle,draw=black,fill=black,scale=0.5](h1){};
\node at (\xx+3.5,\xy-1) [circle,draw=black,fill=black,scale=0.5](h2){};
\node at (\xx+6.4,\xy+1) [circle,draw=black,fill=black,scale=0.5](h3){};
\node at (\xx+3.5,\xy-2.4) [circle,draw=black,fill=black,scale=0.5](h5){};
\draw  (t1) -- (h1) node[midway, above] {\small 1};
\draw  (h2)-- (t1) node[midway, below] {\small 1};
\draw [ultra thick] (t1) -- (t2) node[midway, above] {\small 6};
\draw  (t2) -- (h3) node[midway, above] {\small 1};
\draw  (t2) -- (h2) node[midway, below] {\small 2};
\draw  (h5) -- (h2) node[midway, below right] {\small 1};

\node at (\yx+2,\yy) [circle,draw=black,scale=0.8](t1){};
\node at (\yx+5,\yy) [circle,draw=black,scale=0.8](t2){};
\node at (\yx+.6,\yy+1) [circle,draw=black,fill=black,scale=0.5](h1){};
\node at (\yx+2,\yy-1.5) [circle,draw=black,fill=black,scale=0.5](h2){};
\node at (\yx+6.4,\yy+1) [circle,draw=black,fill=black,scale=0.5](h3){};
\node at (\yx+5,\yy-1.5) [circle,draw=black,fill=black,scale=0.5](h4){};
\node at (\yx+6.4,\yy-2.5) [circle,draw=black,fill=black,scale=0.5](h5){};
\node at (\yx+.6,\yy-2.5) [circle,draw=black,fill=black,scale=0.5](h0){};
\draw  (t1) -- (h1) node[midway, above] {\small 1};
\draw  (h2)-- (t1) node[midway, left] {\small 1};
\draw [ultra thick] (t1) -- (t2) node[midway, above] {\small 6};
\draw  (t2) -- (h3) node[midway, above] {\small 2};
\draw  (t2) -- (h4) node[midway, right] {\small 1};
\draw [thick, style=dashed]  (h2) -- (h4)  node[midway, below] {\small 4 or 6};
\draw  (h2)-- (h0) node[midway, below] {\small 1};
\draw  (h5) -- (h4) node[midway, below] {\small 1};

\node at (\zx+2,\zy) [circle,draw=black,scale=0.8](t1){};
\node at (\zx+5,\zy) [circle,draw=black,scale=0.8](t2){};
\node at (\zx+.6,\zy+1) [circle,draw=black,fill=black,scale=0.5](h1){};
\node at (\zx+2,\zy-1.5) [circle,draw=black,fill=black,scale=0.5](h2){};
\node at (\zx+6.4,\zy+1) [circle,draw=black,fill=black,scale=0.5](h3){};
\node at (\zx+5,\zy-1.5) [circle,draw=black,fill=black,scale=0.5](h4){};
\node at (\zx+6.4,\zy-2.5) [circle,draw=black,fill=black,scale=0.5](h5){};
\node at (\zx+.6,\zy-2.5) [circle,draw=black,fill=black,scale=0.5](h0){};
\draw  (t1) -- (h1) node[midway, above] {\small 1};
\draw  (h2)-- (t1) node[midway, left] {\small 1};
\draw [ultra thick] (t1) -- (t2) node[midway, above] {\small 7};
\draw  (t2) -- (h3) node[midway, above] {\small 1};
\draw  (t2) -- (h4) node[midway, right] {\small 1};
\draw [thick, style=dashed]  (h2) -- (h4)  node[midway, below] {\small 4 or 6};
\draw  (h2)-- (h0) node[midway, below] {\small 1};
\draw  (h5) -- (h4) node[midway, below] {\small 1};
\end{tikzpicture}
\caption{\label{fig:newhandles} $\mF'_{9,4}$: Cyclic $(9,4)$-reducible H-handles (numbers: path lengths; plain lines: induced paths with degree 2 interior vertices; dashed lines: induced paths with possibly vertices of degree greater than two).}
\end{figure}

\begin{proposition}
 For any graph $G$, every handle from $\mF'_{9,4}$ from Figure~\ref{fig:newhandles} is $(9,4)$-reducible in $G$.
\end{proposition}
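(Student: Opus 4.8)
The plan is to treat the thirteen configurations of $\mF'_{9,4}$ one at a time, in each case starting from an arbitrary $(9,4)$-coloring of $G$ minus the interior of the configuration and extending it. Recall that each configuration is an H-handle $H(n_1,n_2,n,n_3,n_4)$, with its two degree-$3$ nodes $u,v$ and its four ports, together with one or more extra paths of prescribed length joining some of the ports. For the underlying H-handle alone, reducibility can fail only for a restricted set of ``obstructing'' colorings of the ports, namely those for which no admissible pair of color sets can be placed on $u$ and $v$ so as to make all five component paths $(9,4)$-colorable with precolored ends. The role of the extra path(s) is precisely to forbid these obstructing port colorings: since such an extra path is already consistently colored in $G-\int(H)$, the color sets of the two ports it joins must satisfy the compatibility condition for a precolored path of that length, which is exactly the characterization established in Section 4.

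Concretely, for each configuration I would proceed in three steps. First, read off from Section 4 the constraints that the extra path(s) force on the port color sets. Second, using those constraints, pick color sets for $u$ and $v$; here the freedom, granted by the definition of H-handle reducibility, to afterwards modify the color sets of some degree-$2$ vertices of the handle is what makes such a choice always available, the concrete gadgets being the special $(2b+1,b)=(9,4)$-colorings of paths constructed in Section 5. Third, extend the coloring along each of the five component paths $P_1,P_2,P,P_3,P_4$ separately, their ends now all being colored, once more by the Section 4 characterization. In several cases an equivalent and shorter route, in the spirit of the unicity-of-the-core discussion above, is to first reduce one component path of the configuration --- which is $(9,4)$-reducible, sometimes only because the extra path turns it into a parity handle --- and then to recognise the remaining structure as one of the reducible handles of Theorems~\ref{ThmP}, \ref{ThmS}, \ref{ThmH}, concluding by those theorems.

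The main obstacle is the size and delicacy of the case analysis rather than any isolated conceptual point: for every configuration one must enumerate the ``shapes'' of port colorings that survive the extra-path constraint and, for each, exhibit admissible color sets for $u$ and $v$ and check that they propagate correctly through the central path to the opposite side. The tightest cases are those whose central path has length close to $\even(2b/e)=8$, so that this path is only just short of being reducible on its own as a $P$-handle, since there the extra-path constraint has to be used most sharply; also delicate are the cases in which two component paths share a port or two ports coincide, where the Section 4 path-colorability conditions interact nontrivially. All of this is precisely what the material of Sections 4--6 was developed to control, so no machinery beyond careful bookkeeping is needed.
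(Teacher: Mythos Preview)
Your strategy is the same as the paper's: use the Section~4 compatibility condition to translate each extra path into a constraint on the intersections of the port color sets, then choose color sets for $h_0$ and $h_n$ compatible with all five component paths and invoke Theorem~\ref{theoremalgorithmcoloration}. The paper, however, actually carries out this programme for one configuration (the $H(2,2,4,2,2)$ with both pairs of ports at distance~$2$, forcing $|\varphi(v_1)\cap\varphi(v_2)|\ge 3$ and $|\varphi(v_3)\cap\varphi(v_4)|\ge 3$) and explicitly states that the remaining twelve were checked by computer, whereas your write-up stays entirely at the level of a plan and works no case concretely; to match the paper's standard you should at least exhibit one configuration in full.
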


\begin{proof}
By computer (see Section~\ref{sec3.2} for the method and link to the code).
\end{proof}
As we will see in Section~\ref{sec3.3}, we experimentally observed that the reducible configurations from $\mF_{9,4}\cup\mF'_{9,4}$ are sufficient to obtain an empty core starting from any hexagonal graph, i.e., to completely $(9,4)$-color it, except if the graph contains the graph of Figure~\ref{7flower} as a subgraph.

However, we further present in Table~\ref{tbSp} of Appendix~\ref{app} a new set $\mF''_{9,4}$ of (more complex) $(9,4)$-reducible configurations obtained by extending S-handles and H-handles on one or two ports. The $(9,4)$-reducibility of each of these configurations has been tested with the computer.

\subsection{Testing configurations with the computer}
\label{sec3.2}
In order to test that the configurations of $\mF'_{9,4}$ and $\mF''_{9,4}$ are each $(9,4)$-reducible, we used the computer.
The general process was to test that by assigning any 4-color-set on each of the ports of the configuration, it is always possible to complete the $(9,4)$-coloring of the other vertices of the configuration.
To reduce the complexity (and running time), we take into account the symmetry in colors. Let us illustrate on an example of a H-handle  (with 4 ports and possibly with cycles, like the ones of Figure~\ref{fig:newhandles}). There are ${9\choose{4}} =126$ 4-color-sets and 4 ports, hence $126_{5} = 252047376$ assignments to test. However, by permuting the colors, we can always consider that the first port is given the color set $C_1=\{1,2,3,4\}$, which we write as $C_1=1234$ for short. For the second port, we can restrict to 5 4-color-sets given by the vectors $\mathbf{v}=(i,4-i), i\in [0-4]$ where $i$ is the number of colors shared with $C_1$ (and $4-i$ is the number of colors in $[5-9]$). Hence we can assign to Port 2 the color-sets from $C_2=\{1234,1235,1256,1567,5678\}$ for instance.

Then for the third port, depending on the number of shared colors between the color-sets of Port 1 and Port 2, there can be 5, 9, 19, 14, or 22 4-color-sets:

$C_3 = \left\{\begin{array}{ll}
               C_2, & if c(v_2)=1234,\\
               \{1234,1235,1236,1245,1246,1256,1267,1456,1467,1567,1678,&\\
               4567,4678,5678,6789\} & if c(v_2)=1235,\\
               \{1234,1235,1237,1256,1257,1278,1345,1347,1356,1357,1378,&\\
               1567,1578,1789,3456,3457,3478,3567,3578,3789,5678,5789\} & if c(v_2)=1256,\\
               \{1234,1235,1238,1256,1258,1289,1567,1568,1589,2345,2348,&\\
               2356,2358,2389,2567,2568,2589,5678,5689\} & if c(v_2)=1567,\\
               \{1234,1235,1239,1256,1259,1567,1569,5678,5679\} & if c(v_2)=5678.\\
              \end{array}\right.$
              
For the fourth port, we then test the 126 color-sets (this could also be optimized by looking at the sets chosen in $C_2$ and $C_3$ but we do not do it).

Thus, in total, we have reduced the total number of assignments to test from $126_{5}$ to $(5+15+22+19+9)126=8820$.

The Python program \texttt{TestReducibility.py} developed for checking reducibility of a configuration of type $S, H, S', S'', H', H''$ and those of Figure~\ref{fig:newhandles} can be accessed at \url{https://github.com/otogni/MCD-coloring.git}.

\subsection{Computational reducibility experiments}
\label{sec3.3}
We have performed computational experiments for testing the reduction tools defined in the previous section on hexagonal graphs for finding a $(9,4)$-coloring. The algorithms were coded in C++ and ran on an Intel Xeon CPU at $2.67$ GHz and 24 GB of memory. The C++ source code can be accessed at \url{https://github.com/otogni/MCD-coloring.git}.

\subsubsection*{Experiment 1: Random generation}

{\noindent\textbf{Graph generation: }} The graphs are generated randomly on a grid of size $\ell\times h$ (the triangular lattice is considered as a square lattice with diagonals) by choosing randomly the coordinates of a point and testing if the corresponding vertex can be added to the graph without creating a triangle (repeated $5\ell h$ times). In order to obtain the 'harder' instances of random hexagonal graphs, we then do a final pass in which we consider the points of the grid in sequence and test if they can be added to the graph. Hence the graphs obtained are maximal triangle-free subgraphs of the triangular lattice, i.e., no point in the area can be added without creating a triangle.

{\noindent\textbf{Reduction algorithm: }} it consists in testing, for each vertex $x$ in sequence, if $x$ is the end-vertex of a handle from $\mF_{9,4}$ or $\mF'_{9,4}$.

Table~\ref{tb1} presents some measures of Experiment 1: the mean number of nodes (degree-3 vertices) and time needed to generate and completely reduce the graph depending on the side-length of the grid $n$ (the square root of the grid' size). One can observe that even for a hexagonal graph with hundreds of thousands degree-3 vertices (and around 2M vertices in total), our reduction algorithm can still construct a $(9,4)$-coloring in a reasonable time, even if it is far from being optimized.

\begin{table}[h]
\begin{tabular}{|l|llllllll|}\hline
$n$ (grid size $n\times n$) & 10  & 25   & 50     & 100   & 200  & 500   & 1000  & 2000 \\
\# nodes      & 8   & 92   & 492    & 1763  & 7450 & 46722 & 188452 & 754485\\
time (s)      & 0.0022 & 0.0026 & 0.0046 & 0.026 & 0.31 & 12.3  & 185 & 3618\\  \hline   
\end{tabular}

\caption{\label{tb1}Mean number of nodes (degree 3 vertices) and mean total time needed for generation and reduction depending on the grid side size $n$.}
\end{table}

Since the graphs generated in Experiment 1 have all been reduced, we tried to build more complex and 'hard' to reduce hexagonal graphs. For this, our intuition tells us that hexagonal graphs with many 9-cycles will be harder to reduce. Experiments in this direction lead us to generate semi-random hexagonal graphs with many 9-cycles by using the flower graph depicted on the left of Figure~\ref{counter}, as explained in the following experiment.

\subsubsection*{Experiment 2: Semi-random generation}

{\noindent\textbf{Graph generation: }} The graphs $G_{p,d}$ are generated by starting from an empty grid of size $\ell\times h$ and then first putting flower graphs isomorphic to the graph depicted on the left of Figure~\ref{counter} in a quasi-regular pavement, i.e., a flower is positioned in coordinates $(pi+r_{ij}^1+5,pj+r_{ij}^2+5)$, for $0\le i<h/15$ and $0\le j< \ell/15$, with $p\ge 10$ being a parameter for controlling the distance between flowers (observing that $p=11$ produces a maximum number of hard instances) and $r_{ij}^1, r_{ij}^2$ being random integers between 0 and $d$. Note that $d$ has to be small enough compared to $p$ in order to avoid triangles. The obtained graph is then completed randomly in order to obtain a maximal induced triangle-free subgraph of the triangular grid, as described in Experiment 1.  

{\noindent\textbf{Reduction algorithm:}} it consists in testing, for each vertex $x$ in sequence, if $x$ is the end-vertex of a handle from $\mF_{9,4}$ or  $\mF'_{9,4}$.

Again, the Reduction algorithm has been tested on hundreds of millions of semi-random graphs (mainly for grid size $50\times 50$) and it allows to reduce completely all the graphs except for $G_{9,0}$ which is always a subgraph of the regular lattice containing faces of length 6 and 9 obtained by translating the graph of Figure~\ref{7flower}. For such a graph, the reduction process always falls into a non-reducible configuration. However, this tilling can be easily $(9,4)$-colored using only 9 4-subsets of $\{0,\ldots, 8\}$ by extending the precoloring of Figure~\ref{7flower}.

\begin{figure}[ht]
\centering
\includegraphics[width=8cm]{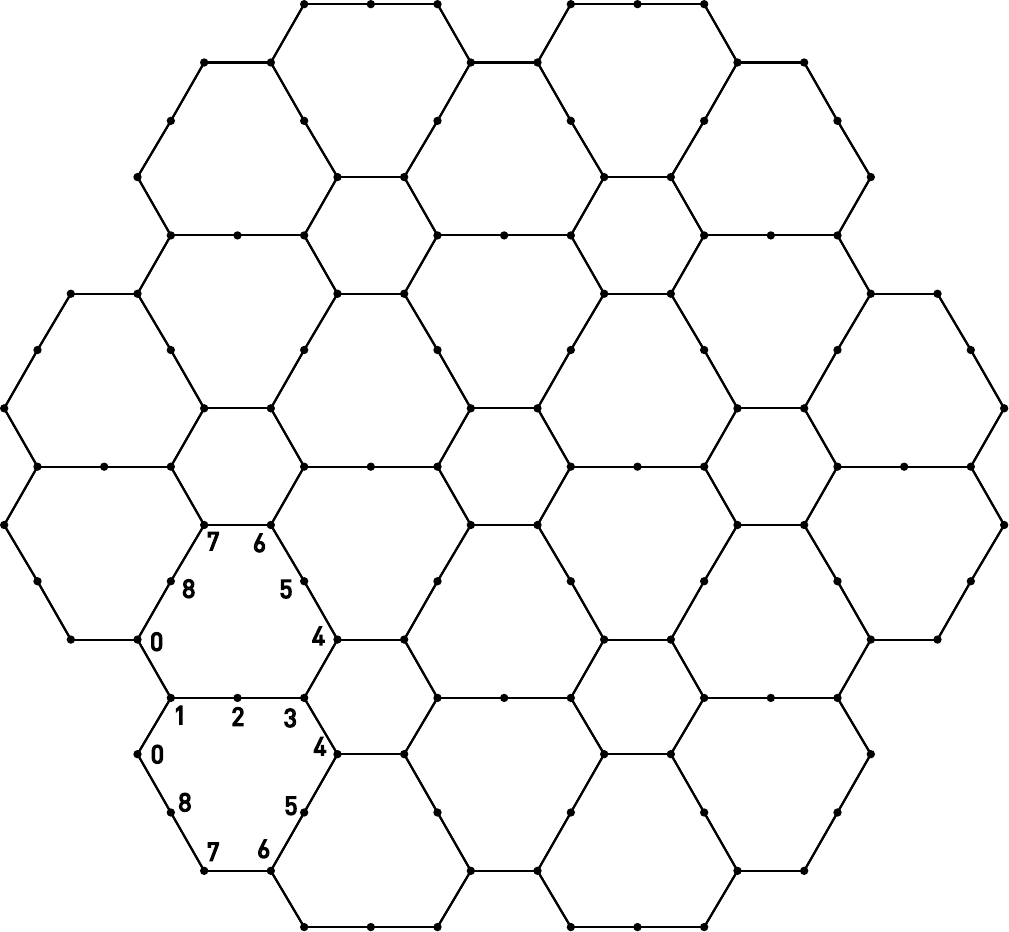}
\caption{\label{7flower}A hexagonal graph that contains none of our $(9,4)$-reducible configurations but can be $(9,4)$-colored by extending the coloring of the two 9-cycles (where number $i$ represents the 4-colors subset $[i,i+4 \bmod{9}]$) to the other 9-cycles.}
\end{figure}

Note that our reduction algorithm is very basic and could be improved in many ways, for instance by searching first for reducible configurations on its boundary instead of looking at each node in sequence.
%

\subsection{Possible Extensions}
Even if we were not able to completely solve Conjecture~\ref{mcdconj} by hand, we made a progress by providing many new reducible configurations that may be used to obtain a proof of the conjecture in a similar way as for proving such graphs are $(7,3)$-colorable, i.e., showing that in any hexagonal graph, there always exists one of these $(9,4)$-reducible configurations somewhere in (the periphery of) the graph. Maybe these new configurations are not enough for such a proof and one will have to find other reducible configurations. In particular, we believe that there are many $(9,4)$-reducible configurations with a cycle and 3 or 4 paths of prescribed lengths going outside the cycle, other than the 25 configurations of Figure~\ref{fig:newhandles} but we did not try to find them exhaustively.

Another possible direction is to restrict to subclasses of hexagonal graphs, for instance we can ask: is any hexagonal graph with odd-girth 11 $(9,4)$-colorable? or even $(11,5)$-colorable?
We thus propose the following conjecture that generalizes Conjecture~\ref{mcdconj}:
\begin{conjecture}
For any $k\ge 1$, any hexagonal graph $G$ of odd-girth at least $2k+1$ is $(2k+1,k)$-colorable.
\end{conjecture}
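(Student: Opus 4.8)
The plan is to mimic, for general $k$, the argument that yields Corollary~\ref{corol} (the cases $k\le 3$) and the Witkowski--\v{Z}erovnik proposition: fix $b=k$ and $e=1$, assemble from Theorems~\ref{ThmP},~\ref{ThmS} and~\ref{ThmH} a family $\mF_k$ of $(2k+1,k)$-reducible handles, and prove that $\core_{\mF_k}(G)=\emptyset$ for every hexagonal graph $G$ of odd-girth at least $2k+1$; the conjecture then follows from the basic reduction theorem of Section~\ref{sec2}. Concretely, $\mF_k$ should contain the handle $P(2k)$, all parity handles $PP(n)$ with $2\le n\le 2k-1$, the S-handles $S(2k-1,2,1)$ and $S(2k-j,j,j)$ for $1\le j\le k$, and the H-handles with central length $2k-3$ and $2k-4$ listed in Theorem~\ref{ThmH}; most probably one must also add ``constrained'' H-handles in the spirit of $\mF'_{9,4}$ (an H-handle together with prescribed-length paths between some of its ports), whose $(2k+1,k)$-reducibility is obtained by the padding argument used for $\mF'_{9,4}$: choose the colour sets of the two degree-$3$ vertices greedily and invoke Theorem~\ref{theoremalgorithmcoloration}.

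First I would clear the degenerate cases. Taking a core only deletes vertices, so it preserves triangle-freeness, keeps the graph inside the triangular lattice, and cannot lower the odd-girth; hence at every step we are still reducing a hexagonal graph of odd-girth at least $2k+1$. If $\core_{\mF_k}(G)$ contains no vertex of degree $3$, it is a disjoint union of cycles: every even cycle is bipartite, hence $(2k+1,k)$-colorable, and every odd cycle $C_{2m+1}$ has $m\ge k$ by the odd-girth hypothesis and is therefore $(2k+1,k)$-colorable, since an odd cycle is $(a,b)$-colorable for every ratio $a/b\ge\chi_f(C_{2m+1})=(2m+1)/m$. So it remains to show that a hexagonal graph of odd-girth at least $2k+1$ that contains a node always contains a handle of $\mF_k$; this forces the core-building process to continue until no node is left, reducing us to the case just treated.

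The heart of the argument is the corner exploration of Havet~\cite{hav01} and Aubry et al.~\cite{AGT10} already used for Theorem~\ref{handle}: embed the graph in the plane, pick a node $x$ that is leftmost among the topmost nodes, assume w.l.o.g.\ that $x$ is a left node, and look at the UR-path $P$ from $x$, ending at a node $y$. If $P$ has length $\ge 2k$ we are done with a handle $P(2k)$. Otherwise $P$ is short, and since $x$ is topmost it must be a run of UR-edges followed by a run of R-edges followed by a run of DR-edges, which severely restricts the neighbourhoods of $x$ and $y$. One then enumerates the possible shapes of the union of $P$ with the two other paths leaving $x$ and the two other paths leaving $y$, and shows that each such shape either contains a handle of $\mF_k$ (a small parity handle, an S-handle, an H-handle of Theorem~\ref{ThmH}, or an $\mF'$-type handle) or else closes up into an odd cycle of length strictly less than $2k+1$, contradicting the odd-girth hypothesis. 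The odd-girth assumption is precisely what rules out the dense ``flower-like'' configurations that obstruct the unrestricted $(9,4)$ statement.

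The step I expect to be the real obstacle is this last enumeration. For $k\le 4$ it is a bounded (if lengthy) case check, essentially the analysis sketched for Theorem~\ref{handle}. For general $k$ the UR-path and the side paths can be long while staying inside a bounded ``corridor'', so a brute-force enumeration does not terminate; one needs a uniform, parameter-dependent version, proving for every length profile of the paths around the corner that some reducible configuration appears or that a short odd cycle is forced. Carrying this out will require (i) pinning down the right infinite but finitely described families of reducible H-handles-with-side-paths generalising $\mF'_{9,4}$, (ii) proving their $(2k+1,k)$-reducibility via the extension technique of Theorem~\ref{theoremalgorithmcoloration} together with the path-colouring lemmas of Sections~4--5, and (iii) a geometric argument showing that the corner of a triangular-lattice graph of large odd-girth cannot avoid all of them. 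Making this ``there is always a reducible configuration in the periphery'' heuristic precise and uniform in $k$, ideally backed by a machine-checkable completeness proof for the configuration list, is where the main difficulty lies.
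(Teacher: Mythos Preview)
The statement you are addressing is a \emph{conjecture} in the paper, not a theorem; the paper offers no proof of it. All the paper says is that it holds for $k\le 3$ (by Corollary~\ref{corol}) and, for $k=4$, only for those hexagonal graphs whose core with respect to $\mF_{9,4}\cup\mF'_{9,4}$ happens to be empty. Since every hexagonal graph already has odd girth at least $9$, the case $k=4$ of the conjecture is exactly Conjecture~\ref{mcdconj} (McDiarmid--Reed), which the entire paper is devoted to attacking and explicitly leaves open. So there is no ``paper's own proof'' to compare against: a proof of this conjecture for all $k$ would in particular settle McDiarmid--Reed.

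What you have written is a research outline, and you say so yourself: you flag the corner enumeration as ``the real obstacle'' and concede that making it uniform in $k$ ``is where the main difficulty lies''. That is an accurate self-assessment, but it means the proposal is not a proof. Two concrete points make the gap visible. First, the paper's computations in Section~3.2 produce explicit hexagonal graphs (Figure~\ref{fig:exirred}) containing \emph{no} handle from $\mF_{9,4}\cup\mF'_{9,4}$; so already for $k=4$ the family of reducible configurations you propose to assemble is known to be insufficient, and nothing in your outline explains how the odd-girth hypothesis would exclude analogues of these graphs for larger $k$. Second, your step~(iii), the uniform geometric argument that a corner always carries a reducible configuration, is precisely the missing ingredient the authors point to in Section~3.3 as a ``possible extension''; you restate the hope but do not supply the argument.
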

The conjecture is true for $k\le 3$ and for $k=4$ for hexagonal graphs $G$ such that $\core_{\mF_{9,4}\cup \mF'_{9,4}\cup \mF''_{9,4}}(G)=\emptyset$.
A more general (but weaker) result in this direction is the one of Klostermeyer and Zhang~\cite{KZ02} proving that any planar graph with odd girth at least $10k-7$ (and $k\ge 2$) is $(2k+1,k)$-colorable.

The three-dimensional generalization (called cannonball graph~\cite{SZ2016}) also seems interesting. One question could be: Is any triangle-free subgraph of the 3D triangular grid $(5,2)$-colorable?

\section{Path multicoloring}
In this section we present general results about $(a,b)$-colorings of a path with precolored end-vertices. As a corollary, we will obtain Theorem~\ref{ThmP}.

Recall that $a,b,e$ are three integers such that $a=2b+e$. 
We say that $X$ is a {\em good} set if $\vert X \vert = b $ and $X \subset \{1,\ldots,a\}$. 

\begin{definition}
 Let $X,Y$ be good sets and $k$ be an integer. The sets $X$ and $Y$ are said to be {\em $k$-compatible} if :
$$\vert X \cap Y \vert \geq b - e \frac{k}{2} \ , \ if \ k \ is \ even; $$
$$\vert X \cap Y \vert \leq  e \frac{k-1}{2} \  , \ if \ k \ is \ odd. $$ 
Moreover, if the above two inequalities are equalities, we say that $X$ and $Y$ are {\em $k$-exactly-compatible}.
\end{definition}

The following result is already proven in~\cite[Lemma 4]{jan00} in a different way.
\begin{theorem}
\label{theoremprincipal}
For any integer $n\ge 0$, if $\varphi$ is a $(a,b)$-coloring of the path $P_{n+1}$, then 
$\varphi(0)$  and $\varphi(n)$ are $n$-compatible.
\end{theorem}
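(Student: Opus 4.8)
The plan is to proceed by induction on $n$, tracking the intersection size $|\varphi(0)\cap\varphi(i)|$ as $i$ runs along the path. First I would set up the base cases: for $n=0$ the statement says $|\varphi(0)\cap\varphi(0)|=b\ge b$, which is the (even) $0$-compatibility condition with equality; for $n=1$ adjacency forces $\varphi(0)\cap\varphi(1)=\emptyset$, so $|\varphi(0)\cap\varphi(1)|=0\le e\cdot\frac{0}{2}=0$, the odd condition. The key quantitative observation is the following: for any two good sets $X,Y$ with $X\cap Z=\emptyset$ for a good set $Z$ that is $k$-compatible with nothing in particular — more precisely, the single-step estimate I need is that if $X,Y$ are good sets and $Y\cap Z=\emptyset$ with $Z$ a good set, then $|X\cap Z|$ and $|X\cap Y|$ cannot both be small. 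Concretely, since $|Y\cup Z|\le a=2b+e$ and $|Y|=|Z|=b$, inclusion–exclusion gives $|Y\cap Z^c|=b-|Y\cap Z|$ available slots outside $Z$, and counting how much of $X$ must land inside $Z$ versus inside $Y$ yields $|X\cap Z|\ge |X\cap Y| - e$ and, in the other direction, $|X\cap Y^c|\ge b - |X\cap Z| $ type bounds. The right way to package this: if $Y$ and $Z$ are disjoint good sets then for any good set $X$,
$$|X\cap Z| \ge b - e - |X\cap Y|, \qquad |X\cap Z| \le |X\cap Y| + e.$$
The first inequality holds because the complement of $Y\cup Z$ has size at most $a-2b=e$, so at most $e$ elements of $X$ avoid both $Y$ and $Z$; hence $|X\cap Y|+|X\cap Z|\ge b-e$.

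With this lemma in hand, the induction step splits on the parity of $n$. Suppose $\varphi$ restricted to $P^{n-1}$ (the subpath on $\{0,\dots,n-1\}$) is a valid coloring, so by induction $\varphi(0)$ and $\varphi(n-1)$ are $(n-1)$-compatible; also $\varphi(n-1)\cap\varphi(n)=\emptyset$. If $n$ is even then $n-1$ is odd and the hypothesis reads $|\varphi(0)\cap\varphi(n-1)|\le e\frac{n-2}{2}$; applying the first inequality of the lemma with $X=\varphi(0)$, $Y=\varphi(n-1)$, $Z=\varphi(n)$ gives $|\varphi(0)\cap\varphi(n)|\ge b-e-e\frac{n-2}{2}=b-e\frac{n}{2}$, which is exactly the even $n$-compatibility bound. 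If $n$ is odd then $n-1$ is even and $|\varphi(0)\cap\varphi(n-1)|\ge b-e\frac{n-1}{2}$; applying the second inequality of the lemma gives $|\varphi(0)\cap\varphi(n)|\le |\varphi(0)\cap\varphi(n-1)|^c$-type bound — more carefully, $|\varphi(0)\cap\varphi(n)|\le b-|\varphi(0)\cap\varphi(n-1)| \le e\frac{n-1}{2}$, matching the odd bound. So the induction closes.

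The main obstacle I anticipate is getting the single-step inequalities tight and correctly oriented: one has to be careful that the bound propagated in the even case ($\ge$) and the one propagated in the odd case ($\le$) really do feed each other, and that the constant $e$ lost at each step accumulates to exactly $e\frac{k}{2}$ rather than something weaker. The cleanest route is probably to prove once and for all the symmetric statement: for disjoint good sets $Y,Z$ and any good set $X$, both $b-|X\cap Y|-e \le |X\cap Z|$ and $|X\cap Z|\le b-|X\cap Y|+ (|Y|+|Z|-b) $ hold, then specialize. An alternative, possibly slicker, approach would be to phrase everything in terms of the complements $\overline{\varphi(i)}$ (which have size $a-b=b+e$) and a "potential" $f(i)=|\varphi(0)\cap\varphi(i)|$ that must alternately be large/small; but the induction above is the most transparent and I would present that. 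Since the excerpt notes this is already proved in \cite[Lemma 4]{jan00} by a different method, I would keep the argument short and self-contained.
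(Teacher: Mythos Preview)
Your argument is correct and follows essentially the same induction-on-length idea as the paper; the only cosmetic difference is that you induct in single steps $n-1\to n$ using the disjointness of $\varphi(n-1)$ and $\varphi(n)$, whereas the paper inducts in double steps $n\to n+2$ using the $2$-compatibility bound $|\varphi(n)\cap\varphi(n+2)|\ge b-e$ (which is exactly your first inequality applied once). One small wording issue: the ``second inequality'' you first write as $|X\cap Z|\le |X\cap Y|+e$ is not the bound you actually use (and is not generally true); the correct bound, which you do apply in the odd case, is simply $|X\cap Z|\le b-|X\cap Y|$, coming from $X\cap Y$ and $X\cap Z$ being disjoint subsets of $X$.
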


\begin{proof}
Let $\varphi$ be a $(a,b)$-coloring of the path $P_{n+1}$. The assertion trivially holds for $n=0$. If $n=1$ then since $v_0$ and $v_1$ are neighbors, we have $\varphi(0) \cap \varphi(1) = \emptyset$ then $\vert \varphi(0) \cap \varphi(1) \vert =0 \leq 0$. If $n=2$ then we have $\varphi(0) \cap \varphi(1) = \varphi(1) \cap \varphi(2) = \emptyset$, so $b+e = a-b = \vert \{1,\ldots,a\} \setminus \varphi(1) \vert \geq \vert \varphi(0) \cup \varphi(2) \vert = 2b - \vert \varphi(0) \cap \varphi(2) \vert $, then $\vert \varphi(0) \cap \varphi(2) \vert \geq b-e $.

Assume that the property is true for $n \geq 2$ even. Let $\varphi$ be a $(a,b)$-coloring of $P^{n+2}$, then $\varphi(0)$ and $\varphi(n)$ are $n$-compatible, i.e. $ \vert \varphi(0) \cap \varphi(n) \vert \geq b - e \frac{n}{2}$ and $\varphi(n)$ and $\varphi(n+2)$ are $2$-compatible, i.e., $ \vert \varphi(n) \cap \varphi(n+2) \vert \geq b - e $. So $\vert \varphi(0) \cap \varphi(n+2) \vert \geq \vert \varphi(0) \cap \varphi(n) \cap \varphi(n+2) \vert  \geq \vert \varphi(0) \cap \varphi(n) \vert - \vert ( \varphi(0) \cap \varphi(n)) \setminus \varphi(n+2)  \vert \geq (b - e\frac{n}{2}) - e =b - e\frac{n+2}{2}$.

Assume now that the property is true for $n \geq 1$ odd. Let $\varphi$ be a $(a,b)$-coloring of $P^{n+2}$, then since $\varphi(0)$, $\varphi(n)$ are $n$-compatible and $\varphi(n)$, $\varphi(n+2)$ are $2$-compatible, we have $ \vert \varphi(0) \cap \varphi(n) \vert \leq  e \frac{n-1}{2}$, and  $ \vert \varphi(n) \cap \varphi(n+2) \vert \geq b - e $. So $\vert \varphi(0) \cap \varphi(n+2) \vert = \vert \varphi(0) \cap \varphi(n) \cap \varphi(n+2) \vert + \vert (\varphi(0) \cap \varphi(n+2)) \setminus \varphi(n) \vert \leq \vert \varphi(0) \cap \varphi(n) \vert + \vert \varphi(n+2) \setminus \varphi(n)  \vert \leq e \frac{n-1}{2} + e \leq e \frac{(n+2)-1}{2} $.
\end{proof}

For any two good sets $C_0 , C_n$, we define the canonical decomposition of subsets: 
$$C(1)=C_0 \cap C_n \ , \ C(2)=C_0 - C_n \ , \  C(3)=C_n - C_0 \ , \ C(4)=\{1,\ldots,a\} - C_0 - C_n.$$

For an $(a,b)$-coloring $\varphi$ of the path $P_{n+1}$, we define $\forall j \in \{1,2,3,4\}$ and $\forall k \in \{0,\ldots,n\}$ : 
$$c(j,k) = \vert \varphi(k) \cap C(j) \vert,$$
$$ c(k) = \big( c(1,k),c(2,k),c(3,k),c(4,k) \big). $$

Therefore $\forall j \in \{1,2,3,4\}$ and $\forall k \in \{0,\ldots,n-1\}$, we have the property:
$$ c(j,k) + c(j,k+1) \leq \vert C(j) \vert. $$ 

\begin{proposition}
\label{propositionexactly}
If $\varphi$ is a $(a,b)$-coloring of the path $P_{n+1}$ such that $n=2m$ is an even integer and $b\ge em$ and $\varphi(0)$ and $\varphi(n)$ are $n$-exactly-compatible, then $\forall k \in \{0,\ldots,n\}$:

$$ c(k) = \left \{ \begin{array}{ll}
(b-em,e(m-s),es,0), & \text{if } k=2s \text{ is even};\\
(0, es, e(m-s-1) , b - e(m-1)), & \text{if } k=2s+1 \text{ is odd.} 
\end{array}\right.$$
\end{proposition}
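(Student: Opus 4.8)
The plan is to argue by induction on $k$, tracking the vector $c(k) = (c(1,k),c(2,k),c(3,k),c(4,k))$ and using two facts repeatedly: the sum condition $c(j,k) + c(j,k+1) \le |C(j)|$ coming from disjointness of adjacent color-sets, and the fact that Theorem~\ref{theoremprincipal} forces $\varphi(0)$ and $\varphi(k)$ (resp. $\varphi(k)$ and $\varphi(n)$) to be $k$-compatible (resp. $(n-k)$-compatible). The key observation is that exact compatibility of the two endpoints is a \emph{tight} constraint: by chaining the $k$-compatibility between $\varphi(0),\varphi(k)$ with the $(n-k)$-compatibility between $\varphi(k),\varphi(n)$, the only way to reach equality $|\varphi(0)\cap\varphi(n)| = b-em$ is for every intermediate inequality in the telescoping argument of Theorem~\ref{theoremprincipal} to be an equality as well. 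So the first step is to record the cardinalities $|C(1)| = b - em$, $|C(2)| = |C(3)| = em$, $|C(4)| = a - 2b + e\cdot\text{(something)}$; more precisely, since $\varphi(0),\varphi(n)$ are $n$-exactly-compatible we have $|C(1)| = b - em$, $|C(2)| = |C(3)| = em$, and $|C(4)| = a - |C(1)| - |C(2)| - |C(3)| = (2b+e) - (b-em) - 2em = b + e - em = b - e(m-1)$.

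Next I would establish the base case $k=0$: $\varphi(0) = C_0 = C(1)\cup C(2)$, so $c(0) = (|C(1)|, |C(2)|, 0, 0) = (b-em, em, 0, 0)$, matching the formula at $s=0$. Then the inductive step splits into two symmetric halves. Suppose $c(2s) = (b-em, e(m-s), es, 0)$; I want $c(2s+1) = (0, es, e(m-s-1), b-e(m-1))$. From the sum conditions: $c(1,2s+1) \le |C(1)| - c(1,2s) = 0$, so $c(1,2s+1)=0$; $c(4,2s+1) \le |C(4)| = b-e(m-1)$. Now the total is $c(1,2s+1)+c(2,2s+1)+c(3,2s+1)+c(4,2s+1) = b$, giving $c(2,2s+1)+c(3,2s+1)+c(4,2s+1) = b$. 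Here I must bring in the endpoint constraints to pin down each coordinate rather than just their sum: applying Theorem~\ref{theoremprincipal} to the subpath from $0$ to $2s+1$ and to the subpath from $2s+1$ to $n$, together with the forced equalities, should give $c(4,2s+1) = |C(4)|$ (the odd-index vertices must use \emph{all} of $C(4)$, because $C(4)$ is disjoint from both $C_0$ and $C_n$ and the budget is tight), and then $c(2,2s+1) + c(3,2s+1) = b - |C(4)| = em - e = e(m-1)$, with the split $c(2,2s+1) = es$, $c(3,2s+1) = e(m-s-1)$ being forced by the sum conditions $c(3,2s)+c(3,2s+1) \le |C(3)| = em$ combined with the analogous constraint read from the other end. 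The even-to-odd-to-even step is handled the same way.

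The main obstacle I anticipate is making rigorous the claim that \emph{all} the telescoped inequalities are forced to be equalities, and in particular disentangling the individual coordinates $c(2,k)$ and $c(3,k)$ rather than merely their sum. The sum conditions alone give upper bounds but the lower bounds must come from the two-sided squeeze: the path from $0$ to $k$ forces $|\varphi(0)\cap\varphi(k)|$ to be at least (for even $k$) $b - e k/2 = b - es$, which equals $c(1,k)+c(2,k)$, so $c(2,k) \ge (b-es) - c(1,k)$; simultaneously the path from $k$ to $n$ forces $|\varphi(k)\cap\varphi(n)| \ge b - e(m-s)$ (for even $k$), which equals $c(1,k)+c(3,k)$, pinning $c(3,k)$ from below. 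Then the global equality $c(1,k)+c(2,k)+c(3,k)+c(4,k)=b$ with the upper bound $c(4,k)\le |C(4)|$ closes the system. I would need to check carefully that the hypothesis $b \ge em$ is exactly what guarantees all these quantities stay nonnegative (e.g. $e(m-s) \ge 0$ needs $s \le m$, which holds, but $b - em \ge 0$ is precisely the stated hypothesis and is what makes $|C(1)|$ a legitimate cardinality). Once the coordinate-by-coordinate squeeze is set up as a lemma, the induction itself is routine bookkeeping.
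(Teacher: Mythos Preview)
Your proposal is correct and uses the same two ingredients as the paper: the two-sided application of Theorem~\ref{theoremprincipal} (compatibility of $\varphi(k)$ with both endpoints) together with the adjacency sum constraints $c(j,k)+c(j,k\pm1)\le|C(j)|$. The paper organizes this slightly more cleanly than your forward induction: it first determines $c(k)$ for \emph{all} even $k$ directly via the two-sided squeeze (no induction needed, since Theorem~\ref{theoremprincipal} applied to both subpaths already pins every coordinate), and then obtains each odd $k=2s+1$ by sandwiching between the already-known $c(2s)$ and $c(2s+2)$ using only the sum constraints, which then become simultaneously tight.
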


\begin{proof}
Let $n=2m$ be an even integer such that $b \geq em$ and let $C_0 , C_n$ be two $n$-exactly-compatible good sets. Then we have by construction $\vert C(1) \vert = b-em$, $ \vert C(2) \vert = \vert C(3) \vert = em$, and $\vert C(4) \vert =a- \vert C(1) \vert - \vert C(2) \vert - \vert C(3) \vert =  b-e(m-1) $.

If $\varphi$ is a $(a,b)$-coloring of the path $P_{n+1}$ such that $\varphi(0)=C_0$ and $\varphi(n)=C_n$, then $\forall k \in \{0,\ldots,n\}$:
\begin{itemize}
 \item If $k=2s$ is even, then by Theorem \ref{theoremprincipal},  $\varphi(k)$ and $\varphi(n)$ are $(n-k)$-compatible and thus $\vert \varphi(k) \cap \varphi(n) \vert \geq b - e \frac{n-k}{2} = b - e(m-s) $; $\varphi(k)$ and $\varphi(0)$ are $k$-compatible and thus $\vert \varphi(k) \cap \varphi(0) \vert \geq b - e \frac{k}{2} = b - es$. 
So $\vert \varphi(k) \setminus \varphi(0) \vert  \leq es$ and thus $c(3,k) \leq es $. Since $\vert C(1) \vert = b-em $ and $\vert \varphi(k) \cap \varphi(n) \vert \geq b - e(m-s) $ then $c(3,k) \geq es$, so $c(3,k) = es$.  Since $c(1,k)+c(2,k) = \vert \varphi(k) \cap \varphi(0) \vert \geq  b - es $, and $c(1,k)+c(2,k)+c(3,k)+c(4,k)= b$, then $c(4,k)=0$ and $c(1,k)+c(2,k) =  b - es $. Since $c(1,k)+c(3,k) =\vert \varphi(k) \cap \varphi(n) \vert \geq  b - e(m-s) $, we have $c(1,k) \geq b-em = \vert C(1) \vert$, and thus $c(1,k)=b-em$ and $c(2,k)=e(m-s)$.
\item If $k=2s+1$ is odd, by the above, we have $c(2s)=(b-em,e(m-s),es,0)$ and $c(2s+2)=(b-em,e(m-s-1),e(s+1),0)$. Since $\vert C(1) \vert = b-em=c(1,2s)$ then $c(1,k)=0$. Since $\vert C(2) \vert = em$ and $c(2,2s)=e(m-s)$ then $c(2,k) \leq es$. Since $\vert C(3) \vert = em$ and $c(3,2s+2)=e(s+1)$ then $c(3,k) \leq e(m-s-1)$. Since $\vert C(4) \vert = b-e(m-1)$  then $c(4,k) \leq b-e(m-1)$. But $c(1,k)+c(2,k)+c(3,k)+c(4,k)= b$, therefore all the inequalities are equalities.
\end{itemize}
\end{proof}

\begin{proposition}
\label{propositionexactlynimpair}
If $\varphi$ is a $(2b+1,b)$-coloring of the path $P_{n+1}$ such that $n=2m+1$ is an odd integer and $b\ge em$ and $\varphi(0)$ and $\varphi(n)$ are $n$-exactly-compatible, then $\forall k \in \{0,\ldots,n\}$:

$$ c(k) = \left \{ \begin{array}{lllll}
(e(m-s),& b-em,& 0,& es), & \text{if } k=2s \text{ is even};\\
(es,& 0,& b-em, & e(m-s)), & \text{if } k=2s+1 \text{ is odd.} 
\end{array}\right.$$
\end{proposition}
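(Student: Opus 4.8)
The plan is to prove Proposition~\ref{propositionexactlynimpair} by the same bookkeeping strategy used for Proposition~\ref{propositionexactly}, but exploiting the structure of a $(2b+1,b)$-coloring, where $a=2b+1$ and $e=1$, so that $|C(1)|=b-m$, $|C(2)|=|C(3)|=m$, $|C(4)|=b-m+1$ for $n$-exactly-compatible good sets $C_0,C_n$ (recall that for odd $n=2m+1$, $n$-exact-compatibility means $|C_0\cap C_n|=e\frac{n-1}{2}=m$). Note the roles of the canonical blocks are permuted compared to the even case: here $C_0\cap C_n$ is the small block and $C_0\setminus C_n$, $C_1\setminus C_0$ play the role the intersection played before. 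First I would record the constraints $c(j,k)+c(j,k+1)\le |C(j)|$ for all $j$ and $k$, together with the normalization $\sum_j c(j,k)=b$ for every $k$, and the boundary data $c(0)=(m,b-m,0,0)$ (since $\varphi(0)=C_0$ forces $c(1,0)=|C(1)|=m$, wait -- more precisely $c(1,0)+c(2,0)=b$ and $c(3,0)=c(4,0)=0$, and $c(1,0)=|C_0\cap C_n|=m$) and symmetrically $c(n)=(m,0,b-m,0)$ reindexed to the claimed form.

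The key steps, in order: (1) use Theorem~\ref{theoremprincipal} applied to the subpaths $P^k$ (from vertex $0$ to vertex $k$) and $P^{n-k}$ (from vertex $k$ to vertex $n$) to get, for each $k$, that $\varphi(0),\varphi(k)$ are $k$-compatible and $\varphi(k),\varphi(n)$ are $(n-k)$-compatible; since $k$ and $n-k$ have opposite parities (as $n$ is odd), exactly one of these is an "even-type" ($\ge$) bound and the other an "odd-type" ($\le$) bound, which is what pins the values down. (2) For $k=2s$ even: $n-k=2(m-s)+1$ is odd, so $|\varphi(k)\cap\varphi(n)|\le e\frac{(n-k)-1}{2}=m-s$, giving an upper bound that forces $c(1,k)+c(3,k)\le m-s$; combined with $c(1,k)\le |C(1)|=b-m$ and the $k$-compatibility $|\varphi(0)\cap\varphi(k)|=c(1,k)+c(2,k)\ge b-s$, and with the block-size caps, one squeezes $c(1,k)=m-s$, $c(2,k)=b-m$, $c(3,k)=0$, $c(4,k)=s$. (3) For $k=2s+1$ odd: $k$ is odd so $|\varphi(0)\cap\varphi(k)|=c(1,k)+c(2,k)\le \frac{k-1}{2}=s$, and $n-k=2(m-s)$ is even so $|\varphi(k)\cap\varphi(n)|=c(1,k)+c(3,k)\ge b-(m-s)$; then as in the even case of Proposition~\ref{propositionexactly}, use the already-computed values $c(2s)$ and $c(2s+2)$ together with the caps $c(j,2s)+c(j,k)\le|C(j)|$ and $c(j,k)+c(j,2s+2)\le|C(j)|$ to bound each $c(j,k)$ from above, and conclude all bounds are tight because they sum to $b$.

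I would present step (2) fully and then handle step (3) by the "all upper bounds must be equalities since they sum to exactly $b$" trick exactly as in the proof of Proposition~\ref{propositionexactly}; one should double-check the base cases $k=0$ and $k=n$ match the stated formulas ($s=0$ and $s=m$ respectively) and that the monotone alternation $c(2s)\to c(2s+2)$ used in step (3) is genuinely available, i.e. that the even-index values are established before the odd-index ones — so the induction/iteration should run over $k$ in increasing order, computing even indices first. The main obstacle is purely organizational: keeping straight which of the four blocks plays which role (the permutation relative to the even case) and verifying that at each odd $k$ the four cap inequalities coming from the two neighboring even indices, plus the two compatibility inequalities, plus the normalization, together over-determine $c(k)$ so that every inequality is forced to be an equality; there is no conceptual difficulty beyond Proposition~\ref{propositionexactly}, only careful case arithmetic with $e=1$.
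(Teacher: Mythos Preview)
Your approach is essentially the paper's: apply Theorem~\ref{theoremprincipal} to the two subpaths at each vertex (one compatibility of each parity since $n$ is odd), squeeze the even-index values first, then pin the odd-index values via the adjacency caps $c(j,k)+c(j,k\pm1)\le|C(j)|$ and the normalization. One correction: you have the block sizes swapped---since $C(1)=C_0\cap C_n$ and $n$-exact-compatibility gives $|C_0\cap C_n|=em$, the correct sizes (with $e=1$) are $|C(1)|=m$, $|C(2)|=|C(3)|=b-m$, $|C(4)|=m+1$, not what you wrote first; you caught this when computing $c(0)$ but then reverted to the wrong $|C(1)|=b-m$ in step~(2), so fix that before writing out the squeeze (the key cap you need there is $c(2,k)\le|C(2)|=b-m$, which forces $c(1,k)\ge m-s$).
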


\begin{proof}
Let $n=2m+1$ be an odd integer such that $b \geq em$ and let $C_0 , C_n$ be two $n$-exactly-compatible good sets. Then we have by construction $ \vert C(1) \vert = em$, since $\vert C_0 \vert = \vert C_n \vert = b$ then $ \vert C(2) \vert = \vert C(3) \vert = b-em$ , since $\vert C(1) \vert + \vert C(2) \vert + \vert C(3) \vert + \vert C(4) \vert =a$ then $\vert C(4) \vert = e(m+1) $.\\
\begin{itemize}
 \item If $\varphi$ is a $(a,b)$-coloring of the path $P_{n+1}$ such that $\varphi(0)=C_0$ and $\varphi(n)=C_n$, then $\forall k \in \{0,\ldots,n\}$ :\\
If $k=2s$ is even, then by Theorem \ref{theoremprincipal}, then  $\varphi(k)$ and $\varphi(n)$ are $(n-k)$-compatible and thus $\vert \varphi(k) \cap \varphi(n) \vert \leq  e \frac{n-k-1}{2} = e(m-s) $; $\varphi(k)$ and $\varphi(0)$  are $k$-compatible and thus $\vert \varphi(k) \cap \varphi(0) \vert \geq b - e \frac{k}{2} = b - es $. We have $c(1,k)=\vert \varphi(0) \cap \varphi(k) \vert - c(2,k) \geq b-es - \vert \varphi(2) \vert=e(m-s)$. Since  $\vert \varphi(k) \cap \varphi(n) \vert \leq  e(m-s) $ we have $c(1,k) \leq e(m-s)$, then $c(1,k) = e(m-s)$. Therefore $c(2,k) = b-em$ and $c(3,k)=0$. Since $c(1,k)+c(2,k)+c(3,k)+c(4,k)= b$, then $c(4,k)=es$.\\
\item If $k=2s+1$ is odd, by the above properties, we have $c(2s)=(e(m-s),b-em,0,es)$ and $c(2s+2)=(e(m-s-1),b-em,0,e(s+1))$. Since $\vert C(2) \vert = b-em$, then $c(2,k)=0$. Since $\vert C(1) \vert = em$, then $c(1,k) \leq es$. Since $\vert C(4) \vert = e(m+1)$, then $c(4,k) \leq e(m-s)$. Since $c(1,k)+c(2,k)+c(3,k)+c(4,k)= b$, then $c(3,k) \geq b-em = \vert C(3) \vert$, therefore all the inequalities are equalities.
\end{itemize}

\end{proof}

In order to obtain a reciprocal result of Theorem~\ref{theoremprincipal}, we define, for any ordered set $I=\{x_1, \dots , x_f \}$ (with $x_1 < x_2 \dots < x_f$),
 $\first (k,I)= \{ x_1 , \dots , x_k \}$, and  $\last(k,I)= \{ x_{f-k+1} , \dots , x_f \}$. We have the following easy useful fact:
$$\first(k,I) \cap \last(k',I) = \emptyset \iff k+k' \leq \vert I \vert. $$

\begin{lemma}
\label{lemmaalgorithmsatifiesexactly}
If $C_0,C_n$ are good sets such that $C_0$ and $C_n$ are $n$-exactly-compatible, then  a $(a,b)$-coloring $\varphi$ of the path $P_{n+1}$ such that $\varphi(0)=C_0$ and $\varphi(n)=C_n$ can be computed in linear time.
\end{lemma}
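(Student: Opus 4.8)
The statement asserts that, given $n$-exactly-compatible good sets $C_0,C_n$, we can construct in linear time an $(a,b)$-coloring $\varphi$ of $P^n$ with $\varphi(0)=C_0$ and $\varphi(n)=C_n$. The plan is to produce $\varphi$ explicitly by giving, for each vertex $k$, the intersection $\varphi(k)\cap C(j)$ for each block $j\in\{1,2,3,4\}$ of the canonical decomposition, with sizes exactly the vector $c(k)$ prescribed by Proposition~\ref{propositionexactly} (when $n$ is even) or Proposition~\ref{propositionexactlynimpair} (when $n$ is odd, where $e=1$). Since those propositions already tell us that any valid coloring must have these block-sizes, the only freedom — and the only thing to get right — is choosing \emph{which} elements of each $C(j)$ to put in $\varphi(k)$ so that consecutive vertices receive disjoint sets.

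**Key steps.** First I would treat the even case $n=2m$. Inside each block $C(j)$ I would use the $\first/\last$ selectors to alternate: on a block where the size of $\varphi(k)\cap C(j)$ oscillates as $k$ runs over $0,1,\dots,n$, pick $\first(c(j,k),C(j))$ for even $k$ and $\last(c(j,k),C(j))$ for odd $k$ (or the reverse, depending on the block). The disjointness condition $\varphi(k)\cap\varphi(k+1)=\emptyset$ then reduces, block by block, to the elementary fact quoted just before the lemma, namely $\first(p,I)\cap\last(q,I)=\emptyset \iff p+q\le |I|$; so for each $j$ it suffices to check that $c(j,k)+c(j,k+1)\le |C(j)|$, which is exactly the inequality $c(j,k)+c(j,k+1)\le |C(j)|$ already recorded in the text and which one verifies directly from the explicit formulas for $c(k)$ (e.g. for $j=1$ one of the two terms is $0$, for $j=2$ the two terms sum to $e(m-s)+es=em=|C(2)|$, etc.). One also checks $|\varphi(k)|=c(1,k)+c(2,k)+c(3,k)+c(4,k)=b$ and the boundary conditions $\varphi(0)=C_0$, $\varphi(n)=C_n$, both of which are immediate from the formulas. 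The odd case $n=2m+1$ (with $a=2b+1$, $e=1$) is handled identically, using the block-size vectors from Proposition~\ref{propositionexactlynimpair}; again each disjointness check collapses to $c(j,k)+c(j,k+1)\le|C(j)|$.

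**Complexity.** Finally, for the linear-time claim: computing $C(1),\dots,C(4)$ and sorting them costs $O(a)$; then for each of the $n+1$ vertices we output $O(b)$ colors, and each $\first/\last$ call is an $O(b)$ prefix/suffix read from a precomputed sorted array, so the total is $O(a+nb)=O(|V(P^n)|\cdot b)$, linear in the size of the input.

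**Main obstacle.** The conceptual work is minimal once the block decomposition is in hand; the only real care needed is \emph{consistency of the alternation pattern across the four blocks simultaneously} — i.e., making sure that the same parity convention (first on even vertices, last on odd vertices) is compatible in all four blocks at once, since a vertex $k$ must have its \emph{entire} color set disjoint from that of $k+1$, not just block-by-block for a convention chosen per block independently. In fact because the disjointness splits as a conjunction over the four blocks, per-block conventions are fine; the subtlety is purely bookkeeping, namely lining up which blocks are "increasing" and which are "decreasing" in $k$ from the formulas of Propositions~\ref{propositionexactly} and~\ref{propositionexactlynimpair}. I expect this to be the most error-prone part of writing the proof, but not a genuine difficulty.
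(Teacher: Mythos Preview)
Your proposal is correct and follows essentially the same approach as the paper: use the canonical decomposition $C(1),\dots,C(4)$, read off the mandatory block-sizes $c(j,k)$ from Propositions~\ref{propositionexactly} and~\ref{propositionexactlynimpair}, and realize them via $\first/\last$ selectors so that disjointness of consecutive vertices reduces to $c(j,k)+c(j,k+1)\le|C(j)|$. The paper is terser --- it simply writes down the explicit formula for $\varphi(k)$ in each parity case and asserts it works --- whereas you spell out why the verification goes through; but the construction is the same.
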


\begin{proof}
Let $C_0,C_n$ be good $n$-exactly-compatible sets.

If $n=2m$ is even, then by Proposition \ref{propositionexactly}, $\forall k \in \{0,\ldots,n\}$  we set:
$$ \varphi(k)=\left \{ \begin{array}{ll}
             C(1) \cup \first(e(m-s), C(2) ) \cup  \first( es , C(3)) & \text{if } \ k=2s \text{ is even};\\
	    \last( es, C(2) ) \cup \last( e(m-s-1) ,C(3) ) \cup C(4) & \text{if } k=2s+1 \text{ is odd.}
                       \end{array}\right.$$

If $n=2m+1$ is odd, then by Proposition \ref{propositionexactlynimpair}, $\forall k \in \{0,\ldots,n\}$  we set:
$$ \varphi(k)=\left \{ \begin{array}{ll}
            \first( e(m-s) , C(1) ) \cup C(2) \cup \first( es , C(4) )  & \text{if } \ k=2s \text{ is even};\\
	    \last( es , \varphi(1) ) \cup C(3) \cup \last( e(m-s) , C(4) ) & \text{if } k=2s+1 \text{ is odd.}
                       \end{array}\right.$$

Therefore we obtain in both cases in linear time a $(a,b)$-coloring $\varphi$ of $P_{n+1}$ such that $\varphi(0)=C_0$ and $\varphi(n)=C_n$.
\end{proof}

\begin{lemma}
\label{lemmaalgorithmfacilepair}
If $C_0,C_{2n}$ are good sets such that $\vert C_0 \cap C_{2n} \vert \geq b-e$, then a $(a,b)$-coloring $\varphi$ of the path $P_{2n+1}$ such that $\varphi(0)=C_0$ and $\varphi(2n)=C_{2n}$ can be computed in linear time.
\end{lemma}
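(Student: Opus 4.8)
The plan is to give a direct construction that works for every $n\ge 1$ (the case $n=0$ being trivial, since then $P^{2n}$ is a single vertex and $C_0=C_{2n}$). The point to exploit is that the hypothesis $|C_0\cap C_{2n}|\ge b-e$ is precisely what is needed to color a path of length $2$ with these prescribed ends: from $|C_0\cup C_{2n}|=2b-|C_0\cap C_{2n}|\le b+e$ we get that $\{1,\dots,a\}\setminus(C_0\cup C_{2n})$ has at least $a-(b+e)=b$ elements, hence contains a good set $W$ disjoint from both $C_0$ and $C_{2n}$. Any extra compatibility beyond $b-e$ will simply be absorbed by this one vertex, and the remainder of the path can then be finished by a trivial alternating pattern.

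Concretely, I would first color the three vertices $0,1,2$ by $\varphi(0)=C_0$, $\varphi(1)=W$, $\varphi(2)=C_{2n}$, and then propagate a $2$-periodic coloring along the vertices $2,3,\dots,2n$: fix once and for all a good set $B\subseteq\{1,\dots,a\}\setminus C_{2n}$ (which exists since $|\{1,\dots,a\}\setminus C_{2n}|=b+e\ge b$), set $\varphi(k)=C_{2n}$ for every even $k$ with $2\le k\le 2n$, and $\varphi(k)=B$ for every odd $k$ with $3\le k\le 2n-1$. The verification is immediate: the edge $\{0,1\}$ is fine because $W\cap C_0=\emptyset$, the edge $\{1,2\}$ because $W\cap C_{2n}=\emptyset$, and every edge $\{k,k+1\}$ with $k\ge2$ joins a vertex colored $C_{2n}$ to a vertex colored $B$, which are disjoint by the choice of $B$; finally $\varphi(2n)=C_{2n}$ since $2n$ is even. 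All of this (choosing $W$ and $B$, writing down the assignment) is done in $O(b+n)$ time.

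There is no genuine obstacle here; the lemma is the "slack" analogue of the exactly-compatible Lemma~\ref{lemmaalgorithmsatifiesexactly}, and the only thing to be slightly careful about is the small degenerate cases, namely $n=0$ (nothing to do) and $n=1$ (only the vertices $0,1,2$ get colored and the "odd $k\ge 3$" range is empty), both of which are already covered by the description above.
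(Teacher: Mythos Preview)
Your argument is correct and follows essentially the same approach as the paper: both observe that $|C_0\cup C_{2n}|\le b+e$ yields a good set disjoint from $C_0$ and $C_{2n}$, then alternate $C_{2n}$ on even vertices with that set on odd vertices. The only difference is that you introduce a second set $B$ for the odd vertices $\ge 3$, whereas the paper simply reuses the same set $W$ (their $X$) at every odd position---your extra step is harmless but unnecessary, since $W$ already works throughout.
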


\begin{proof}
Let $C_0,C_{2n}$ be good sets such that $\vert C_0 \cap C_{2n} \vert \geq b-e$, then $\vert C_0 \cup C_{2n} \vert \leq 2b - (b-e) = b +e $. Hence there exists a good set $X \subset \{1,\ldots,a\} - C_0 - C_{2n}$. We set $\varphi(0)=C_0$, and $\forall k \in \{1,\ldots,2n \}$, $\varphi(k)=C_{2n}$ if $k$ is even, and $\varphi(k)=X$ otherwise. Therefore this linear time algorithm computes a $(2b+e,b)$-coloring $\varphi$ of the path $P_{2n+1}$ such that $\varphi(0)=C_0$ and $\varphi(2n)=C_{2n}$.
\end{proof}

These lemmas allow to prove the next result which is central in this paper as it will be used several times in the next sections.
\begin{theorem}
\label{theoremalgorithmcoloration}
If $C_0,C_n$ are $n$-compatible sets, then a $(a,b)$-coloring $\varphi$ of the path $P_{n+1}$ such that $\varphi(0)=C_0$ and $\varphi(n)=C_n$ can be computed in linear time.
\end{theorem}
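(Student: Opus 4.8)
The plan is to reduce the general $n$-compatible case to the two special cases already handled by Lemmas~\ref{lemmaalgorithmsatifiesexactly} and~\ref{lemmaalgorithmfacilepair}. The key observation is that $n$-compatibility is an \emph{inequality} on $|C_0\cap C_n|$, while the exactly-compatible case is the extremal (equality) case; and the two regimes --- $n$ even with $b\ge e\lceil n/2\rceil$ versus $n$ large (specifically $n\ge \even(2b/e)$, where $n$-compatibility for even $n$ forces nothing and for odd $n$ is vacuous beyond $|C_0\cap C_n|\le b$) --- must be separated. So first I would split on the parity of $n$ and on whether $b\ge e\,n/2$ (even case) or $b\ge e\,(n-1)/2$ (odd case).

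First, suppose $n$ is even, say $n=2m$, and $b\ge em$. Then $n$-compatibility means $|C_0\cap C_n|\ge b-em$; if equality holds we are done by Lemma~\ref{lemmaalgorithmsatifiesexactly}. If $|C_0\cap C_n| > b-em$, the idea is to ``shrink'' $C_n$ to an auxiliary good set $C_n'$ that \emph{is} $n$-exactly-compatible with $C_0$: remove from $C_0\cap C_n$ the right number of elements and replace them by elements outside $C_0\cup C_n$ (there is room, since a larger intersection means a smaller union), so that $|C_0\cap C_n'| = b-em$ exactly. Apply Lemma~\ref{lemmaalgorithmsatifiesexactly} to get a coloring of $P^{2m}$ with end-sets $C_0$ and $C_n'$. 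Now $C_n'$ and $C_n$ differ only in a controlled way; in fact by construction $|C_n'\cap C_n|\ge b-e$ (the symmetric difference has size at most $2e$), so by Lemma~\ref{lemmaalgorithmfacilepair} applied to a path of even length $2$ one can ``correct'' the last vertex. Concretely: color $P^{n}$ via $C_0,\dots,C_n'$ on vertices $0,\dots,n$, then note $\varphi(n-1)$ is disjoint from $C_n'$; it suffices to check $\varphi(n-1)$ is also disjoint from $C_n$, or to re-run the last two steps using Lemma~\ref{lemmaalgorithmfacilepair} on the subpath from vertex $n-2$ with prescribed ends $\varphi(n-2)$ and $C_n$, which is legitimate provided $|\varphi(n-2)\cap C_n|\ge b-e$. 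Verifying this last inequality from the explicit formula of Proposition~\ref{propositionexactly} is the routine computation I would carry out. (When $n$ is odd the same scheme applies, using Proposition~\ref{propositionexactlynimpair} and Lemma~\ref{lemmaalgorithmsatifiesexactly} in the odd branch, and noting that odd $n$-compatibility, $|C_0\cap C_n|\le e(n-1)/2$, can likewise be adjusted up to equality.)

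Second, the complementary regime $b < em$ (even $n$) or $b< e(n-1)/2$ (odd $n$): here $n$ is large relative to $b/e$, and $n$-compatibility imposes no real constraint --- for even $n$ it only requires $|C_0\cap C_n|\ge b-em$ which is negative, hence automatic, and for odd $n$ only $|C_0\cap C_n|\le e(n-1)/2\ge b$, also automatic. In this case the path is long enough that one can simply walk a short distance from each end into ``generic'' good sets and then connect them by Lemma~\ref{lemmaalgorithmfacilepair}. Precisely: pick a good set $X$ disjoint from $C_0$ and a good set $Y$ disjoint from $C_n$ with $|X\cap Y|\ge b-e$ (possible because $a=2b+e$ leaves enough room, and when $b\le em$, $n\ge\even(2b/e)\ge 2$); color $\varphi(0)=C_0,\varphi(1)=X$, and from vertex $1$ to vertex $n$ use Lemma~\ref{lemmaalgorithmfacilepair} --- one checks the length $n-1$ has the right parity or else absorbs one step into choosing $\varphi(n-1)=Y$ first and connecting $X$ to $Y$ over even length.

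The main obstacle is bookkeeping the parities: Lemma~\ref{lemmaalgorithmfacilepair} only handles \emph{even}-length paths, so every time I ``correct an endpoint'' or ``bridge two generic sets'' I must make sure the residual subpath has even length, which sometimes forces one extra hand-placed vertex and a separate small check that the relevant two sets are $2$-compatible (i.e.\ have intersection $\ge b-e$). Apart from that, all steps are linear-time concatenations of the linear-time subroutines of the two lemmas, so the complexity claim is immediate once the case analysis is arranged.
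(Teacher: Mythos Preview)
Your high-level plan --- reduce to Lemma~\ref{lemmaalgorithmsatifiesexactly} for an exactly-compatible piece and finish with Lemma~\ref{lemmaalgorithmfacilepair} --- is exactly the paper's strategy. But your implementation of the first regime has a genuine gap. You apply the exactly-compatible coloring on the \emph{full} length $n$, producing an auxiliary end-set $C_n'$, and then assert that $|C_n'\cap C_n|\ge b-e$ (or alternatively $|\varphi(n-2)\cap C_n|\ge b-e$). This fails whenever $j:=|C_0\cap C_n|-(b-em)>e$. Indeed, to force $|C_0\cap C_n'|=b-em$ you must remove $j$ elements of $C_0\cap C_n$ from $C_n$, so $|C_n\cap C_n'|=b-j$, and the ``routine computation'' via Proposition~\ref{propositionexactly} shows $|\varphi(n-2)\cap C_n|\le b-em+\min(e,j)+\min(e(m-1),em-j)$, which equals $b-j<b-e$ when $j>e$. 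So neither correction works. Your second regime is also shaky: the sets $X,Y$ you ask for (disjoint from $C_0$, resp.\ $C_n$, with $|X\cap Y|\ge b-e$) need not exist when $|C_0\cap C_n|$ is small.

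The paper sidesteps both problems with one clean idea: do \emph{not} run the exactly-compatible coloring all the way to $n$. Instead, let $k$ be the unique integer of the same parity as $n$ with $b-e\lfloor k/2\rfloor\le |C_0\cap C_n|<b-e\lfloor k/2\rfloor+e$ (even case; the odd case is analogous). Then $k\le n$ by $n$-compatibility, and one can build a good set $C_k$ containing all of $C_n-C_0$ plus the right amount of $C_0\cap C_n$ so that simultaneously $C_0,C_k$ are $k$-\emph{exactly}-compatible and $|C_k\cap C_n|\ge b-e$. Lemma~\ref{lemmaalgorithmsatifiesexactly} handles $[0,k]$ and Lemma~\ref{lemmaalgorithmfacilepair} handles $[k,n]$. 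The point is that $k$ depends on $|C_0\cap C_n|$, not on $n$; this single choice replaces your two-regime split and makes the endpoint correction trivial rather than something to be verified.
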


\begin{proof}
Let $C_0,C_n$ be good $n$-compatible sets.

If $n$ is even, let $k=2s \in \{0,\ldots,n\}$ be an even integer such that $b-es \leq \vert C_0 \cap C_n \vert < b-es + e$. Let $X \subset C_0 \cap C_n$ such that $\vert X \vert = b-es$ ; and $Y=C_n - C_0$, then $es \geq \vert Y \vert >es - e$. Let $Z \subset \{1,\ldots,a\} - C_0 - C_n$ such that $\vert Z \vert = es - \vert Y \vert$. We choose $C_k=X \cup Y \cup Z$, then $C_k$ is a good set and $\vert C_0 \cap C_k \vert = b-es$, therefore by Lemma \ref{lemmaalgorithmsatifiesexactly}, a $(a,b)$-coloring $\varphi$ of the path $P^k$ such that $\varphi(0)=C_0$ and $\varphi(k)=C_k$ can be computed in linear time. By construction $\vert C_k \cap C_n \vert > b-e$ and $n-k$ is even, then by Lemma \ref{lemmaalgorithmfacilepair}, we can complete the coloring for the rest of the path (between $v_k$ and $v_n$) in linear time. 

If $n$ is odd, let $k=2s+1 \in \{0,\ldots,n\}$ be an odd integer such that $es \leq \vert C_0 \cap C_n \vert < es + e$. Let $X \subset C_0 \cap C_n$ such that $\vert X \vert = es$; and $Y \subset C_n - C_0$ such that  $ \vert Y \vert =b- es - e$. Let $Z \subset \{1,\ldots,a\} - C_0 - C_n$ such that $\vert Z \vert = e$. We choose $C_k=X \cup Y \cup Z$, then $C_k$ is a good set and $\vert C_0 \cap C_k \vert = es$, therefore by Lemma \ref{lemmaalgorithmsatifiesexactly}, a $(a,b)$-coloring $\varphi$ of a path $P^k$ such that $\varphi(0)=C_0$ and $\varphi(k)=C_k$ can be computed in linear time. By construction $\vert C_k \cap C_n \vert = b-e$ and $n-k$ is even, then by  Lemma \ref{lemmaalgorithmfacilepair}, we can complete the coloring for the rest of the path (between $v_k$ and $v_n$) in linear time. 
\end{proof}

If $C_0,C_n$ are good sets such that $n \geq \even( \frac{2b}{e} ) $, then they are $n$-compatible and therefore we have the following result:

\begin{corollary}
\label{corollarytheoremreciproquealgorithmnsupeven2bsure}
If $C_0,C_n$ are good sets such that $n \geq \even(\frac{2b}{e}) $, then a $(a,b)$-coloring $\varphi$ of the path $P_{n+1}$ such that $\varphi(0)=C_0$ and $\varphi(n)=C_n$ can be computed in linear time.
\end{corollary}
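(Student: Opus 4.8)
The plan is to obtain the corollary as an immediate consequence of Theorem~\ref{theoremalgorithmcoloration}: the only thing to check is that the hypothesis $n\ge\even(2b/e)$ already forces \emph{any} two good sets $C_0,C_n$ to be $n$-compatible, after which the theorem supplies the desired linear-time coloring.

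So the first (and essentially only) step is to prove the claim: if $C_0,C_n$ are good sets and $n\ge\even(2b/e)$, then $C_0$ and $C_n$ are $n$-compatible. I would argue by cases on the parity of $n$. If $n$ is even, $n$-compatibility requires $\vert C_0\cap C_n\vert\ge b-e\frac{n}{2}$; since $\even(2b/e)\ge 2b/e$ we have $n\ge 2b/e$, hence $b-e\frac{n}{2}\le 0\le\vert C_0\cap C_n\vert$ and the inequality holds trivially. If $n$ is odd, $n$-compatibility requires $\vert C_0\cap C_n\vert\le e\frac{n-1}{2}$; here I would use that $\even(2b/e)$ is by definition an even integer, so an odd $n$ satisfying $n\ge\even(2b/e)$ in fact satisfies $n\ge\even(2b/e)+1$, whence $n-1\ge\even(2b/e)\ge 2b/e$, i.e.\ $e\frac{n-1}{2}\ge b$. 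Since $C_0$ and $C_n$ are good sets, $\vert C_0\cap C_n\vert\le b\le e\frac{n-1}{2}$, as needed.

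Given the claim, the corollary follows in one line: $C_0$ and $C_n$ are $n$-compatible, so by Theorem~\ref{theoremalgorithmcoloration} an $(a,b)$-coloring $\varphi$ of $P^n$ with $\varphi(0)=C_0$ and $\varphi(n)=C_n$ can be computed in linear time. I do not expect any genuine obstacle; the single point that deserves care is the odd case, where one must exploit that $\even(2b/e)$ is even to promote the bound $n-1\ge 2b/e-1$ (which would be too weak, giving only $e\frac{n-1}{2}\ge b-\frac{e}{2}$) to the correct $n-1\ge 2b/e$.
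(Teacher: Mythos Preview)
Your proof is correct and follows exactly the route the paper takes: the paper simply states before the corollary that any two good sets are automatically $n$-compatible when $n\ge\even(2b/e)$, and then invokes Theorem~\ref{theoremalgorithmcoloration}. You have merely filled in the parity case analysis that the paper leaves implicit, including the small but necessary observation in the odd case that $n\ge\even(2b/e)$ forces $n-1\ge\even(2b/e)$.
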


\begin{proposition}
If a graph $G$ contains a parity handle $H$ and $G-\int(H)$ is $(a,b)$-colorable, then $G$ is $(a,b)$-colorable.
\end{proposition}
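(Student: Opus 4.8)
The plan is to recognize this proposition as essentially a corollary of the path-coloring results already established, namely Theorems~\ref{theoremprincipal} and~\ref{theoremalgorithmcoloration}. Write $H=PP(n)$ for the parity handle, with end-vertices $u$ and $v$; by the definition of a parity handle there is a path $Q$ between $u$ and $v$ of some length $m\le n$ with $m\equiv n\pmod 2$ inside $G-\int(H)$. Fix a $(a,b)$-coloring $\varphi$ of $G-\int(H)$. Since the interior vertices of $H$ have degree $2$ in $G$, extending $\varphi$ to $G$ amounts precisely to assigning color-sets to $\int(H)$ so that $H$, which is an induced path $P^n$, becomes properly $(a,b)$-colored with its prescribed end-colors $\varphi(u)$ and $\varphi(v)$.

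First I would use the path $Q$. Its consecutive vertices are adjacent in $G-\int(H)$, hence receive disjoint color-sets under $\varphi$, so the restriction of $\varphi$ to the vertex sequence of $Q$ is a genuine $(a,b)$-coloring of $P^m$ (a possible chord of $Q$ only imposes extra disjointness constraints, which are already met). Applying Theorem~\ref{theoremprincipal} to this coloring yields that the good sets $\varphi(u)$ and $\varphi(v)$ are $m$-compatible.

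The only real computation is then to upgrade $m$-compatibility to $n$-compatibility, using $m\le n$ and $m\equiv n\pmod 2$. When $n$ is even, the threshold $b-ek/2$ in the definition of $k$-compatibility is non-increasing in $k$, so $|\varphi(u)\cap\varphi(v)|\ge b-em/2\ge b-en/2$; when $n$ is odd, the threshold $e(k-1)/2$ is non-decreasing in $k$, so $|\varphi(u)\cap\varphi(v)|\le e(m-1)/2\le e(n-1)/2$. In both cases $\varphi(u)$ and $\varphi(v)$ are $n$-compatible. Finally I would invoke Theorem~\ref{theoremalgorithmcoloration} with $C_0=\varphi(u)$ and $C_n=\varphi(v)$ to obtain a $(a,b)$-coloring of $P^n$ with these end-colors, and transplant it onto $H$; this extends $\varphi$ to all of $G$.

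I do not expect a genuine obstacle: the argument is a short reduction plus the monotonicity observation. The only points deserving a line of care are the degenerate case $u=v$ (which forces $n$ even and $m=0$, so that $n$-compatibility reads $b\ge b-en/2$, true, and is in any case covered by the monotonicity step) and the remark that $Q$ need not be an induced path, which is harmless for the application of Theorem~\ref{theoremprincipal}.
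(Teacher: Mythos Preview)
Your argument is correct, but it takes a different route from the paper. The paper's proof is entirely elementary and bypasses the compatibility machinery: it simply copies the colors of the auxiliary path $P'=(v'_0,\ldots,v'_{n'})$ onto the first $n'+1$ vertices of $H$ via $\varphi(v_k)=\varphi(v'_k)$, and then fills in the remaining $n-n'$ vertices by alternating the last two color sets $\varphi(v'_{n'})$ and $\varphi(v'_{n'-1})$. Since $n-n'$ is even, this lands on $\varphi(v'_{n'})=\varphi(v_n)$ as required.

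Your approach instead funnels everything through Theorems~\ref{theoremprincipal} and~\ref{theoremalgorithmcoloration}, with the clean observation that $k$-compatibility is monotone in $k$ along a fixed parity. This is a perfectly valid reduction and nicely exhibits the proposition as an instance of the general necessary-and-sufficient criterion; the cost is that it invokes the heavier Theorem~\ref{theoremalgorithmcoloration}, whereas the paper's direct copying argument needs nothing beyond the definitions. Your side remarks on non-induced $Q$ and the degenerate case are harmless (and in fact $u\neq v$ since $H$ is a path).
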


\begin{proof}
Let $H=(v_0,\ldots,v_n)$ be a parity handle of length $n$ in $G$ such that there exists a $(a,b)$-coloring $\varphi$ of $G - \int(H)$.
Then, by definition, there exists another path $P'=(v'_0=v_0, v'_1,\ldots, v'_{n'}=v_n)$ in $G- \int(H)$ with $n\ge n'$ and $n$ and $n'$ having the same parity.
We extend the coloring $\varphi$ by setting $\forall k \in \{0,\ldots,n'\}, \varphi(v_k)=\varphi(v'_k)$, and $\forall k \in \{n'+1,\ldots,n\}$, $\varphi(v_k)=\varphi(v'_{n'})$ if $k-n'$ is even and $\varphi(v_k)=\varphi(v'_{n'-1})$ otherwise. 
\end{proof}

This proposition along with Corollary~\ref{corollarytheoremreciproquealgorithmnsupeven2bsure} allows to prove Theorem~\ref{ThmP}.

\section{Properties of $(2b+1,b)$-colorings of a path}
This section contains a series of lemmas presenting properties on the color sets of some interior vertices of a path with a given $(2b+1,b)$-coloring. These lemmas will be used in Section~\ref{sec3} to reduce the number of cases to consider when proving the reducibility of some H-handles.

Remember that the vertices of a path $P_{n+1}$ are denoted by $v_0, v_1,\ldots, v_n$ and that for any two good sets $C_0 , C_n$, 
$C(1)=C_0 \cap C_n \ , \ C(2)=C_0 - C_n \ , \  C(3)=C_n - C_0 \ , \ C(4)=\{1,\ldots,a\} - C_0 - C_n$.

\begin{lemma}
\label{lemmaP4}
For any  $(2b+1,b)$-coloring $\varphi$ of $P_{5}$, there exist four distinct good sets $X_1$, $X_2$, $X_3$ and $X_4$, such that
\begin{description}
 \item[(i)] $\vert X_1 \cap X_2 \vert = \vert X_1 \cap X_3 \vert =b-1$, $\vert X_1 \cap X_4 \vert = b-2$, and
\item[(ii)] for each $i\in\{1,2,3,4\}$, there exists a $(2b+1,b)$-coloring $\varphi'$ of $P_{5}$ such that $\varphi'(v_0)=\varphi(v_0)$,  $\varphi'(v_4)=\varphi(v_4)$ and $ \varphi'(v_2) =X_i$.
\end{description}
\end{lemma}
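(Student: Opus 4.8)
Fix a $(2b+1,b)$-coloring $\varphi$ of $P^4$, write $C_0=\varphi(v_0)$, $C_4=\varphi(v_4)$, and let $C(1),C(2),C(3),C(4)$ be the canonical decomposition of $\{1,\ldots,2b+1\}$ with respect to $C_0,C_4$. Since $\varphi$ is a $(2b+1,b)$-coloring of $P^4$, Theorem~\ref{theoremprincipal} gives that $C_0$ and $C_4$ are $4$-compatible, i.e. $|C(1)|=|C_0\cap C_4|\ge b-2e$ with $e=1$, so $|C(1)|\ge b-2$; set $t=|C(1)|\in\{b-2,b-1,b\}$. The natural candidate for $X_2$ (resp. $X_3$) is a good set meeting $X_1$ in $b-1$ colors, and for $X_4$ one meeting $X_1$ in $b-2$ colors; the plan is to take $X_1$ to be a ``central'' set that is both $2$-compatible with $C_0$ and $2$-compatible with $C_4$ (so it genuinely occurs as $\varphi(v_2)$ for some coloring extending the precolored ends, via Theorem~\ref{theoremalgorithmcoloration} applied to the two halves $P^2$), and then perturb it in one color at a time to obtain $X_2,X_3,X_4$, checking that each perturbed set is still simultaneously $2$-compatible with $C_0$ and with $C_4$.

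\textbf{Construction of the four sets.} First I would produce $X_1$: choose $X_1\subseteq C(2)\cup C(3)\cup C(4)$ with $|X_1\cap C(2)|=|X_1\cap C(3)|$ as balanced as possible and $|X_1\cap C(1)|=0$ when $t$ is small, adjusting so that $|X_1\cap C_0|$ and $|X_1\cap C_4|$ both equal the value $b-1$ forced by $2$-exact-compatibility in the even case (Proposition~\ref{propositionexactly} with $m=1$, $s$ ranging). Concretely, when $t=|C(1)|=b-2$ one wants $\varphi(v_2)$ disjoint from $C(1)$ and with exactly one element in $C(2)$, one in $C(3)$, and the remaining $b-2$ in $C(4)$; this is exactly the middle color set prescribed by Proposition~\ref{propositionexactly}. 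Having fixed such an $X_1$, I would obtain $X_2$ by swapping one element of $X_1\cap C(4)$ for the unused element of $C(3)$ (this keeps $|X_2\cap C_4|=b-1$, raises $|X_2\cap C_0|$... — more carefully, one swaps inside $C_0\cup C_4$ appropriately), so that $|X_1\cap X_2|=b-1$; $X_3$ by the symmetric swap on the $C(2)$ side; and $X_4$ by performing both swaps at once, giving $|X_1\cap X_4|=b-2$ and, since the two swaps are independent, $X_2,X_3,X_4$ pairwise distinct and distinct from $X_1$. In each case the key point is to verify the two compatibility inequalities $|X_i\cap C_0|\ge b-1$ and $|X_i\cap C_4|\ge b-1$ (the even $2$-compatibility condition), because then Theorem~\ref{theoremalgorithmcoloration} colors $P^2$ from $C_0$ to $X_i$ and $P^2$ from $X_i$ to $C_4$, and concatenating yields the desired $\varphi'$ with $\varphi'(v_2)=X_i$. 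Part (i) is then immediate from the swap count.

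\textbf{Main obstacle.} The delicate part is handling the three regimes $|C(1)|\in\{b,b-1,b-2\}$ uniformly and making sure that in the tight case $|C(1)|=b-2$ there is genuinely enough room in $C(4)$ (which has size $a-|C_0\cup C_4|=2b+1-(2b-(b-2))=b-1$) to carry out both swaps while keeping the four sets distinct — one must check $b$ is large enough, which is where the hypothesis $b\ge 2$ (more comfortably $b\ge 3$) enters; for very small $b$ the four ``good sets'' might collide or a swap target might be empty, so I would treat $b=2$ (and possibly $b=3$) as an explicit base case. A secondary subtlety is that ``distinct'' in the statement is a real requirement: I must ensure the two one-element swaps move to genuinely different color classes, which holds because one swap lands a new color from $C(3)$ and the other a new color from $C(2)$, and $C(2),C(3)$ are disjoint by construction. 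Once these bookkeeping points are settled, both (i) and (ii) follow by direct inspection, with no computation beyond counting intersections.
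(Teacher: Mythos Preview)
Your overall strategy matches the paper's: split into the three cases $|C(1)|\in\{b,b-1,b-2\}$, exhibit four good sets each $2$-compatible with both $C_0$ and $C_4$, and invoke Theorem~\ref{theoremalgorithmcoloration} on the two halves. That is exactly what the paper does.

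However, your concrete construction in the tight case $|C(1)|=b-2$ is wrong, and the error propagates through the swaps. You propose $X_1$ with $|X_1\cap C(1)|=0$, one element in each of $C(2),C(3)$, and $b-2$ elements in $C(4)$, citing Proposition~\ref{propositionexactly}. But that proposition, for $n=4$, $e=1$, $m=2$, $k=2s=2$, gives $c(2)=(b-2,1,1,0)$, i.e.\ the middle set must take $b-2$ colors from $C(1)$ and \emph{none} from $C(4)$ --- the opposite of what you wrote. With your $X_1$ one gets $|X_1\cap C_0|=|X_1\cap C(2)|=1$, which fails the $2$-compatibility bound $|X_1\cap C_0|\ge b-1$ that you yourself identify as the key check. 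Consequently your swap rules (``swap an element of $X_1\cap C(4)$ for the unused element of $C(3)$'') are built on a non-viable $X_1$; and even on their own terms they don't preserve the intersections as you claim (replacing a $C(4)$-element by a $C(3)$-element increases $|X_2\cap C_4|$ rather than keeping it at $b-1$).

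The fix is simple once the partition is read correctly: when $|C(1)|=b-2$ one has $|C(2)|=|C(3)|=2$, and the four sets $C(1)\cup\{y\}\cup\{z\}$ with $y\in C(2)$, $z\in C(3)$ are exactly the $X_i$'s --- each has $|X_i\cap C_0|=|X_i\cap C_4|=b-1$, and choosing $X_1,X_4$ to use disjoint $\{y,z\}$-pairs gives $|X_1\cap X_4|=b-2$. This is precisely the paper's Case~3. The cases $|C(1)|=b$ and $|C(1)|=b-1$ also need explicit (and slightly different) choices, since there the ``swap two elements of $C(2),C(3)$'' picture degenerates; the paper handles them by allowing $X_2=C_0$ and bringing in elements of $C(4)$. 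Your worry about small $b$ is not really the issue --- the construction works for all $b\ge2$ once the roles of $C(1)$ and $C(4)$ are set straight.
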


\begin{proof}
Let $\varphi$ be a $(2b+1,b)$-coloring of $P_{5}$ and let $C_0=\varphi(v_0)$ and $C_4=\varphi(v_4)$. By Theorem~\ref{theoremprincipal}, $|C(1)|=|C_0 \cap C_4|\ge b-2$. Thus we have three cases to consider:
\begin{description}
\item[Case 1:] $\vert C(1) \vert = b$. We choose any $Y \subset C(1)$, any $Y_1 \subset C_0 - Y$, $Y_2=C_0 - Y - Y_1$, any $Z,Z' \subset \{1,\ldots,a\} - C_0$, such that $\vert Y \vert =b-2$, and $\vert Z \vert =\vert Z' \vert =\vert Y_1 \vert =\vert Y_2 \vert =1$. Then we set $X_1=Y \cup Y_1 \cup Z$, $X_2=C_0$, $X_3=Y \cup Y_1 \cup Z'$, and $X_4=Y \cup Y_2 \cup Z'$.
\item[Case 2:] $\vert C(1) \vert = b-1$. We choose any $Y \subset C(1)$, and any $Z,Z' \subset \{1,\ldots,a\} - C_0 - C_4$, such that $\vert Y \vert =b-2$, and $\vert Z \vert  =\vert Z' \vert  =1$, and we set $X_1=C(1) \cup Z$, $X_2=C_0$, $X_3=C(1) \cup Z'$ and $X_4=Y \cup (C_0 - C(1)) \cup (C_4 - C(1))$.
\item[Case 3:] $\vert C(1) \vert = b-2$. Then we note $C(2)=Y_2 \cup Y'_2$ and $C(3)=Y_3 \cup Y'_3$, with $\vert Y_i |Y'_i|= \vert =1$, for $i=1,2$, and we set $X_1=D_1 \cup Y_2 \cup Y_3$, $X_2=C(1) \cup Y_2 \cup Y'_3$, $X_3=C(1) \cup Y'_2 \cup Y_3$, and $X_4=C(1) \cup Y'_2 \cup Y'_3 $.
\end{description}
For each case and each $i\in\{1,2,3,4\}$, the pairs $(C_0,X_i)$ and $(X_i, C_4)$ are both $2$-compatible and thus Theorem \ref{theoremalgorithmcoloration} allows to conclude.
\end{proof}

\begin{lemma}
\label{lemmaP5}
For any  $(2b+1,b)$-coloring $\varphi$ of $P_6$, there exist  two distinct sets $X$ and $Y$ such that $\vert X \vert =\vert Y \vert = 2$ and for any $x\in X$ and $y\in Y$ there exists a $(2b+1,b)$-coloring $\varphi'$ of $P^5$ such that $\varphi'(v_0)=\varphi(v_0)$,  $\varphi'(v_5)=\varphi(v_5)$ and $\{x,y\} \subset\varphi'(v_2)$. 
\end{lemma}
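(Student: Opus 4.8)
\textbf{Proof proposal for Lemma~\ref{lemmaP5}.}
The plan is to mimic the structure of the proof of Lemma~\ref{lemmaP4}, reading off the possible values of $|C_0 \cap C_5|$ from Theorem~\ref{theoremprincipal} and then, in each case, exhibiting the two $2$-element sets $X,Y$ explicitly. Set $C_0=\varphi(v_0)$, $C_5=\varphi(v_5)$; by Theorem~\ref{theoremprincipal}, $\varphi(v_0)$ and $\varphi(v_5)$ are $5$-compatible, so $|C_0\cap C_5|\le 2e=2$ when $e=1$, i.e.\ $|C(1)|\in\{0,1,2\}$ (and the case $|C(1)|=2$ only arises when $b\ge $ the relevant bound). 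Recall the canonical decomposition $C(1)=C_0\cap C_5$, $C(2)=C_0-C_5$, $C(3)=C_5-C_0$, $C(4)=\{1,\ldots,a\}-C_0-C_5$, with $|C(2)|=|C(3)|=b-|C(1)|$ and $|C(4)|=1+|C(1)|$.

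The key point is the target vertex $v_2$: for any good set $C_2$, the pair $(C_0,C_2)$ must be $2$-compatible (so $|C_0\cap C_2|\ge b-1$) and the pair $(C_2,C_5)$ must be $3$-compatible (so $|C_2\cap C_5|\le 1$); conversely, whenever both hold, Theorem~\ref{theoremalgorithmcoloration} lets us realize $C_2$ as $\varphi'(v_2)$ in a $(2b+1,b)$-coloring extending the prescribed ends. So I want to choose $X,Y$ of size $2$ such that for every $x\in X,\ y\in Y$ the set $C_2$ built from $x$ and $y$ meets these two constraints. The natural construction: take $C_2$ to contain $C(1)$, most of $C(2)$, and exactly one "free" color; varying which color of $C(2)$ is dropped and which free color is inserted gives the two degrees of freedom. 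Concretely, in the generic case $|C(1)|=1$, write $C(2)=\{p,p'\}\cup R$ with $|R|=b-3$ and pick $C(4)=\{q,q'\}$ (possible since $|C(4)|=2$); set $X=\{p,p'\}$ and $Y=\{q,q'\}$, and for $x\in X,y\in Y$ let $C_2=C(1)\cup R\cup (C(2)\setminus\{x\})\cup\{\text{one more color of }C(2)\}$... — more cleanly, let $C_2 = (C_0\setminus\{x\})\cup\{y\}$; then $|C_0\cap C_2|=b-1$ and $C_2\cap C_5 = (C(1)\setminus\{x\})\cup(\{y\}\cap C_5)=C(1)$ (as $y\in C(4)$ is disjoint from $C_5$ and $x\in C(2)$ is already outside $C_5$), so $|C_2\cap C_5|=|C(1)|=1\le 1$, as required. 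The same recipe $C_2=(C_0\setminus\{x\})\cup\{y\}$ works when $|C(1)|=2$ provided we can still find $Y\subseteq C(4)$ with $|Y|=2$ and $X\subseteq C(2)$ with $|X|=2$, which holds since $|C(2)|=b-2\ge 2$ and $|C(4)|=3\ge 2$ under the hypothesis $b\ge 4$; and when $|C(1)|=0$ we instead need $|C_2\cap C_5|\le 1$, which forces dropping from $C_0$ a color disjoint from $C_5$ (automatic, since $C(1)=\emptyset$) and we may even take $X=Y$ any $2$-subset of $C(4)$ after first moving one color — a tiny bit more care is needed here. Finally I must double-check that $X\neq Y$ as sets: since $X\subseteq C(2)\subseteq C_0$ and $Y\subseteq C(4)$ is disjoint from $C_0$ in the main case, $X\cap Y=\emptyset$, so they are distinct (they are even disjoint), matching the statement's requirement that they be distinct.

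The main obstacle is the bookkeeping in the boundary cases, especially $|C(1)|=0$ (where $C(4)$ has size only $1$ if $b$ is small, though here $|C(4)|=1$ is wrong: $|C(4)|=1+|C(1)|=1$, so there is only one free color, and the construction $C_2=(C_0\setminus\{x\})\cup\{y\}$ degenerates) and the edge of validity of $|C(1)|=2$. In the $|C(1)|=0$ case one has $|C_0\cup C_5|=2b$ and $|C(4)|=1$, so one cannot pick two free colors; instead one should keep $C_2$ inside $C_0\cup(\text{the single free color})$ and let the two-parameter freedom come entirely from choosing which two colors of $C_0$ to replace — i.e.\ take $X$ a $2$-subset of $C_0$ that will be removed and $Y$... this requires rethinking, and is where a short separate argument (or a reduction noting $b-e=b-1\ge 2$ forces $|C(1)|\ge 1$ is false — it does not) is needed. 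I expect the cleanest writeup is to handle $|C(1)|\ge 1$ uniformly by the substitution recipe above and treat $|C(1)|=0$ by a direct ad hoc choice, after which condition (ii) follows in every case from Theorem~\ref{theoremalgorithmcoloration} applied to the two subpaths $P^2$ and $P^3$.
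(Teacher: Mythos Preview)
Your construction has a fundamental mismatch with what the lemma requires. You set $C_2=(C_0\setminus\{x\})\cup\{y\}$, but then $x\notin C_2$, whereas the lemma demands $\{x,y\}\subset\varphi'(v_2)$: both $x$ and $y$ must be \emph{present} in the colour set of $v_2$. You are treating $x$ as the colour to drop and $y$ as the colour to add, but the sets $X$ and $Y$ in the statement are meant to consist of colours guaranteed to appear in $C_2$, not to parametrise the substitution. (A secondary symptom: your same recipe in the case $|C(1)|=2$ gives $|C_2\cap C_5|=|C(1)|=2>1$, so $3$-compatibility fails anyway; and your $|C(1)|=0$ case is left unresolved.)

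The fix is close to what you already wrote but with the roles reversed. Fix a set $I\subset C_0$ of size $b-2$ (taking $I\subset C(2)$ when $|C(1)|\ge 1$, and $I$ arbitrary when $|C(1)|=0$), put $X=C_0\setminus I$ (two colours of $C_0$, containing $C(1)$ when $|C(1)|\ge 1$), and put $Y$ a $2$-subset of $\{1,\dots,a\}\setminus C_0$ (inside $C(4)$ when $|C(1)|\ge 1$). For $x\in X$, $y\in Y$ set $C_2=I\cup\{x,y\}$. Now $\{x,y\}\subset C_2$ by construction, $|C_0\cap C_2|=b-1$, and $|C_2\cap C_5|\le 1$ in every case (when $|C(1)|\ge 1$ only the element $x$ can lie in $C_5$; when $|C(1)|=0$ only $y$ can), so Theorem~\ref{theoremalgorithmcoloration} applies. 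This is exactly the paper's argument; note that the $|C(1)|=0$ case causes no trouble because $Y$ is allowed to meet $C(3)$ there, so $|C(4)|=1$ is not an obstacle.
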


\begin{proof}
Let $\varphi$ be a $(2b+1,b)$-coloring of $P_6$ and let $C_0=\varphi(v_0)$ and $C_5=\varphi(v_5)$. By Theorem~\ref{theoremprincipal}, $\vert C(1) \vert \leq 2$ and thus $\vert C(2) \vert \geq b-2$.
\begin{description}
\item[Case 1:] $\vert C(1) \vert = 0$. We choose any $I \subset C_0$, $X=C_0 - I$ and any $ Y \subset \{1,\ldots,a\} - C_0$, such that $\vert I \vert =b-2$, and $\vert Y \vert =2$. 
\item[Case 2:] $\vert C(1) \vert \geq 1$. We choose any $I \subset  C(2)$, $X=C_0 - I$ and $Y \subset C(4)$ such that $\vert I \vert =b-2$ and $\vert Y \vert =2$.
\end{description}
For each case and each $x\in X, y\in Y$, by construction, $C_0, I\cup\{x,y\}$ are 2-compatible and $I\cup\{x,y\}, C_5$ are 3-compatible, hence Theorem \ref{theoremalgorithmcoloration} allows to conclude.
\end{proof}

\begin{lemma}
\label{lemmaP2n+1un}
For any $(2b+1,b)$-coloring $\varphi$ of $P_{2n+2}$ with $n \leq b - 1$, there exist  two distinct sets $X$ and $Y$, such that $\vert X \vert  =b-n$ and $\vert Y \vert = n+1$, and for any $Y' \subset Y$ such that $\vert Y' \vert = n$, there exists a $(2b+1,b)$-coloring $\varphi'$ of $P_{2n+2}$ such that $\varphi'(v_0)=\varphi(v_0)$,  $\varphi'(v_{2n+1})=\varphi(v_{2n+1})$ and $ \varphi'(v_1) = X \cup Y'$. 
\end{lemma}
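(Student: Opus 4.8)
The plan is to fix $C_0=\varphi(v_0)$ and $C_{2n+1}=\varphi(v_{2n+1})$, set up the canonical decomposition $C(1),\dots,C(4)$, and use Theorem~\ref{theoremprincipal} to bound $|C(1)|=|C_0\cap C_{2n+1}|$. Since $P^{2n+1}$ has odd length $2n+1$, the theorem gives $|C(1)|\le e\frac{(2n+1)-1}{2}=n$ (here $e=1$ since we are in the $(2b+1,b)$ case). The idea is that the color set of $v_1$ in any $(2b+1,b)$-coloring must, by Theorem~\ref{theoremprincipal} applied to the subpath $P^{2n}$ between $v_1$ and $v_{2n+1}$, intersect $C_{2n+1}$ in at least $b-n$ colors, and it must be disjoint from $C_0$. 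So the "stable part" $X$ of $\varphi'(v_1)$ should be a fixed $(b-n)$-subset of $C_{2n+1}-C_0=C(3)$ (which has size $\ge b-n$ because $|C(1)|\le n$ forces $|C(3)|\ge b-n$), while the "free part" $Y$ is an $(n+1)$-set chosen so that any $n$-subset $Y'$ can be appended to $X$ and still extended.

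Concretely, I would first handle the two ranges of $|C(1)|$ separately (or, better, reduce everything to a single construction). Choose $X\subset C(3)$ with $|X|=b-n$; this is possible since $|C(3)|=b-|C(1)|\ge b-n$. For $Y$, I would take $Y$ to be an $(n+1)$-subset of $\{1,\dots,a\}-C_0-X$; note $|\{1,\dots,a\}-C_0| = b+1$ and $X$ is disjoint from $C_0$, so this complement has size $b+1-(b-n)=n+1$, giving exactly enough room — in fact $Y$ is forced to be $\{1,\dots,a\}-C_0-X$. Then for any $Y'\subset Y$ with $|Y'|=n$, set $C_1=X\cup Y'$: this is a good set ($|C_1|=b-n+n=b$), it is disjoint from $C_0$ (both $X$ and $Y'$ avoid $C_0$), hence $C_0,C_1$ are $1$-compatible; and $|C_1\cap C_{2n+1}|\ge |X\cap C_{2n+1}| = |X| = b-n$ since $X\subset C(3)\subset C_{2n+1}$, so $C_1,C_{2n+1}$ are $2n$-compatible by the even-case inequality ($b-n\ge b-1\cdot n$). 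By Theorem~\ref{theoremalgorithmcoloration} the edge $v_0v_1$ together with the path $v_1,\dots,v_{2n+1}$ can be colored, yielding the desired $\varphi'$.

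The one point that needs care — and which I expect is the main obstacle — is verifying that $X$ and $Y$ are \emph{distinct} as sets and that the counting works out exactly on the boundary, in particular when $|C(1)|=n$ (so $C(3)$ has size exactly $b-n$ and $X=C(3)$ is forced) versus $|C(1)|<n$ (where there is slack in choosing $X\subset C(3)$, and one must make sure the specific choice of $X$ does not accidentally make $Y$ fail to be $(n+1)$-sized or coincide with $X$). Since $|X|=b-n$ and $|Y|=n+1$, these have the same size only when $b-n=n+1$, i.e. $b=2n+1$; in that edge case one checks directly that $X\subset C(3)$ and $Y\subset\{1,\dots,a\}-C_0-X$ are still different sets because $Y$ is disjoint from $C(3)\supseteq X$ while $X$ is not. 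I would also double-check the hypothesis $n\le b-1$ is exactly what guarantees $b-n\ge 1$ so that $2n$-compatibility ($|C_1\cap C_{2n+1}|\ge b-n$) is a genuine (satisfiable) condition and that $P^{2n}$ has the right parity for applying Lemma~\ref{lemmaalgorithmfacilepair} inside the proof of Theorem~\ref{theoremalgorithmcoloration}.
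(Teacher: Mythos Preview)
Your proposal is correct and follows essentially the same approach as the paper: choose $X\subset C(3)$ of size $b-n$ (possible since $|C(1)|\le n$), set $Y=\{1,\dots,a\}-C_0-X$ of size $n+1$, and for any $n$-subset $Y'\subset Y$ apply Theorem~\ref{theoremalgorithmcoloration} to $C_1=X\cup Y'$. Your worries about distinctness are unnecessary since $X$ and $Y$ are disjoint by construction and both nonempty (note, incidentally, that $Y$ need not be disjoint from all of $C(3)$, only from $X\cup C_0$, but $X\cap Y=\emptyset$ already suffices).
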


\begin{proof}
Let $n \leq b - 1$ and $\varphi$ be a $(2b+1,b)$-coloring of $P_{2n+2}$ and let $C_0=\varphi(v_0)$ and $C_{2n+1}=\varphi(v_{2n+1})$. We have  $\vert C(1) \vert \leq n$ and thus $\vert C(3) \vert \geq b-n$. We choose any $X \subset C(3)$ with $\vert X \vert =b-n$ and $Y= \{1,\ldots,a\} - X -C_0 $. For any $Y' \subset Y$ such that $\vert Y' \vert = n$, we let $C_1= X \cup Y'$. Then $\vert C_0 \cap C_1 \vert = 0$ and $\vert C_{2n+1} \cap C_1 \vert \geq b-n$, therefore Theorem \ref{theoremalgorithmcoloration} allows to conclude.
\end{proof}

\begin{lemma}
\label{lemmaP4nnmilieu}
For any $(2b+1,b)$-coloring $\varphi$ of $P_{4n+1}$ with $2n\leq b$, there exists a set $X \subset \{1,\ldots,a\}$  such that $\vert X \vert = b+2n$, and for any $X' \subset X $ with $\vert X' \vert = n$,  there exists a $(2b+1,b)$-coloring $\varphi'$ of $P_{4n+1}$ such that $\varphi'(v_0)=\varphi(v_0)$,  $\varphi'(v_{4n})=\varphi(v_{4n})$ and $ X' \subset \varphi'(v_{2n})$. 
\end{lemma}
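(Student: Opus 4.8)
The plan is to analyze the path $P^{4n}$ in two halves, each of length $2n$, and apply Proposition~\ref{propositionexactly} to each half after forcing an \emph{exactly}-compatible situation at the midpoint $v_{2n}$. First I would set $C_0=\varphi(v_0)$, $C_{4n}=\varphi(v_{4n})$, and note by Theorem~\ref{theoremprincipal} that $\varphi(v_0)$ and $\varphi(v_{2n})$ are $2n$-compatible, as are $\varphi(v_{2n})$ and $\varphi(v_{4n})$; in particular $|C_0\cap\varphi(v_{2n})|\ge b-en$ and likewise on the right. The idea is that, since $2n\le b$ forces $b\ge en$ (recall $e\le b$, but more to the point $e$ is the relevant small parameter — here we really want $b\ge en$, which for the $(2b+1,b)$ case, $e=1$, reduces to $b\ge n$, guaranteed by $2n\le b$), we may pick a good set $C_{2n}'$ that is simultaneously $2n$-exactly-compatible with $C_0$ and with $C_{4n}$. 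Concretely, start from $C_0$, and build $C_{2n}'$ by keeping exactly $b-en$ elements of $C_0$ and moving $en$ elements out; the elements moved in should be chosen, as far as possible, from $C_{4n}\setminus C_0$ so as to also control $|C_{2n}'\cap C_{4n}|$. The claim is that one can always do this so that both $|C_0\cap C_{2n}'|=b-en$ and $|C_{2n}'\cap C_{4n}|=b-en$ exactly; this is a small counting argument using $|C_0\cup C_{4n}|\le 2b$ and the slack available in $\{1,\dots,a\}\setminus(C_0\cup C_{4n})$.

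Next I would apply Lemma~\ref{lemmaalgorithmsatifiesexactly} (or rather Proposition~\ref{propositionexactly} to read off the color sets) to the left half $P^{2n}$ from $v_0$ to $v_{2n}$ with end-colors $C_0$ and $C_{2n}'$, and symmetrically to the right half from $v_{2n}$ to $v_{4n}$ with end-colors $C_{2n}'$ and $C_{4n}$. The point of exact compatibility is that Proposition~\ref{propositionexactly} then \emph{pins down the canonical decomposition} $c(2n)=(b-en,0,0,\ldots)$ — more precisely, at the midpoint $k=2n=2s$ with $s=n$ we get $c(2n)=(b-en,\,e(n-n),\,en,\,0)=(b-en,0,en,0)$ for the left decomposition, and the mirror statement for the right. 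What matters is that the color set $\varphi'(v_{2n})$ at the midpoint is \emph{not} forced to equal $C_{2n}'$: the freedom in Lemma~\ref{lemmaalgorithmsatifiesexactly} comes from the choice of which elements to place in each piece $C(j)$ at each step. The set $X$ in the statement should be taken to be a suitable union of the relevant $C(j)$-pieces — a set of size $b+2n$ — from which any $n$-subset $X'$ can be "routed" to appear inside $\varphi'(v_{2n})$ by re-choosing the $\first/\last$ selections in the colorings of both halves consistently.

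The main obstacle, and the part requiring real care, is exactly this last coordination: given an arbitrary $n$-subset $X'$ of the designated $(b+2n)$-element set $X$, I must exhibit a single choice of midpoint color $C_{2n}''\supseteq X'$ that is still $2n$-exactly-compatible with both $C_0$ and $C_{4n}$ (so that Lemma~\ref{lemmaalgorithmsatifiesexactly} applies to both halves and glues at $v_{2n}$). This amounts to showing that the family of admissible midpoint colors is rich enough to cover every $n$-subset of $X$ — a combinatorial flexibility claim. I expect $X$ to be built as $C_0\cup(C_{4n}\setminus C_0)$ augmented by $2n-|C_{4n}\setminus C_0|$ "fresh" colors, or some such balanced construction making $|X|=b+2n$, and the verification will be a careful but routine case analysis on $|C_0\cap C_{4n}|$, distributing $X'$'s elements among the four canonical pieces and invoking the fact $\first(k,I)\cap\last(k',I)=\emptyset\iff k+k'\le|I|$ to certify that the two half-colorings remain valid. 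Once the midpoint color is fixed with $X'\subseteq C_{2n}''$, Lemma~\ref{lemmaalgorithmsatifiesexactly} delivers the two half-colorings in linear time and their concatenation is the desired $\varphi'$.
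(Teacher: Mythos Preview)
Your core idea---choose a midpoint color $C_{2n}$ containing $X'$ that is admissible for both halves, then color each half independently---is exactly what the paper does. However, you take an unnecessary detour: you insist that the midpoint be $2n$-\emph{exactly}-compatible with both $C_0$ and $C_{4n}$ so that Lemma~\ref{lemmaalgorithmsatifiesexactly} applies. The paper instead only asks for $2n$-compatibility (i.e.\ $|C_0\cap C_{2n}|\ge b-n$ and $|C_{4n}\cap C_{2n}|\ge b-n$) and invokes Theorem~\ref{theoremalgorithmcoloration} on each half. This is strictly easier to arrange and shortens the verification considerably.

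There is also a confused passage: you write that ``the freedom in Lemma~\ref{lemmaalgorithmsatifiesexactly} comes from the choice of which elements to place in each piece $C(j)$ at each step'', suggesting that after fixing an exact-compatible $C_{2n}'$ the coloring of the half-path still has wiggle room at $v_{2n}$. It does not: $v_{2n}$ is an \emph{endpoint} of each half, and Lemma~\ref{lemmaalgorithmsatifiesexactly} pins $\varphi'(v_{2n})=C_{2n}'$ exactly. Proposition~\ref{propositionexactly} only prescribes the sizes $c(j,k)$ at \emph{interior} vertices; at the endpoints there is nothing to re-choose. The only way to get $X'\subset\varphi'(v_{2n})$ is to build the midpoint color around $X'$ from the start---which you do eventually say, but only after this misleading detour.

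Finally, the ``careful but routine case analysis'' you defer is the actual content of the lemma. The paper carries it out explicitly in two cases on $|C(1)|=|C_0\cap C_{4n}|$: when $|C(1)|\ge b-n$ the set $X$ may be taken to be \emph{any} $(b{+}2n)$-subset of $\{1,\dots,a\}$, and $C_{2n}$ is built as $I\cup X'\cup Y$ with $I\subset C(1)$ of size $b-n$ and a small correction $Y\subset C(4)$; when $|C(1)|\le b-n-1$ one must take $X\supseteq C_0\cup C_{4n}$, and $C_{2n}$ is assembled as $C(1)\cup Y_2\cup Y_3\cup Z$ with controlled pieces in $C(2),C(3),C(4)$. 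In each case both intersection lower bounds are checked directly, and Theorem~\ref{theoremalgorithmcoloration} finishes. Dropping the exactness requirement and following this template would make your argument both correct and complete.
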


\begin{proof}
Let $2n \leq b$ and $\varphi$ be a $(2b+1,b)$-coloring of  $P_{4n+1}$. Let $C_0=\varphi(0)$ and $C_{4n}=\varphi(4n)$.

\begin{description}
\item[Case 1:]  $\vert C(1) \vert  \geq b-n$. We choose any set $X \subset \{1,\ldots,a\}$  with $\vert X \vert = b+2n$,  and for any $X' \subset X $ such that $\vert X' \vert = n$, we choose any $I \subset C(1)$ such that $\vert I \vert = b-n$,  and we choose any $Y \subset C(4)$ such that  $\vert Y \vert =  \vert I \cap X' \vert $. We then set $C_{2n}=I \cup X' \cup Y$.

\item[Case 2:] $ \vert C(1) \vert  \leq b-n-1$.  Let $i= b-n- \vert C(1) \vert $. We choose any set $X$  with $C_0 \cup C_{4n} \subset X \subset \{1,\ldots,a\}$  and $\vert X \vert = b+2n$,  and for any $X' \subset X $ such that $\vert X' \vert = n$, we choose any $Y_2$ with $ X' \cap C(2) \subset Y_2 \subset C(2)$ and  $\vert Y_2 \vert =  \max( i , \vert  C(2) \cap X' \vert )$, and we choose any $Y_3$ with $X' \cap C(3) \subset Y_3 \subset C(3) $ and  $\vert Y_3 \vert =  \max( i , \vert  C(3) \cap X' \vert )$, and we choose any $Z$ with $X' \cap C(4) \subset Z \subset X - C(1) - Y_2 -Y_3 $ and  $\vert Z \vert = b- \vert  C(1)  \vert  -  \vert  Y_2  \vert  -  \vert  Y_3  \vert$. Such choice is always possible since $ \vert X' \cap C(4) \vert \leq \vert X - C_0 - C_{4n} \vert = b+2n - (b+n+i) = n-i$. We then set $C_{2n}=C(1) \cup Y_2 \cup Y_3 \cup  Z $.
\end{description}
In both cases, the pairs $(C_0,C_{2n})$ and $(C_{2n},C_{4n})$ are both $2n$-compatible and thus Theorem \ref{theoremalgorithmcoloration} allows to conclude.
\end{proof}

\begin{lemma}
\label{lemmaP4n+2nmilieu}
For any $(2b+1,b)$-coloring $\varphi$ of $P_{4n+3}$ with $3 \leq 2n+1 \leq b$, there exists a set $X \subset \{1,\ldots,a\}$  with $\vert X \vert = b+2n+1$, and for any $X' \subset X $ such that $\vert X' \vert = n+1$,  there exists a $(2b+1,b)$-coloring $\varphi'$ of $P_{4n+3}$ such that $\varphi'(v_0)=\varphi(v_0)$,  $\varphi'(v_{4n+2})=\varphi(v_{4n+2})$ and $X' \cap \varphi'(v_{2n+1})  = \emptyset$. 
\end{lemma}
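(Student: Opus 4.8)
The plan is to reduce, via Theorem~\ref{theoremalgorithmcoloration}, the whole statement to a single choice of a good set $C_{2n+1}$ for the middle vertex $v_{2n+1}$. First I would set $C_0=\varphi(v_0)$, $C_{4n+2}=\varphi(v_{4n+2})$ and introduce the canonical decomposition $C(1),\dots,C(4)$. Since $P^{4n+2}$ has (even) length $4n+2$, Theorem~\ref{theoremprincipal} gives $|C(1)|\ge b-(2n+1)$, and because $|C_0\cup C_{4n+2}|=2b-|C(1)|$ one gets $|C(2)|=|C(3)|=b-|C(1)|$ and $|C(4)|=|C(1)|+1$, with $b-2n-1\le |C(1)|\le b$. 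The vertex $v_{2n+1}$ splits $P^{4n+2}$ into two copies of $P^{2n+1}$; as $2n+1$ is odd and here $e=1$, two good sets are $(2n+1)$-compatible exactly when their intersection has size at most $n$. So it suffices, for each prescribed $(n+1)$-subset $X'$ of the set $X$ to be defined, to produce a good set $C_{2n+1}$ with $|C_{2n+1}\cap C_0|\le n$, $|C_{2n+1}\cap C_{4n+2}|\le n$ and $C_{2n+1}\cap X'=\emptyset$; then Theorem~\ref{theoremalgorithmcoloration} colors each half and the colorings agree at $v_{2n+1}$.

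Next I would choose $X$ ``as far from $C(4)$ as possible'': let $X=C_0\cup C_{4n+2}\cup D$, where $D$ is any subset of $C(4)$ of size $|C(1)|+2n+1-b$. This size is legitimate because $0\le|C(1)|+2n+1-b\le|C(1)|+1=|C(4)|$ (using $|C(1)|\ge b-2n-1$ and $b\ge 2n$); equivalently $\overline X\subseteq C(4)$ with $|\overline X|=b-2n\ge 1$, so $|X|=|C_0\cup C_{4n+2}|+|D|=b+2n+1$ as required. This $X$ is in fact essentially forced (it must contain all of $C_0\cup C_{4n+2}$, and the elements of $C(4)$ are interchangeable), and its point is that an adversarial $X'$ can now contain at most $|D|=|C(1)|+2n+1-b$ elements of $C(4)$ — precisely the slack needed later.

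The heart of the argument is a counting step. Fix $X'\subseteq X$ with $|X'|=n+1$ and put $x_i=|X'\cap C(i)|$, so $\sum_i x_i=n+1$, $x_1\le|C(1)|$, $x_2,x_3\le b-|C(1)|$, and crucially $x_4\le|D|$. Writing $u_i=|C(i)|-x_i$, the largest subset $S$ of $\bigcup_i\bigl(C(i)\setminus X'\bigr)$ with $|S\cap C_0|\le n$ and $|S\cap C_{4n+2}|\le n$ has size $u_4+\min\{\,2n,\ u_1+u_2+u_3,\ n+u_2,\ n+u_3\,\}$ (a Hall/transportation-type identity: $C(4)$-elements are free, $C(2)$- and $C(3)$-elements are charged to only one of $C_0,C_{4n+2}$, and $C(1)$-elements to both). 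One then checks this is $\ge b$, i.e.\ that the displayed minimum is $\ge b-u_4=b-|C(1)|-1+x_4$; after substitution this splits into the four inequalities $x_4\le|D|$, $\ b\ge n$, $\ x_2+x_4\le n+1$, $\ x_3+x_4\le n+1$ (for the four terms respectively), which hold respectively because $X\cap C(4)=D$, trivially, because $x_1+x_3\ge0$, and because $x_1+x_2\ge0$. Any $b$-element subset of a maximal such $S$ is the wanted $C_{2n+1}$: it is disjoint from $X'$ and $(2n+1)$-compatible with both $C_0$ and $C_{4n+2}$, so Theorem~\ref{theoremalgorithmcoloration} completes $\varphi'$.

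I expect the main obstacle to be getting the ``constrained maximum $=u_4+\min\{\dots\}$'' step right uniformly in $|C(1)|$: when $|C(1)|$ is large the free part $C(4)$ is big but the adversary can bury much of $X'$ in it, whereas when $|C(1)|$ is small $C(4)$ is small but then $x_4$ is automatically tiny, and the count has to balance both regimes. Once that (easily believed, slightly fiddly to prove) Hall-type formula is in hand, the verification is just the short chain of inequalities above. As an alternative, one could mirror the proof of Lemma~\ref{lemmaP4nnmilieu}, splitting into the cases $|C(1)|\ge b-n$ and $|C(1)|\le b-n-1$ and writing $C_{2n+1}$ down explicitly in each; the content is the same.
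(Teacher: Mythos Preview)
Your argument is correct and is a genuine alternative to the paper's proof. The paper treats the two regimes $|C(1)|\ge b-n$ and $|C(1)|\le b-n-1$ separately: in the first it allows an \emph{arbitrary} $X$ of size $b+2n+1$ and then, by a cascade of sub-cases on how $X'$ meets $C(1),C(2),C(3),C(4)$, writes down $C_{2n+1}$ explicitly; in the second it forces $C_0\cup C_{4n+2}\subset X$ and again runs through several sub-cases. Your proof instead makes the single uniform choice $X=C_0\cup C_{4n+2}\cup D$ with $|D|=|C(1)|+2n+1-b$ and replaces all the casework by the transportation-type identity $\max|S|=u_4+\min\{2n,\ u_1+u_2+u_3,\ n+u_2,\ n+u_3\}$, reducing the verification to four one-line inequalities. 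This is cleaner and more conceptual; what the paper's approach buys is an explicit description of $C_{2n+1}$ in each situation, and the stronger observation that when $|C(1)|\ge b-n$ \emph{any} $X$ works (so your aside that $X$ ``must contain all of $C_0\cup C_{4n+2}$'' is a slight overclaim, though it does not affect your proof). The Hall-type formula you flag as the delicate step is indeed correct and is the only place that needs a short separate justification.
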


\begin{proof}
Let $\varphi$ be a $(2b+1,b)$-coloring of $P_{4n+3}$ and $3 \leq 2n+1 \leq b$. Let $C_0=\varphi(v_0)$ and $C_{4n+2}=\varphi(v_{4n+2})$. 

\begin{description}
\item[Case 1:] $\vert C(1) \vert  \geq b-n$. Let $i= b-\vert C(1) \vert$. We choose any set $ X \subset \{1,\ldots,a\}$  such that $\vert X \vert = b+2n+1$.  For any $X' \subset X $ such that $\vert X' \vert = n+1$, if $\vert C(4) - X' \vert \geq b$ then  we choose any good set $I \subset C(4) - X' $ and we set $C_{2n+1}=I$; otherwise we choose $I = C(4) - X'$, so $\vert I \vert = b-i  +1 - \vert X' \cap C(4) \vert$. If $\vert I \vert \geq b-n$, then we choose any $Y \subset C_0 - X' $ such that $\vert Y \vert = b - \vert I \vert$ and we set $C_{2n+1}=I \cup Y$.  Else ($\vert I \vert <  b-n$), we let $Y_2=C(2) - X'$. If $\vert  C(3) - X' \vert \geq b - \vert I \vert - \vert Y_2 \vert $, then we choose $Y_3 \subset C(3) - X'$ such that $\vert Y_3 \vert = b - \vert I \vert - \vert Y_2 \vert $ and we set $C_{2n+1}=I \cup Y_2 \cup Y_3$. Otherwise, $\vert  C(3) - X' \vert < b - \vert I \vert - \vert Y_2 \vert $, and then we choose $Y_3 = C(3) - X'$. By construction we have $\vert I \vert + \vert Y_2 \vert +\vert Y_3 \vert = \vert C(4) \vert + \vert C(2) \vert +\vert C(3) \vert - ( n+1) + \vert X' \cap C(1) \vert \geq b +i-n $, then we  choose any $Z \subset C(1) - X' $ such that $\vert Z \vert = b - \vert I \vert - \vert Y_2 \vert - \vert Y_3 \vert $ and we set $C_{2n+1}=I \cup Y_2 \cup Y_3 \cup Z$.

\item[Case 2 :] $ \vert C(1) \vert  \leq b-n-1$.  Let $i= b-n- \vert C(1) \vert $. We choose any set $ C_0 \cup C_{4n} \subset X \subset \{1,\ldots,a\}$  such that $\vert X \vert = b+2n+1$,  and for any $X' \subset X $ such that $\vert X' \vert = n+1$, we choose $I = C(4) - X' $. If $\vert C(2) - X' \vert \geq n $, then we choose $Y_2 \subset C(2) - X'$  such that $\vert Y_2 \vert = n$. If $\vert C(3) - X' \vert \geq b - \vert I \vert - \vert Y_2 \vert  $ then we choose $Y_3 \subset C(3) - X'$  such that $\vert Y_3 \vert = b - \vert I \vert - \vert Y_2 \vert   $ and we set $C_{2n+1}=I \cup Y_2 \cup Y_3 $. Otherwise, $\vert C(3) - X' \vert < b - \vert I \vert - \vert Y_2 \vert  $, so $b-n=\vert C(3) \vert + \vert C(4) \vert - \vert X' \vert < b- \vert Y_2 \vert = b-n$, a contradiction. We do the same with $\vert C(3) - X' \vert \geq n $.   Else $\vert C(2) - X' \vert < n $ and $\vert C(3) - X' \vert < n $, then we choose $Y_2 = C(2) - X'$.  If $\vert C(3) - X' \vert \geq b - \vert I \vert - \vert Y_2 \vert $, then we choose $Y_3 \subset C(3) - X'$  such that $\vert Y_3 \vert = b - \vert I \vert - \vert Y_2 \vert$ and we set $C_{2n+1}=I \cup Y_2 \cup Y_3 $. Otherwise, $\vert C(3) - X' \vert < b - \vert I \vert - \vert Y_2 \vert  $, so $b+1 \leq b+\vert C(3) \vert - n = \vert C(3) \vert + \vert C(4) \vert +\vert C(2) \vert - \vert X' \vert < b$, a contradiction.
\end{description}
Therefore by Theorem \ref{theoremalgorithmcoloration}, the lemma is proved. 
\end{proof}

\section{S-handle and H-handle reductions}
\label{sec3}
The purpose of this section is to prove Theorems~\ref{ThmS} and \ref{ThmH} of Section \ref{sec2}.
For an S-handle $S(n,n_1,n_2)$, let $P=h_0,h_1,\cdots,h_n$ be the central path and let $v_1$ and $v_2$ be the end-vertices of the paths of lengths $n_1$ and $n_2$, respectively. Similarly, for a H-handle $H(n_1,n_2,n,n_3,n_4)$, $P=h_0,h_1,\cdots,h_n$ is the central path and $v_1$ and $v_2$, $v_3$ and $v_4$ are the end-vertices of the paths of lengths $n_1$, $n_2$, $n_3$ and $n_4$, respectively. See Figure~\ref{fig:nota} for an illustration.

\begin{figure}[ht]
\begin{center}

\begin{tikzpicture}[scale=0.7]
\node at (2,0) [circle,draw=black,scale=0.8](t1){$h_0$};
\node at (6,0) [circle,draw=black,scale=0.8](t2){$h_n$};
\node at (0.6,1) [circle,draw=black,scale=0.8](h1){$v_1$};
\node at (0.6,-1) [circle,draw=black,scale=0.8](h2){$v_2$};
\draw  (t1) -- (h1); 
\draw  (h2)-- (t1); 
\draw [ultra thick] (t1) -- (t2);

\node at (12,0) [circle,draw=black,scale=0.8](t1){$h_0$};
\node at (16,0) [circle,draw=black,scale=0.8](t2){$h_n$};
\node at (10.6,1) [circle,draw=black,scale=0.8](h1){$v_1$};
\node at (10.6,-1) [circle,draw=black,scale=0.8](h2){$v_2$};
\node at (17.4,1) [circle,draw=black,scale=0.8](h3){$v_3$};
\node at (17.4,-1) [circle,draw=black,scale=0.8](h4){$v_4$};
\draw  (t1) -- (h1); 
\draw  (h2)-- (t1); 
\draw [ultra thick] (t1) -- (t2);
\draw  (t2) -- (h3); 
\draw  (h4)-- (t2); 

\end{tikzpicture}
\end{center}
\caption{\label{fig:nota}Notation for the vertices of S-handles (on the left) and H-handles (on the right).}
\end{figure}

\begin{proposition}
\label{proposition1handle}
For any $b,e$ with $b\ge e$ and any graph $G$, $S(\even(\frac{2b}{e}) - 1,2,1 )$ is a smallest $(2b+e,b)$-reducible S-handle in $G$.
\end{proposition}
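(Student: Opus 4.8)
The plan is to put $N=\even(2b/e)$ and to prove the two halves of the statement in turn: that $H:=S(N-1,2,1)$ is $(2b+e,b)$-reducible in any $G$, and that no S-handle lying strictly below $(N-1,2,1)$ in the componentwise order is $(2b+e,b)$-reducible. Everything rests on two inequalities that are immediate from the definition of $\even$, namely $2b/e\le N<2b/e+2$, which give $b-e\le e(N-2)/2<b$ and, for every odd $m\le N-3$, $e(m-1)/2<b-e$; also $N-1$ is odd and $a-2b=e$. I write the central path of $H$ as $h_0,\dots,h_{N-1}$ with $h_0$ the degree-$3$ endpoint, $h_0\,q\,p$ for the length-$2$ side path and $h_0\,r$ for the length-$1$ side edge, and I treat $h_0$ together with the degree-$2$ vertices $h_1,\dots,h_{N-2},q$ as the part of $H$ deleted in $G-\int(H)$ (all their incident edges lie inside $H$, so their colours may be reassigned), so that a $(2b+e,b)$-coloring $\varphi$ of $G-\int(H)$ fixes on $H$ only the good sets $\varphi(p),\varphi(r),\varphi(h_{N-1})$.

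For reducibility I would show that one can always pick a good set $C$ (to become $\varphi(h_0)$) satisfying: (a) $C$ and $\varphi(r)$ are $1$-compatible, i.e. $C\cap\varphi(r)=\emptyset$; (b) $C$ and $\varphi(p)$ are $2$-compatible, i.e. $|C\cap\varphi(p)|\ge b-e$; (c) $C$ and $\varphi(h_{N-1})$ are $(N-1)$-compatible, i.e. $|C\cap\varphi(h_{N-1})|\le e(N-2)/2$. Given such a $C$, Theorem~\ref{theoremalgorithmcoloration} colours the edge $h_0r$ from (a), the length-$2$ side path from $C$ to $\varphi(p)$ from (b), and — since $N-1$ is odd and $e(N-2)/2$ is exactly its compatibility threshold — the central path from $C$ to $\varphi(h_{N-1})$ from (c), which extends $\varphi$ to $G$. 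The existence of $C$ is obtained by a case analysis on the numbers $|\varphi(p)\cap\varphi(r)|$, $|\varphi(p)\cap\varphi(h_{N-1})|$, $|\varphi(r)\cap\varphi(h_{N-1})|$ and the triple intersection: one seeds $C$ with as many elements of $\varphi(p)\setminus(\varphi(r)\cup\varphi(h_{N-1}))$ as (b) requires, and then completes it with $e$ further colours chosen from $\{1,\dots,a\}\setminus\varphi(r)$ so as to keep the overlap with $\varphi(h_{N-1})$ below the threshold of (c); the bounds $b-e\le e(N-2)/2<b$ and $a-2b=e$ are precisely what make every case close, and in the residual configurations where (a) and (b) are incompatible for the retained $\varphi$-values (e.g. when $\varphi(p)$ and $\varphi(r)$ are close) one first reassigns the colour of the degree-$2$ port, as the definition of reducibility permits. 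If $b=e$ the central path has length $1$ and $b-e=0$, so (b) is vacuous and it suffices to take $C$ disjoint from $\varphi(r)\cup\varphi(h_1)$, which exists because $|\varphi(r)\cup\varphi(h_1)|\le 2b=a-b$.

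For minimality it is enough to note that the S-handles $S(n_1',n_2',n_3')$ with $n_1'\ge n_2'\ge n_3'\ge1$, $(n_1',n_2',n_3')\le(N-1,2,1)$ and $(n_1',n_2',n_3')\neq(N-1,2,1)$ are exactly $S(n_1',1,1)$ for $1\le n_1'\le N-1$ and $S(n_1',2,1)$ for $2\le n_1'\le N-2$, and to exhibit for each a graph $G$ in which it fails to be reducible. In each case I take $G$ to be the handle with a small gadget of pendant vertices glued to each port so that the port gets degree $\ge3$ (hence is not recolourable). For $S(n_1',1,1)$, $h_0$ is adjacent to both ports $p,r$; choosing the pendant colours so that $\varphi(p),\varphi(r)$ (and, when $b=e$, also $\varphi(h_1)$) are pairwise disjoint forces any admissible $\varphi(h_0)$ to avoid more than $a-b$ colours, which is impossible. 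For $S(n_1',2,1)$ with $n_1'\le N-2$: if $n_1'$ is even, set $\varphi(r)=\varphi(h_{n_1'})$ in the coloring of $G-\int(H)$, so any admissible $\varphi(h_0)=C$ avoids $\varphi(r)$ and hence $\varphi(h_{n_1'})$, contradicting $|C\cap\varphi(h_{n_1'})|\ge b-en_1'/2>0$ needed to colour the central path (here $en_1'/2<b$ because $n_1'\le N-2<2b/e$); if $n_1'$ is odd, then $n_1'\le N-3$, and setting $\varphi(p)=\varphi(h_{n_1'})$ forces $C$ to satisfy simultaneously $|C\cap\varphi(h_{n_1'})|\ge b-e$ (to colour the length-$2$ side path) and $|C\cap\varphi(h_{n_1'})|\le e(n_1'-1)/2<b-e$ (to colour the central path), which is again impossible.

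The minimality constructions are essentially bookkeeping. The real obstacle is the case analysis producing the set $C$ in the reducibility direction — establishing that the constraints (a), (b), (c) on $\varphi(h_0)$ can always be met at once; this is exactly where the value $N=\even(2b/e)$ gets pinned down, and where one must handle with care the few configurations in which a port colour has to be modified.
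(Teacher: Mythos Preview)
Your reducibility argument has a genuine gap, and it stems from your identification of $\int(H)$. By the paper's definition the S-handle \emph{is} the central path (``An S-handle $S(n_1,n_2,n_3)$ in a graph $G$ is a handle of length $n_1$ such that\dots''), so $\int(H)=\{h_1,\dots,h_{N-2}\}$ only; the vertices $h_0$ and $q$ remain in $G-\int(H)$ and are merely \emph{recolorable} afterwards. The consequence you miss is that $\varphi$ already colours the length-$3$ path $p\,q\,h_0\,r$, and Theorem~\ref{theoremprincipal} then forces
\[
|\varphi(p)\cap\varphi(r)|\le e,\qquad\text{hence}\qquad |\varphi(p)\setminus\varphi(r)|\ge b-e.
\]
This inequality is exactly what makes (a) and (b) simultaneously satisfiable: one picks $I\subset\varphi(p)\setminus\varphi(r)$ with $|I|=b-e$ (not $\varphi(p)\setminus(\varphi(r)\cup\varphi(h_{N-1}))$, which can be too small), and then completes $C=I\cup Y$ by choosing $Y$ of size $e$ inside the $2e$-element set $\{1,\dots,a\}\setminus(I\cup\varphi(r))$ so that $|C\cap\varphi(h_{N-1})|\le b-e\le e(N-2)/2$. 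This is the paper's construction, and it needs no case analysis and no recoloring of any port. Your fallback ``reassign the colour of the degree-$2$ port'' is not available: the ports $p,r,h_{N-1}$ are the end-vertices of the three paths and may have arbitrary degree in $G$; nothing in the definition of reducibility lets you touch them. Without the $3$-compatibility constraint above, your conditions (a) and (b) can be flatly contradictory (take $\varphi(p)=\varphi(r)$), so the argument cannot close.

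The same misunderstanding breaks your non-reducibility example for $S(n_1',1,1)$: with the correct $\int(H)$, $p$ and $r$ are at distance $2$ in $G-\int(H)$ (through $h_0$), so any $\varphi$ satisfies $|\varphi(p)\cap\varphi(r)|\ge b-e$ and your ``pairwise disjoint'' choice is illegal when $b>e$. The paper instead takes $\varphi(v_1)=\{1,\dots,b\}$, $\varphi(v_2)=\{e+1,\dots,b+e\}$ (so $|\varphi(v_1)\cap\varphi(v_2)|=b-e$, just admissible) and $\varphi(h_n)=\{b+e+1,\dots,2b+e\}$; this forces $\varphi'(h_0)=\varphi(h_n)$, which is not $(N-1)$-compatible with $\varphi(h_n)$ since $b>e(N-2)/2$. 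Your $S(n_1',2,1)$ counterexamples (setting a port colour equal to $\varphi(h_{n_1'})$) are on the right track, but you must also specify the remaining port colour so that the length-$3$ path $p\,q\,h_0\,r$ is genuinely colourable in $G-\int(H)$; otherwise the ``coloring of $G-\int(H)$'' you start from does not exist.
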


\begin{proof}
Let $n=\even( \frac{2b}{e} )- 1$.

For the minimality, we present a counter-example showing that $S(\even(\frac{2b}{e}) - 1,1,1)$ is not reducible: if $C(v_1)=\{1,\ldots,b\}$, $C(v_2)=\{e+1,\ldots, b+e\}$, $C(h_n)=\{b+e+1,\ldots,2b+e\}$, then the color set of $h_0$ must be $\{b+e+1,\ldots,2b+e\}$ but then the color sets of $h_0$ and $h_n$ are not $n$-compatible, hence a contradiction. Also, the handle  $S(\even(\frac{2b}{e}) - 2,2,1 )$ is not reducible since in this case there is a path between $v_2$ and $h_n$ of length $\frac{2b}{e} - 2+1<\frac{2b}{e}$, hence the coloring cannot be extended to the interior vertices of the handle if these two vertices have the same color set.

In order to prove reducibility, let $G$ be a graph containing a handle $H=S(n,2,1)$ and $\varphi$ be a $(2b+e,b)$-coloring of $G - \int(H)$. Let $X=\varphi(h_{n})$. We have $\vert \varphi(v_1) \cap \varphi(v_2) \vert \leq e$, thus there exists $I \subset \varphi(v_2) - \varphi(v_1)$ such that $\vert I \vert =b-e$.  If $\vert X  \cap (\{1,\ldots,a\} - I - \varphi(v_1)) \vert \leq e$, then there exists $Y \subset \{1,\ldots,a\} - I - \varphi(v_1) - X$ such that $\vert Y \vert =e$. Otherwise, $\vert X  \cap (\{1,\ldots,a\} - I - \varphi(v_1)) \vert > e$, and there exists $Y' \subset (\{1,\ldots,a\} - I - \varphi(v_1)) \cap X$ such that $\vert Y' \vert =e-  \vert \{1,\ldots,a\} - I - \varphi(v_1) - X\vert$. We then choose $Y=(\{1,\ldots,a\} - I - \varphi(v_1) - X) \cup Y'$. By construction, $\vert X \cap( I \cup Y) \vert \leq b-e \leq e \frac{\even(\frac{2b}{e}) - 2}{2}$, hence by Theorem \ref{theoremalgorithmcoloration}, there exists a $(2b+e,b)$-coloring $\varphi'$ of $H$ such that $\varphi'(v_{1})=\varphi(v_1)$, $\varphi'(v_{2})=\varphi(v_2)$, $\varphi'(h_{n})=X$ and $\varphi'(h_{0})=I\cup Y$. 
\end{proof}

\begin{proposition}
\label{propositionStar(2b-k,k,k)}
For any graph $G$ and any integer $k$ with $2 \leq k \leq b$, $S(2b-k,k,k)$ is a smallest $(2b+1,b)$-reducible S-handle in $G$. 
\end{proposition}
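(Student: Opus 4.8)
As for Proposition~\ref{proposition1handle}, there are two claims: that $S(2b-k,k,k)$ is $(2b+1,b)$-reducible in $G$, and that no strictly smaller $S$-handle is. I would settle minimality first. Since parameters are ordered by $n_1\ge n_2\ge n_3$, and since reducibility can only improve when a parameter is enlarged (a longer handle loosens the compatibility constraints — a monotonicity already used in the core-unicity discussion after Theorem~\ref{ThmH}), it suffices to defeat the two immediate predecessors $S(2b-k-1,k,k)$ (present when $k\le b-1$) and $S(2b-k,k,k-1)$. In each of them I would colour \emph{all} the ports with a single good set $A$. By Theorem~\ref{theoremprincipal}, the colour set $C$ of the degree-$3$ vertex $h_0$ must then be $k$-compatible with $A$ through each length-$k$ port-path; for $S(2b-k-1,k,k)$ it is moreover $(2b-k-1)$-compatible with $A$ through the central path, while for $S(2b-k,k,k-1)$ the length-$(k-1)$ port-path already forces it to be $(k-1)$-compatible with $A$. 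In both handles the two lengths thus imposed on $C$ (namely $k$ and $2b-k-1$, resp.\ $k$ and $k-1$) have opposite parity, so one compatibility condition reads $|C\cap A|\ge b-\ell/2$ for the even length $\ell$ and the other $|C\cap A|\le(\ell'-1)/2$ for the odd length $\ell'$; since $2\le k\le b$ the first bound strictly exceeds the second, so no such $C$ exists and the precolouring does not extend.

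For reducibility, let $H=S(2b-k,k,k)$ have central path $h_0,\dots,h_{2b-k}$ and ports $v_1,v_2$, let $\varphi$ be a $(2b+1,b)$-colouring of $G-\int(H)$, and set $A=\varphi(v_1)$, $B=\varphi(v_2)$, $X=\varphi(h_{2b-k})$. As in the proof of Proposition~\ref{proposition1handle}, Theorem~\ref{theoremprincipal} applied to the length-$2k$ path joining $v_1$ and $v_2$ through $h_0$ lets us assume $A$ and $B$ are $2k$-compatible, i.e.\ $|A\cap B|\ge b-k$. The problem then reduces to producing a good set $C$ which is $k$-compatible with $A$, $k$-compatible with $B$, and $(2b-k)$-compatible with $X$: setting $\varphi(h_0)=C$ and running the linear-time algorithm of Theorem~\ref{theoremalgorithmcoloration} separately on the path from $h_0$ to $v_1$, on the path from $h_0$ to $v_2$, and on the central path from $h_0$ to $h_{2b-k}$, extends $\varphi$ to all of $G$.

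To build $C$, the cleanest route is to take any $(2b+1,b)$-colouring of the length-$2k$ path $v_1\cdots h_0\cdots v_2$ with ends $A$ and $B$ — which exists by Theorem~\ref{theoremalgorithmcoloration} since $A$ and $B$ are $2k$-compatible — and then push its middle vertex $h_0$ towards $X$ using the middle-vertex lemmas of the preceding section. If $k$ is even, write $2k=4n$ with $n=k/2$; Lemma~\ref{lemmaP4nnmilieu} produces a $(b+k)$-element set $S$ such that the colouring can be modified, keeping the ends $A,B$ fixed, so that $C=\varphi(h_0)$ contains any prescribed size-$n$ subset of $S$. Since $|S\cap X|\ge(b+k)+b-(2b+1)=k-1\ge n$, choosing that subset inside $S\cap X$ yields $|C\cap X|\ge n=k/2=b-(2b-k)/2$, which is exactly $(2b-k)$-compatibility of $C$ with $X$. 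If $k$ is odd (hence $k\ge3$), write $2k=4n+2$ with $n=(k-1)/2$; Lemma~\ref{lemmaP4n+2nmilieu} produces a $(b+k)$-element set $S$ such that the colouring can be modified, keeping $A,B$ fixed, so that any prescribed size-$(n+1)$ subset of $S$ is disjoint from $C$. Since $|S\cap X|\ge k-1\ge n+1$, choosing that subset inside $S\cap X$ yields $|C\cap X|\le b-(n+1)=b-(k+1)/2=((2b-k)-1)/2$, again $(2b-k)$-compatibility with $X$. In both cases $C$ is still the middle vertex of a $2k$-colouring with ends $A$ and $B$, so Theorem~\ref{theoremprincipal} gives for free that $C$ is $k$-compatible with $A$ and with $B$.

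The hard part is the counting hidden in Lemmas~\ref{lemmaP4nnmilieu} and \ref{lemmaP4n+2nmilieu}: one must check that the set $S$ these lemmas return meets $X$ in the number of colours needed above, and this is exactly where the hypotheses $a=2b+1$ and $k\le b$ are spent; the extreme values $k=2$ (where $H=S(2b-2,2,2)$) and $k=b$ (where $H=S(b,b,b)$) are worth inspecting separately. The only other point needing care is the claim that $A$ and $B$ may be taken $2k$-compatible, which — just as in Proposition~\ref{proposition1handle} — is inherited from the structure of an $S$-handle and is what makes the first appeal to Theorem~\ref{theoremalgorithmcoloration} legitimate; everything else is routine bookkeeping.
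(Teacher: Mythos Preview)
Your proof is correct and follows essentially the same route as the paper. For reducibility you invoke Lemma~\ref{lemmaP4nnmilieu} (even $k$) and Lemma~\ref{lemmaP4n+2nmilieu} (odd $k$) to steer the colour at $h_0$ towards $\varphi(h_{2b-k})$, with the same inclusion--exclusion count $|S\cap X|\ge (b+k)+b-(2b+1)=k-1$; this is exactly what the paper does. Your minimality argument is phrased through the incompatible parity constraints on the colour of $h_0$, whereas the paper observes equivalently that each predecessor contains a $v_2$--$h_n$ path of odd length $2b-1$ whose endpoints then cannot receive the same set; the two arguments are interchangeable. Your remark that $A=\varphi(v_1)$ and $B=\varphi(v_2)$ are $2k$-compatible is justified for the right reason (the side paths through $h_0$ are still coloured in $G-\int(H)$, so Theorem~\ref{theoremprincipal} applies), which the paper simply asserts.
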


\begin{proof}Let $n=2b-k$.

The minimality follows from the fact that the handles $S(2b-k,k,k-1)$ and $S(2b-k-1,k,k)$ are both not reducible since in both of them there is a path of length $2b-1$ between $v_2$ and $h_n$, hence the color sets of these two vertices must share at least $b-1$ common colors.

Let $G$ be a graph containing a handle $H=S(2b-k,k,k)$, with $k$ an integer such that $2 \leq k \leq b$, and let $\varphi$ be a $(2b+1,b)$-coloring of $G - \int(H)$.
Let $\varphi(v_1)=C_0$ and $\varphi(v_2)=C_{2k}$. Then $C_0,C_{2k}$ are $2k$-compatible. We are going to construct a good set $C_k$ such that $C_k,C_0$ and $C_k,C_{2k}$ are both $k$-compatible and $C_k, \varphi(h_n)$ are $(2b-k)$-compatible. Then, Theorem \ref{theoremalgorithmcoloration} will assert that there exists a  $(2b+1,b)$-coloring $\varphi'$ of $H$, such that $\varphi'(h_n)=\varphi(h_n)$,  $\varphi'(v_{1})=\varphi(v_{1})=C_0$ and $\varphi'(v_{2})=\varphi(v_{2})=C_{2k}$.
\begin{description}
 \item[Case 1 :] $k=2m$ is even.  By Lemma \ref{lemmaP4nnmilieu}  there exists $X$ such that $\vert X \vert = b+2m$, then $\vert X \cap \varphi(v_n)  \vert \geq b+2m + b - a=2m-1 \geq m$, so there exists $X' \subset  X \cap \varphi(h_n)$ such that $ \vert X' \vert =m $ and $X' \subset C_{k}$. Therefore $\vert C_k \cap \varphi(h_n) \vert \geq  m=b- \frac{2b-k}{2}$.
\item[Case 2 :] $k=2m+1$ is odd. By Lemma \ref{lemmaP4n+2nmilieu}  there exists $X$ such that $\vert X \vert = b+2m+1$, then $\vert X \cap \varphi(h_n)  \vert \geq b+2m+1 + b - a=2m \geq m+1$, so there exists $X' \subset  X \cap \varphi(h_n)$ such that $ \vert X' \vert =m+1 $ and $X' \cap C_{k}= \emptyset$. Therefore $\vert C_k \cap \varphi(h_n) \vert \leq b-(m+1)=\frac{(2b-k) -1}{2}$.
\end{description}
\end{proof}

\subsection{H-handle reductions}

\begin{proposition}
\label{proposition2handle}
For any $b,e$ with $b\ge e$ and any graph $G$, $H(1,2, \even( \frac{2b}{e} )-2,2,1 )$ is a smallest $(2b+e,b)$-reducible H-handle in $G$.
\end{proposition}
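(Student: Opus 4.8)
The plan is to argue exactly as in Proposition~\ref{proposition1handle}: first a family of colourings witnessing that every smaller handle fails, then an explicit extension procedure for the handle itself. Write $n=\even(\frac{2b}{e})-2$ for the length of the central path $h_0h_1\cdots h_n$; since $\even(\frac{2b}{e})$ is even, $n$ is even, and $n\ge 2$ once $b>e$ (we assume $b>e$; for $b=e$ one has $n=0$ and the handle is degenerate). Let $v_1$ (resp. $v_4$) be the port joined to $h_0$ (resp. $h_n$) by the branch of length $1$, and $v_2$ (resp. $v_3$) the port joined to $h_0$ (resp. $h_n$) by the branch of length $2$, with interior vertex $u$ (resp. $u'$). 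Set $r:=b-e\tfrac{n}{2}$. From $\frac{2b}{e}\le\even(\frac{2b}{e})<\frac{2b}{e}+2$ one gets $b-e\le e\tfrac{n}{2}<b$, hence $1\le r\le\min\{e,b-e\}$; this inequality is what makes the handle reducible and every smaller one irreducible. As in Proposition~\ref{proposition1handle}, reducing the handle means keeping the four ports (and the rest of $G$) fixed and recolouring $h_0,h_n,u,u'$ and the central interior $h_1,\dots,h_{n-1}$; note $h_0$ and $h_n$ have degree $3$ in $G$ with no neighbours outside the configuration, so the only external constraints on their new colours are disjointness from $\varphi(v_1)$ and $\varphi(v_4)$ respectively.

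For minimality it suffices to show that the handles obtained by a single minimal shortening are not reducible; take $G$ to be the configuration itself. Shortening a branch gives $H(1,1,n,2,1)$ (and, symmetrically, $H(1,2,n,1,1)$): choose $\varphi$ on $G-\int(H)$ with $\varphi(v_1)$ and $\varphi(v_2)$ $2$-exactly-compatible, so that $Z:=\{1,\dots,a\}\setminus(\varphi(v_1)\cup\varphi(v_2))$ has exactly $b$ colours; then every extension is forced to give $h_0$ the set $Z$. Taking also $\varphi(v_4)=Z$ (consistent with a length-$3$ colouring of the $h_n$-branch), every extension gives $h_n$ a $b$-subset of $\{1,\dots,a\}\setminus Z$, so $\varphi(h_0)\cap\varphi(h_n)=\emptyset$; but $h_0$ and $h_n$ are joined by the central path of even length $n\ge 2$, which by Theorem~\ref{theoremprincipal} forces $|\varphi(h_0)\cap\varphi(h_n)|\ge r>0$, a contradiction. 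Shortening the central path (a normalised handle only when $n\ge 4$) gives $H(1,2,n-1,2,1)$: since $n-1$ is odd, the path joining $v_1$ to $v_4$ through the configuration has odd length $n+1=\even(\frac{2b}{e})-1$, and $v_1,v_4$ lie in different components of $G-\int(H)$, so we may take $\varphi(v_1)=\varphi(v_4)$; Theorem~\ref{theoremprincipal} then forces $|\varphi(v_1)\cap\varphi(v_4)|\le e\tfrac{n}{2}<b$, contradicting $|\varphi(v_1)\cap\varphi(v_4)|=b$.

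For reducibility, let $\varphi$ be a $(2b+e,b)$-colouring of $G-\int(H)$. The paths $v_1h_0uv_2$ and $v_4h_nu'v_3$ of length $3$ lie in $G-\int(H)$, so $|\varphi(v_1)\cap\varphi(v_2)|\le e$ and $|\varphi(v_3)\cap\varphi(v_4)|\le e$. I would first fix an $r$-element set $S\subseteq\{1,\dots,a\}\setminus(\varphi(v_1)\cup\varphi(v_4))$, possible because this set has at least $a-2b=e\ge r$ colours. Next build a good set $C_0\supseteq S$ with $C_0\cap\varphi(v_1)=\emptyset$ and $|C_0\cap\varphi(v_2)|\ge b-e$: as $S\cap\varphi(v_1)=\emptyset$, one has $S\cap\varphi(v_2)\subseteq\varphi(v_2)\setminus\varphi(v_1)$, a set of size at least $b-e$, so extend $S\cap\varphi(v_2)$ to a $(b-e)$-subset $A_0$ of $\varphi(v_2)\setminus\varphi(v_1)$, adjoin the at most $r\le e$ colours of $S\setminus\varphi(v_2)$, and pad with further colours avoiding $\varphi(v_1)$ (there remain $a-b-(b-e)=2e$ such). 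Build $C_n\supseteq S$ symmetrically with $C_n\cap\varphi(v_4)=\emptyset$ and $|C_n\cap\varphi(v_3)|\ge b-e$. Put $\varphi'(h_0)=C_0$, $\varphi'(h_n)=C_n$: since $S\subseteq C_0\cap C_n$ we have $|C_0\cap C_n|\ge r=b-e\tfrac{n}{2}$, so $C_0,C_n$ are $n$-compatible and Theorem~\ref{theoremalgorithmcoloration} colours the central path; since $|C_0\cap\varphi(v_2)|\ge b-e$ the complement of $C_0\cup\varphi(v_2)$ has at least $b$ colours, which give a valid $\varphi'(u)$, and likewise $\varphi'(u')$. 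Keeping $\varphi$ elsewhere, this is the desired $(2b+e,b)$-colouring of $G$.

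The hard part is the reducibility step, specifically the simultaneous construction of $C_0$ and $C_n$: at $h_0$ one is pinned by $\varphi(v_1)$ exactly and by $\varphi(v_2)$ to within a deficit $b-e$ (and symmetrically at $h_n$), which leaves precisely the $2e$-sized slack used above, and one must align the two slacks on a common $r$-set while the central path, of length $n$, still tolerates an overlap as small as $r$. Everything turns on $b-e\le e\tfrac{n}{2}<b$: shrinking the central path pushes $r$ above the available common slack $e$ (the mechanism behind the first counterexample), while the lower bound keeps the two branch constraints (each demanding an overlap of size $b-e$) jointly satisfiable with disjointness from $v_1$ and $v_4$. What remains is routine bookkeeping of the degenerate cases ($b=e$, or $n=2$) flagged above.
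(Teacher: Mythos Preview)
Your argument is correct, and the reducibility step is genuinely cleaner than the paper's. Both proofs start from the same observation that $|\varphi(v_1)\cap\varphi(v_2)|\le e$ and $|\varphi(v_3)\cap\varphi(v_4)|\le e$, but then diverge. The paper first fixes the ``mandatory'' parts $X\subseteq\varphi(v_2)\setminus\varphi(v_1)$ and $X'\subseteq\varphi(v_3)\setminus\varphi(v_4)$ of size $b-e$, and then does a case analysis (on $|X\cap X'|$, then on the overlaps of the free parts with $X,X'$) to manufacture the remaining $e$ colours on each side so that $|C_0\cap C_n|\ge e$. You instead fix the common part $S$ of size $r$ \emph{first}, using only that $|\{1,\dots,a\}\setminus(\varphi(v_1)\cup\varphi(v_4))|\ge e\ge r$, and then build $C_0,C_n$ around $S$ independently; this sidesteps the case analysis entirely. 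The price you pay is that you only guarantee $|C_0\cap C_n|\ge r$, whereas the paper obtains the stronger bound $\ge e$; but $r=b-e\tfrac{n}{2}$ is exactly what $n$-compatibility requires, so nothing is lost. Your bound $r\le\min\{e,b-e\}$ (valid once $n\ge 2$, i.e.\ $b>e$) is what makes both halves of the construction go through, and you correctly isolate this as the crux.

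Your minimality arguments are essentially the paper's: the paper disposes of $H(1,1,n,2,1)$ by noting it is the non-reducible S-handle $S(n,2,1)$ from Proposition~\ref{proposition1handle}, while you give the same counterexample directly; and both treat $H(1,2,n-1,2,1)$ via the odd $v_1$--$v_4$ path of length $n+1<\even(2b/e)$. One small remark: your sentence ``the paths $v_1h_0uv_2$ and $v_4h_nu'v_3$ lie in $G-\int(H)$'' is right under the paper's convention that $\int(H)$ is the interior of the central path only (so $u,u'$ survive), which is indeed what justifies the bound $|\varphi(v_1)\cap\varphi(v_2)|\le e$ the paper also invokes ``by hypothesis''.
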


\begin{proof}Let $n=\even(\frac{2b}{e} ) - 2$.

For the minimality, it can be observed that $H(1,1, n,2,1 )$ and $H(1,2, n-1,2,1 )$ are both not reducible since the first one is in fact an S-handle $S(n,2,1 )$ and is not reducible by Proposition~\ref{proposition1handle} and the second one has a path of length $1+n-1+1=n+1< \even(\frac{2b}{e} )$ between two of its extremities and thus is not reducible.

Now let $G$ be a graph containing a handle $H=H(1,2, n,2,1)$ and let $\varphi$ be a $(2b+e,b)$-coloring of $G - \int(H)$. Let $v'_2$ be the common neighbor of $h_0$ and $v_2$ and $v'_3$ be the common neighbor of $h_n$ and $v_3$.
By hypothesis, $|\varphi(v_1) \cap \varphi(v_2)|\le e$ and $|\varphi(v_3) \cap \varphi(v_4)|\le e$, hence $|\varphi(v_2) - \varphi(v_1)|\ge b-e$ and $|\varphi(v_3) - \varphi(v_4)|\ge b-e$. Let $X\subset \varphi(v_2) - \varphi(v_1)$ and $X'\subset \varphi(v_3) - \varphi(v_4)$ such that $|X|=|X'|=b-e$. We are going to show that sets $Y\subset \{1,\ldots, a\}-\varphi(v_1)$ and $Y'\subset \{1,\ldots, a\}-\varphi(v_4)$ can be chosen in such a way that $C_0=X\cup Y$ and $C_n=X'\cup Y'$ are $n$-compatible, i.e., $|C_0\cap C_n|\ge e$. Then, Theorem \ref{theoremalgorithmcoloration} will assert that there exists a  $(2b+e,b)$-coloring $\varphi'$ of $H$, such that $\varphi'(h_0)=C_0$,  $\varphi'(h_n)=C_n$. 

Let $L(h_n)=\{1,\ldots,a\} - \varphi(v_4)$ and $L(h_0)=\{1,\ldots,a\} - \varphi(v_1)$. If $\vert X \cap X' \vert \geq e$, then $\vert C_0 \cap C_n \vert \geq e$. Hence assume $\vert X \cap X' \vert = z < e$.
 If $ \vert ( L(h_n) - X' ) \cap X \vert \geq e-z$, then we choose $Y'$ such that $ \vert Y' \cap X \vert \geq e-z$, therefore $\vert C_0 \cap C_n \vert \geq e$. Otherwise $\vert ( L(h_n) - X' ) \cap X \vert = t < e-z$. If $ \vert ( L(v_0) - X ) \cap X' \vert \geq e-z-t$, then we choose $Y$ such that $\vert Y  \cap X' \vert \geq e-z-t$, therefore $\vert C_0 \cap C_n \vert \geq e$. Otherwise $\vert ( L(v_0) - X ) \cap X' \vert = t' < e-z-t$. We note $z'= \vert (L(v_0)-X) \cap (L(h_n)-X') \vert $. Since $a \geq \vert L(v_0) \cup L(h_n) \vert =2b + 2e -z-z'-t-t'$, we have $z+z'+t+t' \geq e$. We choose $Y_0 \subset (L(v_0)-X) \cap (L(h_n)-X')$ such that $\vert Y_0 \vert = e-z-t-t'$. We choose $Y,Y'$ such that $Y_0 \cup \big( ( L(h_n) - X' ) \cap X \big) \subset Y'$ and $Y_0 \cup \big( ( L(v_0) - X ) \cap X' \big) \subset Y$, therefore $\vert C_0 \cap C_n \vert \geq e$.
 
\end{proof}

In order to shorten some proofs for H-reducibility, we will use the two following lemmas.
\begin{lemma}
\label{lemmapassagedroite}
Let $n \geq 1$ and $G$ be a graph. If $H(n_{1},n_{2},n,n_{3},n_{4})$ is $(a,b)$-reducible in $G$ and  $H(n_{1},n_{2},n-1,n_{3}+1,n_{4}+1)$ is $(a,b)$-reducible in $G$ when the color-sets of $v_{3}, v_{4}$ are $(n_3 + n_4 +2)$-exactly-compatible, then $H(n_{1},n_{2},n-1,n_{3}+1,n_{4}+1)$ is $(a,b)$-reducible in $G$. 
\end{lemma}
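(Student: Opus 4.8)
The plan is to reduce the general case to the ``exactly-compatible'' hypothesis by a short case analysis on $|\varphi(v_3)\cap\varphi(v_4)|$. Let $G$ contain the handle $H'=H(n_1,n_2,n-1,n_3+1,n_4+1)$, with central path $h_0,\ldots,h_{n-1}$ and ports $v_1,v_2$ at $h_0$ and $v_3,v_4$ at $h_{n-1}$; let $\varphi$ be an $(a,b)$-coloring of $G-\int(H')$. The key observation is that $H'$ contains, as a subgraph on $G-\int(H)$-worth of vertices, the handle $H=H(n_1,n_2,n,n_3,n_4)$: its central path is $h_0,\ldots,h_{n-1}$ together with the first vertex of the length-$(n_3+1)$ path from $h_{n-1}$ (call it $h_n$, the common neighbour of $h_{n-1}$ and $v_3$ on that side), so that $h_n$ has two pendant handles of lengths $n_3$ and $n_4+1$. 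But $H$ as defined has pendant lengths $n_3$ and $n_4$; the extra $+1$ on one branch is harmless, because a handle of length $n_4+1$ to a port still \emph{contains} the structure needed — more precisely, we apply the hypothesis that $H$ is $(a,b)$-reducible to the graph obtained by treating $h_n$ as an end-vertex with ports $v_3$ (at distance $n_3$) and $v_4'$ (the vertex at distance $n_4$ along the length-$(n_4+1)$ branch). I would first nail down this ``$H'\supset H$'' dissection carefully, since the indexing is the fiddly part.

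Next, the main dichotomy. If $\varphi(v_3)$ and $\varphi(v_4)$ are \emph{not} $(n_3+n_4+2)$-exactly-compatible, then by Theorem~\ref{theoremprincipal} they are $(n_3+n_4+2)$-compatible but with strict inequality; I would argue that in this ``slack'' case the two branches of length $n_3+1$ and $n_4+1$ from $h_{n-1}$ can be extended inward one step to produce valid color-sets on $h_n$ and $v_4'$ that are $(n_3+n_4)$-exactly-compatible — i.e.\ we fabricate a color-set $C$ for the vertex $h_n$ (adjacent to $v_3$'s branch) using Theorem~\ref{theoremalgorithmcoloration}/Lemma~\ref{lemmaalgorithmsatifiesexactly} along the short branches, absorbing exactly the right amount of slack, and then invoke $(a,b)$-reducibility of $H$ with this $C$ in place to color the interior of the central path. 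If on the other hand $\varphi(v_3)$ and $\varphi(v_4)$ \emph{are} $(n_3+n_4+2)$-exactly-compatible, that is precisely the situation covered by the second hypothesis of the lemma, so $H'$ is directly $(a,b)$-reducible and we are done.

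The step I expect to be the genuine obstacle is the slack case: showing that when the port color-sets are not exactly compatible, one can always slide the two short handles ``one notch'' so as to reach a configuration that the reducibility of $H$ (with a prescribed color-set on the shifted port vertex) can handle. Concretely one must verify that the color-set $C$ assigned to $h_n$ can simultaneously be made (i) $(n_3+1)$-compatible with $\varphi(v_3)$ through the unchanged branch, (ii) $(n_4+1)$-compatible with $\varphi(v_4)$ by recoloring the interior of that branch (a routine application of Theorem~\ref{theoremalgorithmcoloration} once the endpoints are fixed), and (iii) such that $(h_n,\ldots,v_3)$ and $(h_n,\ldots,v_4)$ look to the $H$-reduction exactly like pendant handles of lengths $n_3$ and $n_4$ with the port colors they already have. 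The counting here is short — it amounts to checking that the loss of one color of margin in $|\varphi(v_3)\cap\varphi(v_4)|$ not being extremal leaves exactly enough room — but it is where the lemma's force lies, and I would present it as a small explicit computation on $|C\cap\varphi(v_3)|$ and $|C\cap\varphi(v_4)|$ mirroring the bookkeeping in the proof of Proposition~\ref{proposition2handle}.
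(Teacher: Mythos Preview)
Your case split is correct and matches the paper's: if $\varphi(v_3),\varphi(v_4)$ are $(n_3+n_4+2)$-exactly-compatible, invoke the second hypothesis; otherwise use the reducibility of $H=H(n_1,n_2,n,n_3,n_4)$. But your handling of the slack case has a real gap.

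Your proposed embedding of $H$ inside $H'$ does not work. You take $h_n$ to be the first vertex on the $(n_3+1)$-branch from $h_{n-1}$, and claim it has ``two pendant handles of lengths $n_3$ and $n_4+1$''. It does not: that vertex is an interior vertex of a side handle, hence has degree~$2$ in $G$; it is adjacent only to $h_{n-1}$ and to the next vertex on the same branch, not to the $(n_4+1)$-branch at all. So there is no copy of $H$ sitting inside $H'$ in the way you describe, and the plan to ``fabricate a color-set $C$ for $h_n$'' and then ``invoke reducibility of $H$ with this $C$ in place'' is aimed at a configuration that is not present. (Separately, reducibility of $H$ does not let you \emph{prescribe} the color of the degree-$3$ end-vertex; it only says a valid extension exists once the port colors are given.)

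The paper's argument avoids any geometric embedding. It first observes that in $G-\int(H')$ there is a path of length $n_3+n_4+2$ between $v_3$ and $v_4$, so $\varphi(v_3),\varphi(v_4)$ are $(n_3+n_4+2)$-compatible; if not \emph{exactly} compatible, they are in fact $(n_3+n_4)$-compatible. Then reducibility of $H$ is used \emph{abstractly}: since the port colors $\varphi(v_1),\varphi(v_2)$ are $(n_1+n_2)$-compatible and $\varphi(v_3),\varphi(v_4)$ are $(n_3+n_4)$-compatible, there exists a valid $(a,b)$-coloring $\varphi'$ of the $H$-structure with those port colors. The transplant to $H'$ is a one-line trick you are missing: assign $\varphi'(h'_n)$ to \emph{both} neighbours of $h_{n-1}$ in $H'$ (one on each branch), and copy the remaining colors from $\varphi'$ verbatim. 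This is proper because $\varphi'(h'_n)$ is disjoint from $\varphi'(h'_{n-1})$ and from the first vertices of both side paths in $H$. No computation on $|C\cap\varphi(v_3)|$ or $|C\cap\varphi(v_4)|$ is needed; the ``splitting'' of $h'_n$ into two copies is the whole point.
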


\begin{proof}
Let $n \geq 1$. Let  $H$ be a handle  $H(n_{1},n_{2},n-1,n_{3}+1,n_{4}+1)$ in $G$ and $\varphi$ be a $(a,b)$-coloring of $G - \int(H)$. Assume $H(n_{1},n_{2},n,n_{3},n_{4})$ is $(a,b)$-reducible in $G$, and  $H(n_{1},n_{2},n-1,n_{3}+1,n_{4}+1)$ is $(a,b)$-reducible in $G$ for $\varphi(v_{3}), \varphi(v_{4})$ being $(n_3 + n_4 +2)$-exactly-compatible. If $\varphi(v_{3}), \varphi(v_{4})$ are $(n_3 + n_4 +2)$-exactly-compatible then it is true by hypothesis. Otherwise $\varphi(v_{3}), \varphi(v_{4})$ are $(n_3 + n_4)$-compatible. Then, by hypothesis, there exists a $(a,b)$-coloring $\varphi'$ of $H'=H(n_{1},n_{2},n,n_{3},n_{4})$ with $\varphi'(v'_i)=\varphi(v_i)$, where $v'_i$ is the end-vertex of the path of length $n_i$ of $H'$, $i=1,\ldots,4$. Now we complete the coloring of $G$: 
to each vertex of the paths of $H$ of length $n_1, n_2$, and $n-1$ starting at $h_0$ we associate the color of the corresponding vertex in $H'$; for the two neighbors of $h_{n-1}$ along the paths towards $v_3$ and $v_4$ we assign the color $\varphi'(h'_n)$; for the remaining subpaths of lengths $n_3$ and $n_4$ of $H$, we use the colors of the corresponding paths of the same lengths in $H'$.
We obtain a $(a,b)$-coloring of $H$ that completes the $(a,b)$-coloring $\varphi$ of $G$, hence $H$ is $(a,b)$-reducible in $G$.
\end{proof}

By symmetry and adding the new condition $n_1 < n_4$ in order to fulfill our convention, we have this lemma: 

\begin{lemma}
\label{lemmapassagegauche}
Let $n \geq 1$ and $n_1 < n_4$. If $H(n_{1},n_{2},n,n_{3},n_{4})$ is $(a,b)$-reducible in $G$ and $H(n_{1} + 1 ,n_{2} + 1 ,n-1,n_{3},n_{4})$ is $(a,b)$-reducible in $G$ when the color-sets of $v_{1}, v_{2}$ are $(n_1 + n_2 +2)$-exactly-compatible, then $H(n_{1} + 1 ,n_{2} + 1 ,n-1,n_{3},n_{4})$ is $(a,b)$-reducible in $G$. 
\end{lemma}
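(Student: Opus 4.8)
The plan is to deduce the statement directly from Lemma~\ref{lemmapassagedroite} by a mirror-image argument, so that no new colouring construction is needed. The key observation is that a handle $H(m_1,m_2,m,m_3,m_4)$ in a graph $G$ is, as a subgraph of $G$ together with its embedding, the very same object as the handle $H(m_4,m_3,m,m_2,m_1)$ read from the opposite end and with the two ports at each end interchanged. Since $(a,b)$-reducibility depends only on how the subgraph sits inside $G$, these two handles are $(a,b)$-reducible in $G$ simultaneously; moreover the compatibility index between the two ports at a fixed end, being the length of the induced path joining them through the incident branch vertex, is preserved, so the ``$k$-exactly-compatible'' side hypotheses translate as well.

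I would then apply Lemma~\ref{lemmapassagedroite} to the handle $\widetilde H=H(n_4,n_3,n,n_2,n_1)$, which here plays the role of its ``$H(n_1,n_2,n,n_3,n_4)$''. Its $h_n$-end ports $\widetilde v_3,\widetilde v_4$ are the vertices $v_2,v_1$ of the present handle, and $\widetilde n_3+\widetilde n_4+2=n_2+n_1+2$. Under this dictionary the two hypotheses required here --- that $H(n_1,n_2,n,n_3,n_4)$ is $(a,b)$-reducible in $G$, and that $H(n_1+1,n_2+1,n-1,n_3,n_4)$ is $(a,b)$-reducible in $G$ whenever $\varphi(v_1),\varphi(v_2)$ are $(n_1+n_2+2)$-exactly-compatible --- are precisely the two hypotheses of Lemma~\ref{lemmapassagedroite} for $\widetilde H$. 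That lemma then yields that $H(\widetilde n_1,\widetilde n_2,\widetilde n-1,\widetilde n_3+1,\widetilde n_4+1)=H(n_4,n_3,n-1,n_2+1,n_1+1)$ is $(a,b)$-reducible in $G$, and undoing the relabelling this is exactly the conclusion that $H(n_1+1,n_2+1,n-1,n_3,n_4)$ is $(a,b)$-reducible in $G$. Equivalently, one may rerun the proof of Lemma~\ref{lemmapassagedroite} with ``left'' and ``right'' exchanged: when $\varphi(v_1),\varphi(v_2)$ are not $(n_1+n_2+2)$-exactly-compatible one is in the case where they are $(n_1+n_2)$-compatible, which is exactly the compatibility of the two ports joined through $h_0$ in $H(n_1,n_2,n,n_3,n_4)$; one invokes the reducibility of that handle to obtain a colouring $\varphi'$ of $H'=H(n_1,n_2,n,n_3,n_4)$, and transplants $\varphi'$ onto $H(n_1+1,n_2+1,n-1,n_3,n_4)$ by matching its length-$n_3$, length-$n_4$ and length-$(n-1)$ subpaths with the corresponding subpaths of $H'$ and duplicating the colour $\varphi'(h'_0)$ on the two neighbours of $h_0$ heading towards $v_1$ and $v_2$.

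There is no real obstacle; the only delicate points are bookkeeping and the role of the extra hypotheses. The condition $n_1<n_4$, i.e.\ $n_1+1\le n_4\le n_3$, is imposed solely so that the output handle $H(n_1+1,n_2+1,n-1,n_3,n_4)$ still obeys the ordering convention fixed for H-handles (smallest parameter at the $h_0$-end), and it is not used in the argument; likewise $n\ge1$ only guarantees $n-1\ge0$, the case $n=1$ being the still valid degenerate one where the central path of the smaller handle is a single vertex. The one thing to check with care is that interchanging the two ends sends the compatibility index $n_3+n_4+2$ appearing in Lemma~\ref{lemmapassagedroite} to the index $n_1+n_2+2$ required here, which is immediate once the ports $v_1,v_2,v_3,v_4$ are tracked through the relabelling.
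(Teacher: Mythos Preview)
Your proposal is correct and follows exactly the approach the paper takes: the paper's ``proof'' of this lemma is in fact the single sentence ``By symmetry and adding the new condition $n_1 < n_4$ in order to fulfill our convention, we have this lemma,'' and you have simply written out that symmetry argument in detail, including the observation that $n_1<n_4$ is only a normalisation condition. Your alternative phrasing (rerunning the proof of Lemma~\ref{lemmapassagedroite} with left and right exchanged) is likewise just an explicit version of the same symmetry.
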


\bigskip

\begin{proposition}
\label{propositionTypeH(2,2,2b-3,2,2)}
For any graph $G$, the H-handles $H(2,2,2b-3,2,2)$,  $H(1,2,2b-3,3,2)$ and $H(1,4,2b-3,2,2)$ are the smallest $(2b+1,b)$-reducible H-handles $H(n_1,n_2,n,n_3,n_4)$ with $n=2b-3$ in $G$.
\end{proposition}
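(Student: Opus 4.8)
Throughout put $e=1$, so $a=2b+1$, and set $n=2b-3$, which is odd; use the notation of Figure~\ref{fig:nota}, with $v_1,v_2$ the ports reached from $h_0$ through the handles of length $n_1\le n_2$ and $v_3,v_4$ those reached from $h_n$ through the handles of length $n_4\le n_3$. Given a $(2b+1,b)$-coloring $\varphi$ of $G-\int(H)$, the strategy for reducibility is to choose new good sets $C_0,C_n$ for $h_0,h_n$ and recolour the degree-$2$ vertices of $H$ accordingly. By Theorem~\ref{theoremalgorithmcoloration}, such a recolouring extends $\varphi$ to $G$ as soon as $C_0$ is $n_i$-compatible with $\varphi(v_i)$ for $i=1,2$, $C_n$ is $n_i$-compatible with $\varphi(v_i)$ for $i=3,4$, and $C_0,C_n$ are $n$-compatible, the last condition reading $|C_0\cap C_n|\le b-2$ because $n$ is odd. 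So the whole problem reduces to: among the admissible sets $C_0$ and the admissible sets $C_n$, find one pair with $|C_0\cap C_n|\le b-2$.

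For a handle-pair $(2,2)$ at an end (both ends of $H(2,2,2b-3,2,2)$, and $h_n$ for $H(1,4,2b-3,2,2)$), the five-vertex subpath $v_1,\dots,h_0,\dots,v_2$ carries a $(2b+1,b)$-coloring, so Lemma~\ref{lemmaP4} supplies four admissible values $X_1,\dots,X_4$ for $C_0$ with the precise intersection pattern $|X_1\cap X_2|=|X_1\cap X_3|=b-1$, $|X_1\cap X_4|=b-2$, and likewise four values $Y_1,\dots,Y_4$ for $C_n$. For a handle-pair $(1,n_2)$ at $h_0$ (the cases $(1,2)$ and $(1,4)$), the edge $h_0v_1$ forces $C_0\cap\varphi(v_1)=\emptyset$, while the second handle forces $C_0$ to be $n_2$-compatible with $\varphi(v_2)$; listing the good subsets of $\{1,\dots,a\}-\varphi(v_1)$ having the required overlap with $\varphi(v_2)$ produces the admissible $C_0$'s explicitly, together with their mutual intersections. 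A short case analysis over the (at most sixteen) pairs then locates one with intersection $\le b-2$. For $H(1,2,2b-3,3,2)$ there is a shortcut: it is obtained from the reducible handle $H(1,2,2b-2,2,1)$ (Proposition~\ref{proposition2handle}) by one application of Lemma~\ref{lemmapassagedroite}, so it is enough to treat the single extra case in which $\varphi(v_3),\varphi(v_4)$ are $5$-exactly-compatible, i.e.\ $|\varphi(v_3)\cap\varphi(v_4)|=2$, where the admissible $C_n$'s are pinned down tightly. The delicate point throughout is the parity: the crude estimate only gives $|C_0\cap C_n|\le b-1$, whereas the odd-length central path needs $b-2$, so one genuinely has to exploit that there are several admissible sets on at least one side and use their precise structure; I expect this to be the main technical obstacle.

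For the minimality (``smallest'') statement one must check that lowering any parameter of one of the three listed handles destroys reducibility, which I would do by exhibiting, for each such strictly smaller H-handle, a graph $G$ together with a coloring of $G-\int(H)$ that does not extend. Two mechanisms dispose of the cases. First, if a handle of length $1$ sits at \emph{each} end, identifying the two corresponding ports turns the configuration into an induced cycle of length $1+(2b-3)+1=2b-1$; since $\tfrac{2b+1}{b}<\tfrac{2b-1}{b-1}$, the cycle $C_{2b-1}$ is not $(2b+1,b)$-colorable, so $G-\int(H)$ (a forest) is colorable while $G$ is not, and the handle is not reducible. Second, for the remaining strictly smaller handles one picks port colours $\varphi(v_1),\dots,\varphi(v_4)$ adversarially, forcing the (few) admissible $C_0$'s at $h_0$ and the admissible $C_n$'s at $h_n$ to cluster within Hamming distance $2$ of a common good set so that every pair satisfies $|C_0\cap C_n|\ge b-1$, which blocks the central path; verifying that such an adversarial configuration exists (and is realised by an honest coloring of some $G-\int(H)$) is where the bookkeeping is heaviest.

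Finally I would note that the recolouring used in the reducibility part touches only $h_0,h_n$ and the interiors of the five handles, whose only neighbours outside $\int(H)$ are the fixed ports; hence the argument is insensitive to the rest of $G$, which is what makes the quantifier ``for any graph $G$'' unproblematic once the purely set-theoretic matching lemma above is in place.
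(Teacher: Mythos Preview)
Your proposal is essentially correct and follows the same overall architecture as the paper: for each of the three handles, produce a list of admissible good sets for $h_0$ and for $h_n$, then argue that some pair has intersection at most $b-2$ so that Theorem~\ref{theoremalgorithmcoloration} finishes the job. For $H(2,2,2b-3,2,2)$ and $H(1,4,2b-3,2,2)$ your use of Lemma~\ref{lemmaP4} on the $(2,2)$ sides matches the paper exactly; on the $(1,4)$ side the paper invokes Lemma~\ref{lemmaP2n+1un} (with $n=2$) rather than raw enumeration, which is really the same computation packaged as a lemma.

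The one genuine divergence is your treatment of $H(1,2,2b-3,3,2)$. You propose to reduce via Lemma~\ref{lemmapassagedroite} from the already established $H(1,2,2b-2,2,1)$ of Proposition~\ref{proposition2handle}, leaving only the boundary case where $\varphi(v_3),\varphi(v_4)$ are $5$-exactly-compatible. The paper instead attacks this handle directly: it applies Lemma~\ref{lemmaP5} to the length-$5$ path $v_4\text{--}h_n\text{--}v_3$ to get a $2\times 2$ grid of candidates $I\cup Y_i\cup Z_j$ for $C_n$, and Lemma~\ref{lemmaP2n+1un} to the length-$3$ path at $h_0$ to get two candidates for $C_0$, then eliminates the bad overlap pattern. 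Your route is conceptually shorter (one sharp case instead of a full analysis) but you still owe the reader the enumeration of admissible $C_n$'s in the exactly-compatible case and the verification that some $C_0$ meets one of them in at most $b-2$ points; the paper's route is more uniform with its treatment of the other two handles.

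For minimality, your cycle construction (identify $v_1$ and $v_4$ when $n_1=n_4=1$ to create an induced $C_{2b-1}$) is a valid and pleasant alternative to the paper's path-length argument $n_1+n_4\ge 3$, yielding the same exclusion. For the remaining strictly smaller handles the paper does not argue abstractly but simply exhibits an explicit counterexample for $H(1,3,2b-3,2,2)$ (namely $C(v_1)=\{1,\dots,b\}$, $C(v_2)=\{1,\dots,b-2,2b,2b+1\}$, $C(v_4)=\{b+1,\dots,2b-1,2b+1\}$, $C(v_3)=\{b+1,\dots,2b\}$), which by monotonicity also rules out $H(1,2,2b-3,2,2)$. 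Your ``adversarial port colours'' paragraph is the right idea but stops short of writing down such a witness; this is the one place where your plan is materially less complete than the paper's proof.
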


\begin{proof}
For proving minimality, we have the necessary condition $n_{1}  + n_{4} \geq 2b-(2b-3)=3$. Moreover, we have $n_4 < 3$. If $n_1=1$ then $n_3 \geq n_4=2$. Therefore $H(1,2,2b-3,2,2)$ could be the smallest $(2b+1,b)$-reducible handle. However there exists a counter example even for $H(1,3,2b-3,2,2)$: $C(v_{1})=\{1,\ldots,b\}$, $C(v_{2})=\{1,\ldots,b-2\} \cup \{2b,2b+1\}$, $C(v_{4})=\{b+1,\ldots,2b-1\} \cup \{2b+1\}$, $C(v_{3})=\{b+1,\ldots,2b\}$. Therefore, there remain only three configurations to be tested:  $H(1,2,2b-3,3,2)$, $H(1,4,2b-3,2,2)$, and $H(2,2,2b-3,2,2)$.
We now consider each of these three H-handles in turn and prove that it is $(2b+1,b)$-reducible. Let $n=2b-3$.

\paragraph{$H=H(2,2,2b-3,2,2)$.} Let $\varphi$ be a $(2b+1,b)$-coloring of $G-\int(H)$.
 We are going to show that there exist two $(2b-3)$-compatible sets $C$ and $C'$ that can be given to vertices $h_0$ and $h_n$. By Lemma \ref{lemmaP4} there exist four sets $X'_i, i\in \{1,2,3,4\}$ for the vertex $h_n$ with $\vert X'_1 \cap X'_4 \vert =b- 2$. Let $I=\varphi(v_1)\cap \varphi(v_2)$ and $I'=\varphi(v_3)\cap \varphi(v_4)$.
By hypothesis, both $\varphi(v_1)$ and $\varphi(v_2)$ and $\varphi(v_3)$ and $\varphi(v_4)$ are $4$-compatible. Hence the size of both $I$ and $I'$ is between $b-2$ and $b$. We then consider three cases depending on the values of $|I|$ and $|I'|$.
\begin{description}
\item[Case 1.] $|I|=b$ or $|I'|=b$. Assume without loss of generality that $|I|=b$.  Then either there exists $i\in\{1,4\}$ such that $|I\cap X'_i|\le b-2$ and thus we can set $C=I$ and $C'=X'_i$ or we have $|I\cap X'_i|\ge  b-1$ for all $i\in\{1,2,3,4\}$. In this case, we can set $C=X'_1$ and $C'=X'_4$.

\item[Case 2.] $|I|=b-1$ or $|I'|=b-1$. Assume without loss of generality that $|I|=b-1$. Since $|X_1\cap X_4|=b-2$, then there exists $i\in\{1,4\}$ such that $|I\cap X'_i|\le b-2$ and thus we can set $C=I\cup \{x\}$ and $C'=X'_i$, with $x\in \{1,\ldots, a\}-I - X'_i$.

\item[Case 3.] $|I|=|I'|=b-2$. If there exist $x\in \varphi(v_1)-I$, $y\in \varphi(v_2)-I$ and $i\in\{1,4\}$ such that $|(I\cup\{x,y\})\cap X'_i|\le b-2$ then we can set $C=I\cup\{x,y\}$ and $C'=X'_i$. Otherwise $|(I\cup\{x,y\})\cap X'_i|=b-1$, then $X'_1=\varphi(v_1)$ and $X'_4=\varphi(v_2)$ then $\varphi(v_1)=I\cup\{x,x'\}, \varphi(v_2)=I\cup\{y,y'\}, \varphi(v_3)=I\cup\{x,y\}$ and $\varphi(v_4)=I\cup\{x',y'\}$. In this case we can choose $C=I\cup \{x,y'\}$ and $C'=I\cup \{x',y\}$.
\end{description}

\paragraph{$H=H(1,2,2b-3,3,2)$.} Let $\varphi$ be a $(2b+1,b)$-coloring of $G-\int(H)$.
If $\vert \varphi(h_0)\cap \varphi(h_n) \vert \leq b-2$, then $\varphi(h_0)$ and $\varphi(h_n)$ are $(2b-3)$-compatible, therefore by Theorem \ref{theoremalgorithmcoloration} we conclude. Else $ \vert \varphi(h_0)\cap \varphi(h_n) \vert > b-2$. By Lemma \ref{lemmaP5} there exists $I,Y$ and $Z$ such that $\vert I \vert = b-2$ and $\vert Y \vert = \vert Z \vert = 2$. Let $Y=Y_1 \cup Y_2$ and $Z=Z_1 \cup Z_2$, with $|Y_i|=|Z_i|=1$, $i=1,2$. If there exists $i,j \in \{1,2\}$ such that $\vert \varphi(h_0)\cap ( I \cup Y_i \cup Z_j ) \vert \leq b-2$, then we conclude. Otherwise for any $i,j \in \{1,2\}$, $\vert \varphi(h_0)\cap ( I \cup Y_i \cup Z_j ) \vert >  b-2$, then without loss of generality, $\varphi(h_0)=I \cup Y$. But Lemma \ref{lemmaP2n+1un} allows to construct another coloring $\varphi'$ with $\vert \varphi'(h_0)\cap \varphi(h_0)\vert = b-1 $ and thus $I \cup Z \not= \varphi'(h_0)\not= I \cup Y$. 
Hence there exists $i,j \in \{1,2\}$ such that $\vert \varphi'(h_0)\cap ( I \cup Y_i \cup Z_j ) \vert \leq b-2$, allowing to conclude.

\paragraph{$H=H(1,4,2b-3,2,2)$.} Let $\varphi$ be a $(2b+1,b)$-coloring of $G-\int(H)$. Then $\varphi(v_1)$ and $\varphi(v_2)$ are $5$-compatible and $\varphi(v_3)$ and $\varphi(v_4)$ are $4$-compatible. If $\vert \varphi(h_0)\cap \varphi(h_n) \vert \leq b-2$, then $\varphi(h_0)$ and $\varphi(h_n)$ are $(2b-3)$-compatible, therefore by Theorem \ref{theoremalgorithmcoloration} we conclude. Else $ \vert \varphi(h_0)\cap \varphi(h_n) \vert > b-2$. By Lemma \ref{lemmaP4} we have  4 good sets $X'_1$,  $X'_2$,  $X'_3$  and  $X'_4$ for the vertex $h_n$, and by Lemma \ref{lemmaP2n+1un}, there exists a set $X$ such that $|X|=b-2$ and a set $Y=\{y_1,y_2,y_3\}$ such that the three good sets $C_1=X \cup \{y_1,y_2\}$, $C_2=X \cup \{y_1,y_3\}$ and $C_3=X \cup \{y_2,y_3\}$ can be given to vertex $h_0$. If there exists $i,j \in \{1,2,3,4\}$ with $i \ne 4$, such that  $\vert X'_j \cap C_i \vert \leq b-2$, then we conclude. Else $\forall i,j \in \{1,2,3,4\}$  with $i \not= 4$, we have $ b \geq \vert X'_j \cap C_i \vert > b-2$, and since $\vert X'_1 \cap X'_4 \vert =b- 2$, then for all $j \in \{1,4\}$ and for all $i \in \{1,2,3\}$ we have the formula $\vert X'_j \cap C_i \vert =  b-1$, so  $X'_1 \cap X'_4 \subset C_i \subset X'_1 \cup X'_4$. Without loss of generality, $y_1 \in X'_1$, thus by the formula with $i=1,j=1$ we have $y_2 \notin X'_1$. Therefore by the formula with $i=3,j=1$ we have $y_3 \in X'_1$, a contradiction with the formula with $i=2,j=1$. 
\end{proof}




\begin{proposition}
\label{propositionTypeH(2,3,2b-4,3,2)}
For any graph $G$, the H-handles $H(1,2,2b-4,4,3)$, and $H(1,4,2b-4,3,3)$ are the smallest $(2b+1,b)$-reducible H-handles $H(n_1,n_2,n,n_3,n_4)$ with $n_1=1$ and $n=2b-4$ in $G$.
\end{proposition}

\begin{proof}
For proving minimality, we have the necessary condition $n_{1}  + n_{4} \geq 2b-(2b-4)=4$. 
Hence, as $n_1=1$, then $n_3 \geq n_4=3$. If $n_2\le 3$ then we can construct a simple counter example for $H(1,3,2b-4,3,3)$: $C(v_{1})=\{1,\ldots,b\}$, $C(v_{2})=\{3,\ldots,b+2\}$, $C(v_{4})=\{b+3,\ldots,2b+1\} \cup \{b+1\}$, $C(v_{3})=\{b+2,\ldots,2b+1\}$. 
Therefore there remain two possibilities in total: $H(1,2,2b-4,4,3)$ and $H(1,4,2b-4,3,3)$. 
We consider each of these two H-handles in turn and prove that it is $(2b+1,b)$-reducible in $G$.


\paragraph{$H=H(1,2,2b-4,4,3)$.} Let $\varphi$ be a $(2b+1,b)$-coloring of $G-\int(H)$. We note $C_0=\varphi(v_{1})$, $C_3=\varphi(v_{2})$, $C'_0=\varphi(v_{4})$ and $C'_7=\varphi(v_{3})$, then $C_0,C_3$ are $3$-compatible, and $C'_0,C'_7$ are 7-compatible. By Proposition  \ref{propositionTypeH(2,2,2b-3,2,2)}, $H(1,2,2b-3,3,2)$ is $(2b+1,b)$-reducible, and by Lemma \ref{lemmapassagedroite}, there only remains to prove that $H$ is reducible in the case $C'_0,C'_7$ are $7$-exactly-compatible.
We are going to show that there exist two sets of $b$ colors $C_1$ and $C'_3$ that can be given to vertices $h_0$ and $h_n$, respectively.
We note $C'_0\cap C'_7=Y'_1 \cup Y'_2 \cup Y'_3  $, $I'=C'_7 - C'_0$, $I''=\{1,\ldots,2b+1\}-C'_7-C'_0=Z'_1 \cup Z'_2 \cup Z'_3 \cup Z'_4 $, $D'_1=I' \cup Y'_1 \cup Z'_1 \cup Z'_2 $, $D'_2=I' \cup Y'_2 \cup Z'_3 \cup Z'_4 $, $D'_3=I' \cup Y'_3 \cup Z'_1 \cup Z'_4 $ and $D'_4=I' \cup Y'_2 \cup Z'_2 \cup Z'_3$. 
By Lemma \ref{lemmaP2n+1un}  there exist two (distinct) good sets $D_1$ and $D_2$, for $C_1$. If there exists $i \in \{1,2\}$ and $j \in \{1,2,3,4\}$ such that $\vert D'_j \cap D_i \vert \geq 2$, then we set $C_1=D_i$ and  $C'_3=D'_j$.  Else  for any $i \in \{1,2\}$ and $j \in \{1,2,3,4\}$ we have $\vert D'_j \cap D_i \vert \leq 1$. However $\vert (D'_1 \cup D'_2 \cup D'_3) \cap D_i \vert  \geq b+4+b-a=3$ and $\vert (D'_1 \cup D'_4 \cup D'_3) \cap D_i \vert  \geq 3$. Thus  $\vert D'_j \cap D_i \vert = 1$  and $\vert (D'_1 \cup D'_2 \cup D'_3) \cap D_i \vert  =\vert (D'_1 \cup D'_4 \cup D'_3) \cap D_i \vert  =3$. Then for any $i=1,2$, $C'_0\cap C'_7 \subset D_i$ and $D_i \cap  (I' \cup I'')= \emptyset$, and therefore $D_i=C'_0$, a contradiction with $D_1 \not= D_2$.

\paragraph{$H=H(1,4,2b-4,3,3)$.} Let $\varphi$ be a $(2b+1,b)$-coloring of $G-\int(H)$, we note $C_0=\varphi(v_{1})$, $C_5=\varphi(v_{2})$, $C'_0=\varphi(v_{4})$ and $C'_6=\varphi(v_{n})$, then $C_0,C_5$ are 5-compatible and $C'_0,C'_6$ are 6-compatible. By Proposition  \ref{propositionTypeH(2,2,2b-3,2,2)}, $H(1,4,2b-3,2,2)$ is $(2b+1,b)$-reducible and, by Lemma \ref{lemmapassagedroite}, there only remains to prove the reduction for the case $C'_0,C'_6$ are 6-exactly-compatible. 
We are going to show that there exist two sets of $b$ colors $C_1$ and $C'_3$ that can be given to vertices $h_0$ and $h_n$, respectively.
We note $C'_0-C'_6=Y'_1 \cup Y'_2 \cup Y'_3  $, $C'_6-C'_0=Z'_1 \cup Z'_2 \cup Z'_3  $, $I'=\{1,\ldots,2b+1\}-C'_6-C'_0$, $D'_1=I' \cup Y'_1 \cup Z'_1 $, $D'_2=I' \cup Y'_2 \cup Z'_2 $ and $D'_3=I' \cup Y'_3 \cup Z'_3$. By Lemma \ref{lemmaP2n+1un}  there exist $I$ such that $\vert I \vert = b-2$, and $X=X_1 \cup X_2 \cup X_3$ such that $D_1=I \cup X_1 \cup X_2 $, $D_2=I \cup X_1 \cup X_3 $ and $D_3=I \cup X_2 \cup X_3 $. For any $i,j \in \{1,2,3\}$, if $\vert D'_j \cap D_i \vert \geq 2$, then we set $C_1=D_i$ and  $C'_3=D'_j$.  Else  for any $i,j \in \{1,2,3\}$  we have $\vert D'_j \cap D_i \vert \leq 1$. However $\vert (D'_1 \cup D'_2 \cup D'_3) \cap D_i  \vert  \geq b+3+b-a=2$. Thus $I \cap I'=\emptyset$ and if $X_1 \subset I'$ then $(I \cup X_2 \cup X_3) \cap (D'_1 \cup D'_2 \cup D'_3)=\emptyset $, a contradiction. Proceeding similarly for $X_2, X_3$, we are in the case that $X \cap I'=\emptyset$ and thus $\vert (I \cup X) \cap ((C'_0-C'_6) \cup (C'_6-C'_0)) \vert \geq b+1+6+b-2-a=4$. Hence, without lost of generality, $(C'_0-C'_6) \cup Z'_1 \subset I \cup X$, a contradiction with $\vert D'_1 \cap D_i \vert \leq 1$.
\end{proof}

\appendix
\label{app}

\section{Other reducible $(9,4)$-configurations}


\begin{figure}[H]
\centering
\def\ax{0}\def\ay{25}
\def\bx{10}\def\by{25}
\def\cx{20}\def\cy{25}
\begin{tikzpicture}[scale=0.5]
\node at (\ax+2,\ay) [circle,draw=black,scale=0.8](t1){};
\node at (\ax+5,\ay) [circle,draw=black,scale=0.8](t2){};
\node at (\ax+.2,\ay+1.2) [circle,draw=black,fill=black,scale=0.5](h1){};
\node at (\ax+.2,\ay-1.2) [circle,draw=black,fill=black,scale=0.5](h2){};
\node at (\ax-1,\ay-1.8) [circle,draw=black,fill=black,scale=0.5](h21){};
\node at (\ax-1,\ay-.6) [circle,draw=black,fill=black,scale=0.5](h22){};
\draw  (t1) -- (h1) node[midway, above] {$b$};
\draw  (h2)-- (t1) node[midway, below] {$c$};
\draw [ultra thick] (t1) -- (t2) node[midway, above] {$a$};
\draw  (h2) -- (h21) node[midway, below] {$d$};
\draw  (h2) -- (h22) node[midway, above] {$e$};
\node at (\ax+2,\ay-3.4) (){$S'(a,b,c,d,e)$};

\node at (\bx+2,\by) [circle,draw=black,scale=0.8](t1){};
\node at (\bx+5,\by) [circle,draw=black,scale=0.8](t2){};
\node at (\bx+.2,\by+1.4) [circle,draw=black,fill=black,scale=0.5](h1){};
\node at (\bx+.2,\by-1.4) [circle,draw=black,fill=black,scale=0.5](h2){};
\node at (\bx-1,\by+.8) [circle,draw=black,fill=black,scale=0.5](h11){};
\node at (\bx-1,\by+2) [circle,draw=black,fill=black,scale=0.5](h12){};
\node at (\bx-1,\by-2) [circle,draw=black,fill=black,scale=0.5](h21){};
\node at (\bx-1,\by-.8) [circle,draw=black,fill=black,scale=0.5](h22){};
\draw  (t1) -- (h1) node[midway, above] {$b$};
\draw  (h2)-- (t1) node[midway, below] {$c$};
\draw [ultra thick] (t1) -- (t2) node[midway, above] {$a$};
\draw  (h1) -- (h11) node[midway, below] {$d$};
\draw  (h1) -- (h12) node[midway, above] {$e$};
\draw  (h2) -- (h21) node[midway, below] {$f$};
\draw  (h2) -- (h22) node[midway, above] {$g$};
\node at (\bx+3,\by-3.4) (){$S''(a,b,c,d,e,f,g)$};

\node at (\cx+2,\cy) [circle,draw=black,scale=0.8](t1){};
\node at (\cx+5,\cy) [circle,draw=black,scale=0.8](t2){};
\node at (\cx+.2,\cy+1.4) [circle,draw=black,fill=black,scale=0.5](h1){};
\node at (\cx+.2,\cy-1.4) [circle,draw=black,fill=black,scale=0.5](h2){};
\node at (\cx+6.8,\cy+1.4) [circle,draw=black,fill=black,scale=0.5](h3){};
\node at (\cx+6.8,\cy-1.4) [circle,draw=black,fill=black,scale=0.5](h4){};
\node at (\cx+8,\cy+.8) [circle,draw=black,fill=black,scale=0.5](h11){};
\node at (\cx+8,\cy+2) [circle,draw=black,fill=black,scale=0.5](h12){};
\node at (\cx+8,\cy-2) [circle,draw=black,fill=black,scale=0.5](h21){};
\node at (\cx+8,\cy-.8) [circle,draw=black,fill=black,scale=0.5](h22){};
\draw  (t1) -- (h1) node[midway, above] {$a$};
\draw  (h2)-- (t1) node[midway, below] {$b$};
\draw  (t2) -- (h3) node[midway, above] {$d$};
\draw  (h4)-- (t2) node[midway, below] {$e$};
\draw [ultra thick] (t1) -- (t2) node[midway, above] {$c$};
\draw  (h3) -- (h11) node[midway, below] {$f$};
\draw  (h3) -- (h12) node[midway, above] {$g$};
\draw  (h4) -- (h21) node[midway, below] {$h$};
\draw  (h4) -- (h22) node[midway, above] {$i$};
\node at (\cx+4,\cy-3.4) (){$H''(a,b,c,d,e,f,g,h,i)$};
\end{tikzpicture}

\medskip
\begin{tabular}{ ccccc }
\hline
	$a$ & $b$ & $c$ & $d$ & e\\ \hline
	7 & 1 & 1 & 3 & 6\\
	7 & 1 & 1 & 4 & 5\\
	6 & 3 & 1 & 3 & 3\\
	6 & 3 & 1 & 2 & 4\\
	6 & 2 & 1 & 3 & 4\\
	6 & 2 & 1 & 2 & 5\\
	5 & 4 & 2 & 2 & 2\\
	5 & 4 & 1 & 3 & 3\\
	5 & 4 & 1 & 2 & 4\\
	5 & 3 & 2 & 3 & 2\\
	5 & 3 & 1 & 4 & 4\\
	4 & 4 & 3 & 2 & 2\\
	4 & 4 & 3 & 1 & 4\\
	4 & 4 & 2 & 2 & 3\\
	4 & 4 & 1 & 3 & 4\\
\end{tabular}\hspace*{1.2cm}
\begin{tabular}{ ccccccc }
\hline
	$a$ & $b$ & $c$ & $d$ & $e$ & $f$ & $g$\\ \hline
	7 & 1 & 1 & 3 & 4 & 3 & 4\\
	7 & 1 & 1 & 3 & 3 & 4 & 4\\
	7 & 1 & 1 & 3 & 3 & 3 & 5\\
	6 & 2 & 1 & 2 & 2 & 3 & 3\\
	6 & 2 & 1 & 3 & 3 & 2 & 3\\
	6 & 1 & 1 & 2 & 4 & 3 & 4\\
	6 & 1 & 1 & 3 & 3 & 4 & 4\\
	6 & 1 & 1 & 2 & 5 & 2 & 5\\	
	5 & 3 & 2 & 2 & 2 & 2 & 2\\
	5 & 2 & 2 & 2 & 3 & 2 & 3\\
	5 & 2 & 2 & 3 & 3 & 2 & 2\\
	5 & 2 & 2 & 2 & 4 & 2 & 2\\
	5 & 2 & 1 & 2 & 3 & 3 & 4\\
	5 & 2 & 1 & 3 & 3 & 3 & 3\\
	5 & 1 & 1 & 3 & 3 & 4 & 4\\
	5 & 1 & 1 & 3 & 4 & 3 & 4\\
	4 & 4 & 2 & 1 & 2 & 2 & 2\\
	4 & 4 & 1 & 2 & 2 & 3 & 3\\
	4 & 4 & 1 & 1 & 4 & 3 & 3\\
	4 & 3 & 3 & 1 & 2 & 2 & 2\\
	4 & 3 & 2 & 1 & 2 & 3 & 3\\
	4 & 3 & 1 & 2 & 2 & 3 & 4\\
	4 & 2 & 2 & 2 & 3 & 2 & 3\\
	3 & 3 & 3 & 2 & 3 & 2 & 3\\
	
\end{tabular}\hspace*{1.2cm}
\begin{tabular}{ ccccccccc }
\hline
	$a$ & $b$ & $c$ & $d$ & $e$ & $f$ & $g$ & $h$ & $i$\\ \hline
 	1 & 2 & 6 & 1 & 1 & 1 & 1 & 5 & 4\\
 	1 &  2 &  6 &  1 &  1 &  2 &  3 & 4 & 4\\
	2 &  2 &  5 &  1 &  1 &  2 &  3 &  4 & 4\\
	1 &  2 &  5 &  2 &  2 &  1 &  1 &  2 & 1\\
	1 &  2 &  5 &  4 &  1 &  2 &  2 &  2 & 2\\
	1 &  2 &  5 &  3 &  1 &  2 &  3 &  2 & 3\\
	1 &  2 &  5 &  2 &  1 &  1 &  3 &  4 & 4\\
	1 &  2 &  4 &  4 &  2 &  1 &  1 &  2 & 2\\
	1 &  2 &  4 &  4 &  2 &  2 &  2 &  2 & 1\\
	1 &  2 &  4 &  2 &  2 &  2 &  3 &  2 & 3\\
	2 &  2 &  4 &  4 &  1 &  2 &  2 &  3 & 2\\
	2 &  4 &  4 &  3 &  1 &  2 &  2 &  3 & 2\\
	2 &  4 &  4 &  2 &  1 &  2 &  2 &  3 & 3\\
	2 &  2 &  4 &  2 &  2 &  2 &  2 &  3 & 2\\
	3 &  3 &  2 &  3 &  3 &  1 &  2 &  2 & 2\\
\end{tabular}
\caption{\label{tbSp}$\mF''_{9,4}$: a new set of $(9,4)$-reducible configurations of type $S'(a,b,c,d,e)$ on the left, $S''(a,b,c,d,e,f,g)$ (middle), and $H''(a,b,c,d,e,f,g,h,i)$ (right).}
\end{figure}

\end{document}